\documentclass[10pt]{amsart}
\usepackage[utf8]{inputenc}
\usepackage{colonequals}
\usepackage[T1]{fontenc}
\usepackage{textcomp} 
\usepackage{mathtools}
\usepackage{array}
\usepackage{amsmath,amssymb,amsthm, graphics, comment,bm}
\usepackage{diagmac2}
\usepackage{textcomp}
\usepackage{color}
\usepackage{xcolor}
\usepackage[alphabetic]{amsrefs}
\usepackage[all,cmtip,color,matrix,arrow]{xy}
\usepackage{yfonts}
\usepackage{tikz}
\usepackage{tikz-cd}
\usepackage{stmaryrd}
\usepackage{mathrsfs}
\usepackage{enumerate}
\usepackage{enumitem}
\usepackage{hyperref}
\hypersetup{
	colorlinks=true,
	linkcolor=blue,
	filecolor=magenta,      
	urlcolor=cyan,
}
\usepackage[all,cmtip]{xy}

\calclayout

\newcommand\spec{\operatorname{Spec}}

\newcommand\G{\mathbb{G}}
\newcommand\Ga{\mathbb{G}_a}
\newcommand\Gm{\mathbb{G}_m}

\newcommand\Proj{\text{Proj}}
\newcommand{\ra}{\rightarrow}

\newcommand\A{\mathbb{A}}
\newcommand\Z{\mathbb{Z}}

\newcommand\tX{\widetilde{X}}

\newcommand\tY{\widetilde{Y}}

\newcommand{\bA}{\mathbb{A}}

\newcommand{\bL}{\mathbb{L}}
\newcommand{\bN}{\mathbb{N}}
\newcommand{\bP}{\mathbb{P}}

\newcommand{\bT}{\mathbb{T}}
\newcommand{\bZ}{\mathbb{Z}}

\newcommand{\sC}{\mathscr{C}}

\newcommand{\sE}{\mathscr{E}}
\newcommand{\sF}{\mathscr{F}}

\newcommand{\oH}{\operatorname{H}}
\newcommand{\sN}{\mathscr{N}}
\newcommand{\sO}{\mathscr{O}}
\newcommand{\sP}{\mathscr{P}}
\newcommand{\sQ}{\mathscr{Q}}

\newcommand{\cC}{\mathcal{C}}
\newcommand{\cD}{\mathcal{D}}
\newcommand{\cE}{\mathcal{E}}

\newcommand{\cM}{\mathcal{M}}

\newcommand{\cO}{\mathcal{O}}
\newcommand{\cP}{\mathcal{P}}

\newcommand{\cX}{\mathcal{X}}
\newcommand{\cY}{\mathcal{Y}}
\newcommand{\cZ}{\mathcal{Z}}

\newcommand{\tcY}{\widetilde{\mathcal{Y}}}
\newcommand{\tcX}{\widetilde{\mathcal{X}}}

\newcommand{\Sym}{\operatorname{Sym}}

\newcommand{\Gr}{\operatorname{Gr}}

\newcommand{\GL}{\operatorname{GL}}

\newcommand{\im}{\operatorname{im}}
\newcommand{\Id}{\operatorname{Id}}

\newcommand{\Kring}{K^{\circ}}
\newcommand{\Kgroup}{K_{\circ}}
\newcommand{\opK}{\mathrm{op}K^{\circ}}

\usepackage{cleveref}

\newtheorem{theorem}{Theorem}[section]
\newtheorem{Thm}[theorem]{Theorem}
\newtheorem*{Thm*}{Theorem}
\newtheorem{Lemma}[theorem]{Lemma}
\newtheorem{Corollary}[theorem]{Corollary}

\newtheorem{Cor}[theorem]{Corollary}

\newtheorem{Prop}[theorem]{Proposition}
\newtheorem{notation}[theorem]{Notation}
\newtheorem*{Ques*}{Question}

\theoremstyle{definition}

\newtheorem*{Oss'}{Remark}

\newtheorem{Def}[theorem]{Definition}
\newtheorem*{Def*}{Definition}

\newtheorem{Rmk}[theorem]{Remark}
\newtheorem{Example}[theorem]{Example}

\definecolor{M}{RGB}{255,215,0}

\definecolor{V}{RGB}{255,95,144}

\definecolor{AC}{RGB}{255,0,0}

\definecolor{AL}{RGB}{10,200,100}

\title{K-theory of weighted blowups}

\author{Veronica Arena}
\address{Department of Pure Mathematics {\it \&} Mathematical Statistics, 
University of Cambridge, Cambridge, UK}
\email{va365@cam.ac.uk}

\author{Alessio Cela}
\address{Department of Pure Mathematics {\it \&} Mathematical Statistics, 
University of Cambridge, Cambridge, UK}
\email{ac2758@cam.ac.uk}

\author{Alberto Landi}
\address{Brown University, 151 Thayer Street, Providence, RI 02912, USA}
\email{alberto\_landi@brown.edu}

\author{Michele Pernice}
\address{University of Washington, 4110 E Stevens Way NE, Seattle, WA 98195, Stati Uniti}
\email{mpernice@uw.edu}
\begin{document}

\begin{abstract}
    We compute the K-theory of weighted blowups of smooth stacks satisfying the resolution property along smooth centers. As an application, we determine the K-theory of the stack of stable genus 1 curves with 2 marked points. Furthermore, we express the Lambda polynomial of the tangent complex of the blowup morphism in terms of the blowup data.
    
    Along the way, we generalize the Anderson–Payne construction of equivariant operational K-theory from torus actions to actions of smooth affine algebraic groups.
\end{abstract}

\maketitle


\section{Introduction}

Blowups are one of the most important constructions in the realm of algebraic geometry: they allow us to replace any given closed subset (called center) of an ambient space with a Cartier divisor (called exceptional divisor), and they satisfy a universal property among maps with this property. The exceptional divisor replaces the center with the projectivization of the normal cone of the center inside the ambient space. Although the construction itself is seemingly simple, blowups have been used extensively throughout the literature: they are the building blocks for birational transformations of smooth projective varieties \cite{AKMW_weakfactorization}, they provide the main desingularization tool \cite{hironaka-resolution}, they allow us to flatify coherent sheaves \cite{GruRay}, and the list goes on.

More recently, a stacky generalization of blowups, called weighted blowups, has gained traction thanks to the work of \cite{abramovich-temkin-wlodarczyk-weighted-resolution} and formalized by \cite{quek-rydh-weighted-blowup}. Intuitively, we can think of them as follows: given a \emph{weighted} center, which is informally a closed subset together with a notion of weights for the normal directions, the \emph{weighted} blowup allows us to replace the weighted center with a Cartier divisor (as in the classical case), but the exceptional divisor is now the \emph{weighted} normal cone, and in general is only a Deligne-Mumford stack (or tame stack in positive characteristic). Weighted blowups have proved to be quite useful. For instance, in \cite{abramovich-temkin-wlodarczyk-weighted-resolution}, the authors proved that Hironaka's desingularization algorithm can be streamlined and improved considerably if one uses this more general theory. They also enable constructions unachievable using classical blowups alone, such as partial resolutions of singularities that preserve the normal crossing locus~\cite{AT-nc,BDS-B-nc,Wlodarczyk-nc}.
Furthermore, they appear constantly in the context of moduli spaces, where the weighted blowup structure has facilitated various enumerative computations \cite{Inchiostro,canning-oprea-pandharipande,Arena-Obinna,battistella-dilorenzo,arena-song}.

Since classical blowups are so prominent in the literature, it was only natural that algebraic geometers started asking themselves how algebro-geometric invariants change under these transformations. When both the ambient space and the center are smooth, explicit \emph{blowup formulas} have been given for derived categories (see \cite{Orlov}), Chow rings (see \cite[Chapter 6.7]{fulton}) and K-theory (see \cite[Exposé VII]{SGA6}, and \cite{Thomason-blowup}). More recently, these results have been generalized to all oriented cohomology theories (see \cite{orientedcohomology_blowup}).

The analogous computations for weighted blowups are more delicate and present further technical complications, due to both the presence of stackiness and the fact that the exceptional divisor is not a projective bundle over the center. The description of the Chow ring of a weighted blowup was obtained in \cite{Arena-Obinna}, while the derived category was computed in \cite{Li}.

Let $\widetilde{\cY} \to \cY$ be a regular weighted blowup of $\cY$ at $\cX$, where $\cY$ and $\cX$ are smooth algebraic stack with affine stabilizers satisfying the resolution property (see \S\ref{sec: notation-resolution} for the definition). In this work, we fully describe the K-theory of weighted blowups in terms of only the K-theory of the ambient variety, the K-theory of the center, and the \emph{weighted Euler class} $\lambda_{-1}^{\Gm}(\sN^\vee_{\cX/\cY})$ of the conormal bundle (see Definition~\ref{def: euler class twisted weighted vector bundle}). Our main result, proved in \S\ref{sec:proof-main-theorem}, is the following.

\begin{Thm}\label{thm: main theorem}
Assume that $\cY$ and $\cX$ are smooth algebraic stack with affine stabilizers satisfying the resolution property. We have an isomorphism of groups
$$
\Kring(\widetilde{\cY}) \simeq \frac{\Kring(\cX)[t^{\pm 1}] \oplus \Kring(\cY)}{\left\{\begin{array}{c} (\gamma \cdot \alpha, i_*\alpha) \quad \forall \alpha \in \Kring(\cX); \\ (\widetilde \alpha,0) \quad \forall \widetilde \alpha \in (p(t^{-1})) \subset \Kring(\cX)[t^{\pm 1}]
\end{array}\right\}}
$$ 
where $p(t^{-1})=\lambda^{\Gm}_{-1}(\sN^\vee_{\cX/\cY}) \in \Kring_{\Gm}(\cX)\simeq \Kring(\cX)[t^{\pm1}]$ and $\gamma = \frac{p(t^{-1})-p(1)}{t-1}$.
  
Moreover, the multiplication rules in $\Kring(\widetilde \cY)$ are as follows: 
\begin{align*}&(q,0)  \cdot (q',0)= (q \cdot q' \cdot (1-t),0),\\ &(0,\beta)\cdot (0,\beta')=(0,\beta\cdot \beta'),\\
&(q,0)\cdot(0,\beta)=(q\cdot i^*\beta,0),
\end{align*}
for every $q,q' \in \Kring(\cX)[t^{\pm 1}]$ and $\beta,\beta' \in \Kring(\cY)$.
\end{Thm}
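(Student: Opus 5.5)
The plan is to reduce to the localization exact sequence for the closed immersion of the exceptional divisor and then identify all the pieces in terms of the center.

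\textbf{Setup.} Let $\cE \subset \widetilde{\cY}$ be the exceptional divisor, with $j\colon \cE \hookrightarrow \widetilde{\cY}$ and $\pi\colon \widetilde{\cY} \to \cY$. Recall that $\cE = \mathcal{P}(\sN_{\cX/\cY})$ is the weighted projectivization $[(\sN_{\cX/\cY}\smallsetminus 0)/\Gm]$ over $\cX$, and $\widetilde{\cY}\smallsetminus\cE \cong \cY\smallsetminus\cX$. Since all stacks involved are smooth with the resolution property, $\Kring = \Kgroup$ for each of them.

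\textbf{Step 1: K-theory of $\cE$.} Homotopy invariance gives $\Kgroup_{\Gm}(\sN)\cong \Kring(\cX)[t^{\pm1}]$. Localization for the zero section $\cX \hookrightarrow \sN$, taken $\Gm$-equivariantly, has image equal to multiplication by the equivariant Euler class $\lambda_{-1}^{\Gm}(\sN^\vee)=p(t^{-1})$ (self-intersection formula). Hence
\[
\Kring(\cE)\cong \Kring(\cX)[t^{\pm1}]/(p(t^{-1})),
\]
where $t$ is the class of $\sO_{\cE}(1)$ (or its inverse, depending on sign conventions). I expect surjectivity on the left, i.e.\ the precise form of localization for quotient stacks, to need some care.

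\textbf{Step 2: the gluing.} Compare the two localization sequences
\[
\Kgroup(\cE)\xrightarrow{j_*}\Kgroup(\widetilde{\cY})\to \Kgroup(\cY\smallsetminus\cX)\to 0
\quad\text{and}\quad
\Kgroup(\cX)\xrightarrow{i_*}\Kgroup(\cY)\to\Kgroup(\cY\smallsetminus\cX)\to 0.
\]
Define a map $\Kring(\cX)[t^{\pm1}]\oplus\Kring(\cY)\to\Kring(\widetilde\cY)$ by $(q,\beta)\mapsto j_*(q)+\pi^*\beta$. Surjectivity follows from a diagram chase: any class restricts to $\cY\smallsetminus\cX$, lifts to $\Kring(\cY)$, and the difference lies in the image of $j_*$.

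\textbf{Step 3: the kernel.} First, $(p(t^{-1}),0)$ maps to zero by Step 1. Second, we need the key relation
\[
\pi^* i_*\alpha = j_*(\gamma\cdot\alpha).
\]
To prove it, deform to the normal cone (or use the blowup square), which reduces to the case $\cY=\sN$ over $\cX$, i.e.\ to the weighted blowup of a vector bundle at its zero section. There, $\pi^*[\sO_{\cX}]$ is computed from the Koszul resolution of the zero section pulled back to the total space of $\sO(-1)$ over $\cE$. The Koszul complex factors through the tautological section, and its class equals $(p(t^{-1})-p(1))/(t-1)$ times $[\sO_\cE]$, using $[\sO_\cE]=1-t$ (or $1-t^{-1}$), so that $j_*(x) = x(1-t)$ on pullbacks.

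Showing that these relations generate the whole kernel is the main obstacle. To handle it, I would construct an inverse, or else use exactness of $\Kgroup(\cE)\to\Kgroup(\widetilde\cY)$ more precisely. Concretely, compare kernels: $\ker j_*$ and $\ker i_*$ should correspond via the excess intersection formula $j^*j_*(x)=x(1-t)$ together with $\pi_*\pi^*=\mathrm{id}$. The latter holds since $R\pi_*\sO=\sO$ for weighted blowups (rational singularities and tameness). Given $(q,\beta)\mapsto 0$, pushing forward via $\pi_*$ shows that $\beta=-\pi_*j_*q \in \im i_*$, say $\beta=i_*\alpha$. Subtracting the relation $(\gamma\alpha, i_*\alpha)$ reduces to $j_*(q')=0$. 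Finally, show $\ker j_*=(p(t^{-1}))$ modulo $\Kring(\cX)$-multiples of $\gamma$, using the semiorthogonal-type description (as in Li's derived category result) to see that $j_*$ is injective on a complement.

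\textbf{Step 4: multiplication.} The ring structure follows from the projection formula: $j_*q\cdot j_*q' = j_*(q\cdot j^*j_*q') = j_*(qq'(1-t))$, $j_*q\cdot\pi^*\beta=j_*(q\cdot i^*\beta)$ (since $\pi\circ j$ factors through $\cX$), and $\pi^*$ is a ring map.
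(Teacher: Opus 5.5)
Your Steps 1, 2 and 4 match the paper, and so does your reduction in Step 3 (using $\pi_*\pi^*=\mathrm{id}$ to force $\beta=i_*\alpha$). However, the two load-bearing points are left unproved.

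\textbf{The kernel.} Saying that $j_*$ is injective on a complement of $\gamma\cdot\Kring(\cX)$ ``by a semiorthogonal-type description'' is a pointer, not an argument. If you really import such a decomposition, you must check that it holds for stacks with the resolution property and identify its pieces; at that point it already gives the additive answer. The missing elementary idea is to use $g_*$, where $g\colon\cE\to\cX$. Write $p(t^{-1})=\sum_{i=0}^{w}b_it^{-i}$ with $b_0=1$ and $b_w$ a unit. Then $\Kring(\cE)$ is free over $\Kring(\cX)$ on $1,t^{-1},\dots,t^{-(w-1)}$. The vanishing $Rg_*\sO_{\cE}(-k)=0$ for $0<k<w$ gives $g_*\bigl(\sum_{i<w}a_it^{-i}\bigr)=a_0$, and rewriting $\gamma$ modulo $p(t^{-1})$ gives $g_*\gamma=1$. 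If $g_*x=0=j_*x$, then $0=j^*j_*x=(1-t)x$, and comparing coefficients forces $x=0$; so the right complement is $\ker g_*$. Now take $x\in\ker j_*$. The element $x-\gamma\,g^*g_*x$ lies in $\ker g_*\cap\ker j_*$, because $j_*(\gamma\,g^*g_*x)=\pi^*i_*g_*x=\pi^*\pi_*j_*x=0$. Hence $x=\gamma\,g^*g_*x$ with $i_*g_*x=\pi_*j_*x=0$.

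\textbf{The key relation.} Your computation on $\sN$ is correct: there $\pi^*i_*\alpha=g^*(p(1)\alpha)=(1-t)\gamma\alpha$, since $p(t^{-1})=0$ on $\cE$. But ``deform to the normal cone'' does not transport it. The deformation produces a class on the blown-up deformation space supported on $\cE\times\A^1$, of the form $j_{\A^1*}\beta$ with $\beta\in\Kring(\cE\times\A^1)\simeq\Kring(\cE)$. This $\beta$ is determined only modulo $\ker j_{\A^1*}$, and at $t=0$ you learn only that $\beta-\gamma\alpha\in\ker j_{0*}\supseteq\gamma\cdot\operatorname{Ann}(p(1))$. Take a point in $\bP^2$: there $p(1)=0$, so $i_{0*}1=0$ on the normal cone. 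The $t=0$ fibre then cannot distinguish $\pi^*i_*1=0$ from $\pi^*i_*1=j_*\gamma$, even though $\pi^*i_*1\neq0$. You need a canonical lift that specializes correctly. The paper (whose $f$ is your $\pi$) proceeds as follows:
\begin{itemize}
\item It takes the refined Gysin pullback $f^!$ of the $\GL_n\times\Gm$-equivariant lci map $\hat f$, so that $f^*i_*=j_*f^!$ holds formally.
\item It shows $f^!$ is constant along the deformation (Corollary~\ref{cor:restriction of Gysin}).
\item It computes $f^!$ itself rather than $j_*f^!$. Equivariant operational $K$-theory (Corollary~\ref{cor: opK=K}) gives $f^!(\alpha)=g^*\alpha\cdot f^!(1)$. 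The splitting principle then reduces $f^!(1)$ to the model over $[\A^d/\Gm^d]$, where $1-t$ is a non-zero-divisor, so $(1-t)f^!(1)=p(1)$ forces $f^!(1)=\gamma$.
\end{itemize}

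Finally, a sign. With your map $(q,\beta)\mapsto j_*q+\pi^*\beta$ and your identity $\pi^*i_*\alpha=j_*(\gamma\alpha)$, the element that dies is $(\gamma\alpha,-i_*\alpha)$. The relation $(\gamma\alpha,i_*\alpha)$ in the statement corresponds to the map $-j_*+f^*$ of Corollary~\ref{cor: main exact sequence}.
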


We obtain this formula by constructing an exact sequence (Corollary~\ref{cor: main exact sequence}) involving only the K-theories of $\tcY$, $\cY$ and $\cX$, analogous to that obtained in Chow in~\cite[Proposition 6.7]{fulton}. Moreover, when the restriction homomorphism $\Kring(\cY)\rightarrow\Kring(\cX)$ is surjective, the formula simplifies considerably and is presented in Corollary~\ref{cor: keel formula}.

The K-theoretic computations present various technical challenges. One subtlety in treating the general case of stacks is that their K-theory is not as well-behaved as that of schemes, even in the smooth setting. For this reason, we restrict ourselves to working with stacks satisfying the resolution property, equivalently, that are global quotient stacks of the form $[X/\GL_n]$, where $X$ is a quasi-affine scheme (see Theorem~\ref{thm: basics on resolution property}). This condition allows us to work with equivariant $K$-theory of schemes, which already appeared in the literature (see \cite{Thomason-equivariantKtheory}).

A more fundamental challenge is that, even for quotient stacks, we cannot use the approximation techniques introduced in \cite{tot} for Chow rings (see also \cite{EG}) to reduce to the case of schemes (or algebraic spaces). To resolve this issue, we extend various K-theoretic constructions to the equivariant setting.

For instance, in \S\ref{sec: gysin and operational} we generalize the definition and some properties of $G$-equivariant Gysin morphisms and operational K-theory for smooth affine group scheme $G$, both introduced in~\cite{AndersonPayne-OperationalKtheory} in the case when $G$ is a split torus. We also prove an \emph{excess intersection formula} (Theorem~\ref{prop: excess intersection formula}) for Gysin morphisms.

In \S\ref{sec:applications}, we present two applications of our main results. First, using the weighted blowup description of the moduli stack of stable genus 1 curves with two marked points established in \cite{Inchiostro}, we compute the K-theory of $\overline{\cM}_{1,2}$. To the best of our knowledge, this yields one of the few explicit computations of the K-theory of moduli stacks currently available in the literature. Besides our result, such computations are known only for $\overline{\cM}_{0,n}$~\cite{orientedcohomology_blowup}, $\overline{\cM}_{1,1}$ (Example~\ref{example M11}), $\cM_2$ and $\overline{\cM}_2$~\cite{Edidin-Hu}.

The K-theory of $\cM_{1,2}$ is proved in \S\ref{sub:K-theory-M-12} and it is given as follows.

\begin{Thm}\label{thm: K-theory M12bar}
    The K-theory of $\overline{\cM}_{1,2}$ is
    {\footnotesize
\[
\Kring(\overline{\cM}_{1,2})\simeq
\frac{\Z[u^{\pm1},s^{\pm1}]}
{\bigl((1-s^{-4})(1-s^{-6})+(1-u^{-4})(1-u^{-6}),
(1-s)(1-u),
(1-u^{-2})(1-u^{-3})(1-u^{-4})\bigr)}.
\]
} 
Moreover, $s=[\sO(-\Delta_1)]$, where
    $\Delta_1$ parametrizes curves with an elliptic tail, and $u=[ \mathbb{E}(\Delta_1)]$ is Hodge bundle twisted by $\Delta_1$.
\end{Thm}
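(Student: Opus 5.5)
We will deduce the statement from Theorem~\ref{thm: main theorem}, applied to Inchiostro's description of $\overline{\cM}_{1,2}$ \cite{Inchiostro}. There, $\overline{\cM}_{1,2}$ is a weighted blowup of a weighted projective stack $\cY=\mathcal{P}(2,3,4)$, the quotient $[(\A^3\smallsetminus 0)/\Gm]$ with weights $2,3,4$, along a center $\cX$ supported at a single point. The center is the substack $\mathcal{P}(4)\simeq B\mu_4$ given by the vanishing of the two coordinates of weights $2,3$. The first step is to pin down the blowup data: the center, its normal bundle and the weights on it. We expect the normal directions to be the two coordinates of $\Gm$-weights $2$ and $3$, with blowup weights making the exceptional divisor $\Delta_1\simeq \mathcal{P}(4,6)\simeq\overline{\cM}_{1,1}$.

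Next we compute the three inputs.
\begin{itemize}
\item For $\cY=[(\A^3\smallsetminus0)/\Gm]$, the localization sequence together with homotopy invariance gives
\[
\Kring(\cY)=\Z[u^{\pm1}]/\bigl((1-u^{-2})(1-u^{-3})(1-u^{-4})\bigr),
\]
where $u$ is the class of $\sO(1)$. This is the $K$-theoretic Euler class of $\A^3$.
\item For the center, $\Kring(\cX)=\Kring(B\mu_4)=\Z[u^{\pm1}]/(1-u^{-4})$, and the restriction $i^*$ is surjective.
\item The weighted Euler class of the conormal bundle has the form
\[
p(t^{-1})=\lambda_{-1}^{\Gm}(\sN^\vee)=(1-u^{-2}t^{-a})(1-u^{-3}t^{-b}),
\]
where $a,b$ are the blowup weights. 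Any scaling convention must be matched to Definition~\ref{def: euler class twisted weighted vector bundle}.
\end{itemize}

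Since $i^*$ is surjective, we use the simplified presentation of Corollary~\ref{cor: keel formula} rather than the full group-theoretic formula. This presents $\Kring(\tcY)$ as a quotient of $\Kring(\cY)[t^{\pm1}]$, where $t$ is the class of $\sO(-E)$. The relations are: $(1-t)\cdot\ker(i^*)=0$, and a lift of the relation $p(t^{-1})$ to $\Kring(\cY)[t^{\pm1}]$, corrected by the term $\gamma\cdot\alpha\leftrightarrow i_*\alpha$.

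Here $\ker i^*$ is generated by $(1-u^{-2})(1-u^{-3})$, the class of $[\sO_\cX]$ up to units. So we get relations of the form $(1-t)\cdot(\text{something})$.

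Then we change variables to $s=t$ (the class of $\sO(-\Delta_1)$) and to the twisted Hodge class. The Hodge bundle on $\cY$ is $\sO(1)$, so $u$ on $\overline{\cM}_{1,2}$ is $u\cdot t^{c}$ for a suitable $c$. We expect this reparametrization to turn the messy relations into the symmetric generators displayed in the statement:
\begin{itemize}
\item $(1-s)(1-u)$ expresses that $\Delta_1$ and the twisted Hodge class have disjoint supports;
\item the sum relation $(1-s^{-4})(1-s^{-6})+(1-u^{-4})(1-u^{-6})$ comes from the Euler class relation combined with $i_*$.
\end{itemize}
The last step is a Gröbner-style elimination verifying that the two ideals coincide in $\Z[u^{\pm1},s^{\pm1}]$.

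The main obstacle is getting the weights and twists exactly right: which $\Gm$-weights enter $p$, the sign conventions for $t$ versus $t^{-1}$, and the precise identification of the Hodge bundle and of $\sO(-\Delta_1)$ after pullback. Only with the correct normalization do the relations collapse to the stated symmetric form.

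To fix the conventions first, we would check the answer on the exceptional divisor. Restricting to $\Delta_1\simeq\overline{\cM}_{1,1}$ should reproduce $\Kring(\overline{\cM}_{1,1})$ from Example~\ref{example M11}, which means the relation $(1-s^{-4})(1-s^{-6})$ must hold there.
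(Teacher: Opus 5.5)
Your overall plan, applying Keel's formula (Corollary~\ref{cor: keel formula}) to Inchiostro's weighted blowup description, is the one the paper uses. The gap is that you have misidentified the center, and every input you compute depends on it.

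In \cite[Theorem 2.10]{Inchiostro} the center is $Z=V(x_0^3-x_1^2,\,x_2)\subset\cP(2,3,4)$. This is the orbit of $[1:1:0]$: a point with \emph{trivial} stabilizer, blown up with weights $(4,6)$. It is not the $\mu_4$-gerbe $V(x_0,x_1)\simeq B\mu_4$. Your choice cannot produce the stated ring. For a weighted blowup centered at $B\mu_4$ with weights $(a,b)$, the exceptional divisor has $K$-theory $\Kring(B\mu_4)[t^{\pm1}]/\bigl((1-L_1t^{-a})(1-L_2t^{-b})\bigr)$ with $L_1,L_2$ line bundles. This is free of rank $4(a+b)$ over $\Z$, whereas $\Kring(\cP(4,6))$ has rank $10$. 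So no choice of weights makes the exceptional divisor $\Delta_1\simeq\overline{\cM}_{1,1}$, and no later reparametrization or elimination can repair this.

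Separately, even within your setup you conflate the two ingredients of Keel's formula. There $\ker i^*$ is generated by $1-u^{-4}$. The class $(1-u^{-2})(1-u^{-3})=i_*(1)$ is not in $\ker i^*$ at all, since $i^*i_*(1)=\lambda_{-1}(\sN^\vee)\neq 0$ in $\Kring(B\mu_4)$ by the self-intersection formula. The ideal $(1-s)\cdot\ker i^*$ and the term $i_*(1)$ inside $Q(s)$ play different roles.

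With the correct center there is nothing to eliminate. One has $\Kring(Z)=\Z$, and $i^*$ is the rank map, with kernel generated by $1-u$. The normal bundle is trivial, so $p(t^{-1})=(1-t^{-4})(1-t^{-6})$ and $p(1)=0$. The Koszul resolution of the complete intersection of the equations of degrees $6$ and $4$ gives $i_*(1)=[\sO_Z]=(1-u^{-6})(1-u^{-4})$. Keel's formula then gives the displayed presentation verbatim, with $u=f^*[\sO_{\cP(2,3,4)}(1)]$ and $s=[\sO_{\tcY}(1)]=[\sO(-\Delta_1)]$. The relation $(1-s)(1-u)$ is just $(1-s)\cdot\ker i^*$, not a disjointness statement.

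What your proposal leaves as an unspecified exponent $c$ is the actual content of the ``moreover'' part, and it needs an argument:
\begin{enumerate}
\item Off $\Delta_1$ one has $f^*\sO(1)\simeq\mathbb{E}$. This is because the forgetful map $\cP(2,3,4)\smallsetminus Z\to\cP(4,6)\simeq\overline{\cM}_{1,1}$ pulls $\sO(1)$ back to $\sO(1)$, and $\sO_{\cP(4,6)}(1)$ is the Hodge bundle (Example~\ref{example M11}).
\item Hence $f^*\sO(1)\simeq\mathbb{E}(m\Delta_1)$ for some $m$.
\item Restrict to the contracted divisor $\Delta_1\simeq\overline{\cM}_{1,1}$. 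There $f^*\sO(1)$ is trivial, $\mathbb{E}|_{\Delta_1}\simeq\sO(1)$, and $\sO(\Delta_1)|_{\Delta_1}\simeq\psi_1^\vee\simeq\mathbb{E}^\vee\simeq\sO(-1)$.
\item This gives $\sO\simeq\sO(1-m)$, so $m=1$ and $u=[\mathbb{E}(\Delta_1)]$.
\end{enumerate}
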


Secondly, in \S\ref{sub:lambda-poly}, we compute the Lambda polynomial of the tangent complex associated to the weighted blowup morphism. 

Set 
$$Q_z(t) := \lambda_z(\sN_{\cX/\cY}) \in \Kring_{\mathbb{G}_m}(\cX)[\![ z ]\!]= \Kring(\cX \times B\Gm)[\![ z ]\!]$$
and let
$$
q: \Kring(\cX)[t^{\pm 1} ] [\![ z ]\!] \oplus \Kring(\cY)[\![ z ]\!] \longrightarrow \Kring(\widetilde{\cY})[\![ z ]\!]
$$
be the quotient map induced by the presentation of $\Kring(\widetilde{\cY})$ in Theorem \ref{thm: main theorem}.

\begin{Thm}\label{thm: Lambda_poly_tangent}
    The identity
    $$
    \lambda_z(\bT_{\widetilde{\cY}/\cY})=\frac{\lambda_z(\bT_{\widetilde{\cY}})}{\lambda_z(\bT_{\cY})}=q \Bigg(  \bigg( \frac{(1+zt^{-1}) Q_z(t)}{(1+z)Q_z(1)}-1 \bigg) \frac{-1}{1-t},1 \Bigg) 
    $$
    holds in $\Kring(\widetilde{\cY})[\![z]\!]$.
\end{Thm}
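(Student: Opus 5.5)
The plan is to reduce the statement to a formal identity in $\Kring(E)$, where $E=\cP(\sN_{\cX/\cY})\xrightarrow{j}\widetilde{\cY}$ is the exceptional divisor. The element $(r,1)$ in the presentation of Theorem~\ref{thm: main theorem} stands for $1+j_*(r)$, and $t$ is the class of the tautological line bundle on $E=[(\sN\smallsetminus 0)/\Gm]$. With these conventions $j^*j_*$ is multiplication by $(1-t)$, which is the first multiplication rule in Theorem~\ref{thm: main theorem}. The first equality $\lambda_z(\bT_{\widetilde\cY/\cY})=\lambda_z(\bT_{\widetilde\cY})/\lambda_z(\bT_\cY)$ is immediate from the distinguished triangle $\bT_{\widetilde\cY/\cY}\to\bT_{\widetilde\cY}\to\pi^*\bT_\cY$ and multiplicativity of $\lambda_z$. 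So everything rests on computing $x:=\lambda_z(\bT_{\widetilde\cY/\cY})$.

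\emph{Step 1: the class of the relative tangent complex.} Away from $E$ the map $\pi$ is an isomorphism, so $[\bT_{\widetilde\cY/\cY}]$ is supported on $E$. I will compute its restriction $j^*[\bT_{\widetilde\cY/\cY}]=[\bT_{\widetilde\cY}|_E]-[\pi^*\bT_\cY|_E]$. Two inputs are needed.
\begin{itemize}
\item The normal bundle sequence $0\to\bT_E\to\bT_{\widetilde\cY}|_E\to\sO_E(E)\to 0$, with $\sO_E(E)=t^{-1}$.
\item The weighted Euler sequence $0\to\sO_E\to\bigoplus_d \sN_d\otimes t^{-d}\to \bT_{E/\cX}\to 0$, where $\sN=\bigoplus_d\sN_d$ is the weight decomposition.
\end{itemize}
Combining these with $\bT_\cY|_\cX=\bT_\cX+\sN$ gives
\[j^*[\bT_{\widetilde\cY/\cY}]=[\sN^{tw}]-[\sN]+t^{-1}-1.\]
Taking $\lambda_z$ then gives
\[j^*x=\frac{(1+zt^{-1})\,Q_z(t)}{(1+z)\,Q_z(1)},\]
where $Q_z(t)=\lambda_z(\sN^{tw})$ is the $\Gm$-equivariant class of Theorem~\ref{thm: Lambda_poly_tangent} and $Q_z(1)=\lambda_z(\sN)$. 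The weights and dualities must be pinned down so that they agree with the convention $p(t^{-1})=\lambda_{-1}^{\Gm}(\sN^\vee)$ used in Theorem~\ref{thm: main theorem}. In particular the sign in $-1/(1-t)$ should come out of this bookkeeping.

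\emph{Step 2: divisibility.} The ratio $R:=j^*x$ becomes $1$ at $t=1$. Hence $R-1$ is divisible by $1-t$ in $\Kring(\cX)[t^{\pm1}][\![z]\!]$, coefficientwise in $z$, so the element $r:=(R-1)\cdot\frac{-1}{1-t}$ appearing in the statement is a genuine Laurent polynomial in each $z$-degree.

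\emph{Step 3: identifying $x$.} Since $x|_{\widetilde\cY\smallsetminus E}=1$, the localization sequence gives $x=1+j_*(r')$ for some $r'$, and then $(1-t)r'=j^*x-1$. The difficulty is that multiplication by $1-t$ on $\Kring(E)$ need not be injective, and Step 3 is where I expect the main obstacle. An element $r'$ with $(1-t)r'=0$ could a priori have $j_*r'\neq 0$, so knowing $j^*x$ alone does not determine $x$.

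To get around this I would not argue via $j^*$ at all. Instead I would compute $x$ directly in a universal model. By deformation to the (weighted) normal cone, and functoriality of the blowup presentation under the specialization maps, it should suffice to treat the weighted blowup of the total space of $\sN$ along the zero section. There $\widetilde\cY$ is the total space of the line bundle $t^{-1}$ over $E$, so $\Kring(\widetilde\cY)=\Kring(E)$ by homotopy invariance, and $j_*$ is multiplication by $\lambda_{-1}$ of the tautological dual. In this model the identity becomes literal.

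Alternatively, I would use a splitting-principle argument. Write $\bT_{\widetilde\cY/\cY}$ as $-j_*$ of a combination of line bundles pulled back from $\cX\times B\Gm$. For a line bundle $L$ extending over $\widetilde\cY$, one has $j_*(L|_E)=L-L\otimes\sO(-E)$, so
\[\lambda_z\bigl(j_*(L|_E)\bigr)=\frac{1+zL}{1+zL(-E)},\]
and the resulting products can be expanded directly in the presentation. Either route yields $x=q\bigl(r,1\bigr)$, as claimed.
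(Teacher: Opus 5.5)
Your route (a) is the paper's route: deformation to the weighted normal cone, then an explicit computation in a universal model. Your Step 1, via the normal-bundle and weighted Euler sequences, is a fine substitute for the paper's toric log-cotangent computation of the same local class. But the obstacle you isolate in Step 3 is genuine, and neither of your two escapes removes it.

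\textbf{Route (a) only sees $j^*x$.} In the normal-cone model $\widetilde{\cD}_0$ (the total space of $\sO_E(-1)$), restriction $j_0^*\colon\Kring(\widetilde{\cD}_0)\to\Kring(E)$ is an isomorphism. So ``the identity becomes literal'' there means exactly $j^*x=j^*y$. Worse, the specialization map $\sigma\colon\Kring(\tcY)\to\Kring(\widetilde{\cD}_0)$ satisfies $j_0^*\circ\sigma=j^*$. This is because the exceptional divisor of $\widetilde{\cD}$ is $\tcX\times\A^1$, and $\Kring(\tcX\times\A^1)$ restricts isomorphically to both fibres. Hence $\ker\sigma=\ker j^*$, and no compatibility with specialization can carry the identity from $t=0$ back to $t=1$.

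Concretely, for the blowup of a point in $\bP^2$, $i_*$ is injective, hence so is $j_*$ by Corollary~\ref{cor: main exact sequence}. Yet $j^*j_*(1-t)=(1-t)^2=0$ in $\Kring(\bP^1)$. So $x$ and $x+z\,j_*(1-t)$ are different classes that agree in the normal-cone model. The paper's proof makes the same reduction. Its sentence asserting that an $\alpha$ satisfying \eqref{eqn: def_alpha} for $\widetilde{\cD}_0\to(D_{\cX_\bullet}\cY)_0$ also satisfies it for $\widetilde{\cD}\to D_{\cX_\bullet}\cY$ is exactly this step, and it fails for $\alpha+(1-t)$ in this example.

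\textbf{Route (b) presupposes the missing information.} To write $[\bT_{\tcY/\cY}]=-j_*(c)$ with $c$ explicit, you must already know $c$, whereas Step 1 only determines $(1-t)c$. Moreover, $g^*L\otimes t^k$ extends to $\tcY$ only when $L$ comes from $\cY$.

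\textbf{What $j^*$ misses.} Suppose $(1-t)a=0$ in $\Kring(\tcX)$, and write $a=\sum_{k<w}a_kt^{-k}$. Reducing $ta$ with $p(t^{-1})=0$ and $b_0=1$ gives $a_k=a_0(b_0+\dots+b_k)$ and $a_0\,p(1)=0$, where $a_0=g_*a$. So $j^*$ misses exactly an element $a_0\in\operatorname{Ann}(p(1))$, which may satisfy $i_*a_0\neq 0$. Equivalently, $j^*$ and $f_*$ jointly detect classes on $\tcY$; this follows from Corollary~\ref{cor: main exact sequence}, Corollary~\ref{cor: push-forward-upper-shriek} and the argument of Proposition~\ref{prop:fulton}\ref{point: c}. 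So you would also have to compute $f_*x$.

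\textbf{A repair.} Run the deformation $\Gm$-equivariantly. The map $[\widetilde{\cD}/\Gm]\to[D_{\cX_\bullet}\cY/\Gm]$ is the weighted blowup along $\cX\times[\A^1/\Gm]$, whose K-theory is $\Kring(\cX)[u^{\pm1}]$. Since $\Gm$ acts on the cone through its grading, the Euler class of the conormal bundle of this centre is $p(u^{\pm1})$. This is a Laurent polynomial with a unit extreme coefficient, hence a non-zero-divisor. The computation above then shows that $1-T$ is injective on the K-theory of the exceptional divisor. Therefore classes of the form $1+J_*(c)$ are determined by their restriction to that divisor. That restriction can be computed on the special fibre by homotopy invariance, via an equivariant version of your Step 1. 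Restricting to the open substack $[(\widetilde{\cD}\smallsetminus\widetilde{\cD}_0)/\Gm]\simeq\tcY$, where $u\mapsto 1$, then yields the theorem.

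\textbf{Minor bookkeeping.} With $q=-j_*+f^*$ as in Corollary~\ref{cor: main exact sequence}, $(r,1)$ is $1-j_*(r)$, not $1+j_*(r)$. With your convention, the stated factor $-1/(1-t)$ would give $j^*=2-R$. Also, with $\sO_E(E)=t^{-1}$, the Euler sequence must involve $\sN_d\otimes t^{d}$ (not $t^{-d}$) to produce $Q_z(t)$.
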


\subsection{Outline of the paper}

The paper is organized into four sections. 

In \S\ref{sec:weighted-blowups-and-K-theory}, we set the notation and state preliminary results regarding weighted blowups and $G$-equivariant $K$-theory. More specifically, in \S\ref{sub:weighted-blowups} we recall the theory of regular weighted blowups as developed in \cite{quek-rydh-weighted-blowup}, starting
with the ``local'' case, i.e.\! the weighted blowup of $0$ in $\bA^d$. In \S\ref{sub:basics-K-theory} we recall basic results in $G$-equivariant $K$-theory, mainly following \cite{Thomason-equivariantKtheory}, and prove some preliminary results that are essential for the rest of the paper. For instance, we generalize the homotopic invariance of K-theory to twisted weighted vector bundle (Proposition~\ref{prop: homotopy invariance twisted}), which subsequently allows us to prove a version of the splitting principle for the K-theory of twisted weighted vector bundles (see Propositions~\ref{prop:classical-splitting-principle} and \ref{lemma: BT-toBGwn}). In \S\ref{sub:blowup-0-An} we compute explicitly the K-theory for the weighted blowup of $0$ in $\mathbb{A}^d$, which will serve as local case for general blowups.

Section~\S\ref{sec: gysin and operational} is dedicated to generalizing the construction of operational K-theory and the refined Gysin morphism to the $G$-equivariant setting.

Subsequently, in \S\ref{sec:K-theory-weighted-blowups}, we describe the K-theory ring of a weighted blowup. Section~\S\ref{sub: Gysin for Weighted Blowups} is dedicated to describing the Gysin morphism in the weighted blowup case, while \S\ref{sub:blowup-main-theorem} focuses on obtaining the exact sequence key to the proof of Theorem~\ref{thm: main theorem}, to which is dedicated \S\ref{sec:proof-main-theorem}.
We end this section by generalizing Keel's formula to weighted blowups as a corollary of our main result. 

Finally, \S\ref{sec:applications} is dedicated to applications: \S\ref{sub:K-theory-M-12} carries out the computation of the K-theory of $\overline{\cM}_{1,2}$, whereas in \S\ref{sub:lambda-poly} we compute the lambda polynomial of the tangent complex of the blowup morphism.

\subsection{Acknowledgments}

We thank Dan Abramovich, Samir Canning, Zhengning Hu, Oliver Li, Aitor Iribar Lopez, Angelo Vistoli for various discussions about the topic.

During the preparation of the manuscript, V.~A. was supported by EPSRC Horizon Europe Guarantee EP/Y037162/1; A.~C.\ was supported by SNF grant P500PT-222363; A.~L. was partially funded by NSF grant 2401358.

\subsection{Conventions}\label{conventions}
Throughout the paper, we will work over an algebraically closed field $k$.

\subsubsection{$\Gm$-actions and $\Proj$ constructions}

Firstly, we know that every $\Z$-grading on a ring $R$ induces a $\Gm$-coaction on $R$, which in turn induces a $\Gm$-action on $\spec R$. The first association can be made in two natural ways, either preserving the signs of the grading or reversing them. We choose to preserve them in accordance with the literature (see for instance \cite[\S1.1]{quek-rydh-weighted-blowup}).

Let $R=\oplus_{n\in \Z} R_n$ be a graded ring, we denote by $c:R\rightarrow R[t^{\pm 1}]$ the $\Gm$-coaction defined by $c(\{r_n\})=\{r_nt^n\}$ for $r_n \in R_n$. This gives us an induced $\Gm$-action $a:\Gm \times \spec R \rightarrow \spec R$.

\begin{Rmk}
     Let us take for instance $R:= \Z[x_1,\dots,x_d]$ polynomial ring and suppose we give weight $w_i \in \Z$ on the coordinate $x_i$. This induces a grading of $R$ by setting $R_n:= \{ p(x_1,\dots,x_d) \vert \deg^{\bf{w}}p=n\}$
    where $p$ is homogeneous and $\deg^{\bf{w}}p$ is the weighted degree of $p$. The induced coaction $c:R\rightarrow R[t^{\pm 1}]$ can be described by the association $x_i\mapsto x_it^{w_i}$ for every $i$. Unraveling the $\spec $ construction, we get an induced $\Gm$-action 
    $$ a:\Gm \times \A^d \rightarrow \A^d$$
    which is defined by the association $(\lambda,s_1,\dots,s_d) \mapsto (\lambda^{w_1}s_1,\dots,\lambda^{w_d}s_d)$ where $(s_1,\dots, s_d) \in \Gamma(S,\sO_S)^d$ and $\lambda \in \Gamma(S,\sO_S)^*$ for every scheme $S$. 
\end{Rmk}

Moreover, given $\sE$ a locally free sheaf on the algebraic stack $\cX$, we denote by $\mathbb{V}(\sE)$ (respectively $\bP(\sE)$) the algebraic stack $\spec_{\cX}(\Sym^{\bullet}\sE^{\vee})$ (respectively $\Proj_{\cX}(\Sym^{\bullet}\sE^{\vee})$).

Note that we are following Fulton's notation on total spaces for vector (or projective) bundles.

\begin{Def}
    We define $[\bA^1/\Gm]$ where $\Gm$ acts with weight $1$ as the \emph{geometric} standard representation over $B\Gm$. 
\end{Def}

\begin{Rmk}
We use the terminology \emph{geometric standard representation} over $B\Gm$ to distinguish it from the standard algebraic representation corresponding to the weight-$1$ action on the tautological line bundle (as opposed to its total space) on $B\G_m$.
\end{Rmk}

\subsubsection{Quotient stacks and the resolution property}\label{sec: notation-resolution}

We will denote algebraic stacks with curly letters, such as $\cX$, and reserve roman letters for algebraic spaces, such as $X$. 

In this paper, we study the $K$-theory of weighted blowups of a smooth finite-type algebraic stack $\cY$ satisfying the resolution property and with affine stabilizers. Recall that $\cY$ satisfies the resolution property if every coherent $\sO_{\cY}$-module admits a surjection from a locally free sheaf of finite rank. The following characterization of algebraic stacks with the resolution property is due to Totaro.

\begin{Thm}[{\cite[Theorem 1.1]{totaro_resolution},\cite[Theorem A]{Gross_resolution_property}}]\label{thm: basics on resolution property}
   Let $\cX$ be a quasi-compact quasi-separated algebraic stack. Then the following are equivalent:
   \begin{itemize}
       \item the stack $\cX$ has affine stabilizers and it satisfies the resolution property;
       \item the stack $\cX$ is a global quotient stack of the form $[X/\GL_n]$ where $X$ is a quasi-affine scheme.
   \end{itemize}
   In particular, the resolution property is stable under composition with quasi-affine morphisms.
\end{Thm}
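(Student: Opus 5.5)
This is a cited result \cite[Theorem 1.1]{totaro_resolution}, \cite[Theorem A]{Gross_resolution_property}; we only sketch the argument we would follow. Throughout, for a $\GL_n$-torsor $P \to \cX$ we identify coherent sheaves on $\cX$ with $\GL_n$-equivariant coherent sheaves on $P$.

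\textbf{From the quotient description to the resolution property.} Suppose $\cX=[X/\GL_n]$ with $X$ quasi-affine. The stabilizers are the stabilizer group schemes of points of $X$. These are closed subgroups of $\GL_n$, hence affine. For the resolution property, let $\sF$ be a $\GL_n$-equivariant coherent sheaf on $X$.
\begin{enumerate}
\item Since $X$ is quasi-affine, $\sO_X$ is ample, so $\sF$ is generated by its global sections $\Gamma(X,\sF)$.
\item $\Gamma(X,\sF)$ is a rational $\GL_n$-representation, hence the union of its finite-dimensional subrepresentations.
\item By quasi-compactness, a single finite-dimensional subrepresentation $V$ already generates $\sF$.
\item This gives an equivariant surjection $V\otimes \sO_X \to \sF$, which descends to a surjection from a locally free sheaf on $\cX$.
\end{enumerate}

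\textbf{From the resolution property to the quotient description.} Suppose $\cX$ has affine stabilizers and the resolution property. Choose a locally free sheaf $\sE$ of rank $n$ on $\cX$, and let $P=\mathrm{Isom}(\sO^n,\sE)\to\cX$ be its frame bundle. This is a $\GL_n$-torsor, so $\cX=[P/\GL_n]$, and we need $\sE$ such that $P$ is a quasi-affine scheme.

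\begin{enumerate}
\item \emph{$P$ is an algebraic space.} This holds exactly when each stabilizer $G_x$ acts faithfully on the fiber $\sE_x$. Because $G_x$ is affine, it embeds in some $\GL_m$. Using the resolution property and quasi-compactness, we enlarge $\sE$ by direct sums, tensor powers and duals of finitely many vector bundles until $\sE_x$ is faithful at every point. The requirement is open, so finitely many enlargements suffice.
\item \emph{$P$ is quasi-affine.} A coherent sheaf on $P$, after taking its equivariant pushforward to $\cX$, is a quotient of a sum of vector bundles. We want these bundles to be built from $\sE$, so that their pullbacks to $P$ are trivial. This lets us show that $\sO_P$ generates every quasi-coherent sheaf on $P$, i.e.\ that $\sO_P$ is ample. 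That in turn gives that $P\to \spec\Gamma(P,\sO_P)$ is a quasi-compact open immersion.
\end{enumerate}

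\textbf{Main obstacle.} The hard step is the second one: showing that $\sO_P$ is ample on the frame bundle. It requires that all the generating vector bundles can be chosen in the tensor category generated by a single $\sE$. For noetherian stacks this is Totaro's argument. For general quasi-compact quasi-separated stacks one needs Gross's approximation techniques to reduce to that case.

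\textbf{Stability under quasi-affine morphisms.} Let $\cY\to\cX=[X/\GL_n]$ be quasi-affine. Then $\cY=[X\times_{\cX}\cY/\GL_n]$. The map $X\times_{\cX}\cY\to X$ is quasi-affine and $X$ is quasi-affine, so $X\times_{\cX}\cY$ is a quasi-affine scheme. The equivalence above then shows that $\cY$ has affine stabilizers and the resolution property.
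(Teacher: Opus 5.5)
The paper does not prove this statement; it only cites Totaro and Gross, so deferring the hard implication matches what the paper does. Your proof that $[X/\GL_n]$ with $X$ quasi-affine has affine stabilizers and the resolution property is fine over the paper's base field $k$. Your deduction of the last sentence is also fine, provided the target already has affine stabilizers. The surjection $f^*f_*\sF\to\sF$ for quasi-affine $f$ gives it directly, without that hypothesis.

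In your sketch of the converse, however, the claim that faithfulness of $\sE_x$ is an open condition is false, so the finite-subcover argument in Step 1 fails.

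\emph{Counterexample.} Let $O=\Gm\rtimes\Z/2\subset\GL_2$ act on $\bA^2\smallsetminus 0$ by $\lambda\cdot(x,y)=(\lambda x,\lambda^{-1}y)$ and $\sigma(x,y)=(y,x)$, and set $\cX=[(\bA^2\smallsetminus 0)/O]$.
\begin{enumerate}
\item Since $\GL_2/O$ is affine, $\cX\simeq[X/\GL_2]$ with $X=(\bA^2\smallsetminus 0)\times\GL_2/O$ quasi-affine. So $\cX$ has affine stabilizers and the resolution property.
\item Where $xy\neq 0$ the stabilizer is $\{1,(x/y)\sigma\}\simeq\Z/2$. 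Where $xy=0$ it is trivial.
\item Hence for $\sE=\sO_\cX$ the faithful locus is $\{xy=0\}$, which is closed and not open. Stabilizers can drop under specialization.
\end{enumerate}

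Also, the embedding $G_x\hookrightarrow\GL_m$ by itself produces no bundle on $\cX$. A bundle faithful at $x$ comes from resolving the pushforward of a faithful $G_x$-representation from the residual gerbe.

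\emph{Fixing Step 1.} In the noetherian case you can use the descending chain condition on the closed subgroups $\ker(I_\cX\to\GL(\sE))$ of the inertia. These shrink under direct sums, and a minimal one has trivial fibres. Alternatively you can skip Step 1: a tensor generator automatically has a quasi-affine, in particular representable, frame bundle.

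\emph{The crux is not supplied.} Your sketch does not show that a single $\sE$ generates, and the way you combine the references does not work. Totaro's theorem is stated for \emph{normal} noetherian stacks, and dropping normality is part of what Gross proves. So ``approximate by noetherian stacks and apply Totaro'' is not a valid reduction, because noetherian approximations need not be normal.
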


In particular, we can always write $\cY=[Y/\GL_n]$ where $Y$ is a smooth quasi-affine scheme.
We will denote by $\cX= [X/\GL_n] \subseteq\cY= [Y/\GL_n]$ the center of the blow up, and by
$$
\widetilde{\cX}= \big[ \tX/\GL_n \big] \subseteq \big[ \tY/ \GL_n \big]= \widetilde{\cY}
$$
the exceptional divisor. We will always assume that $\cX$ is smooth. Here, $\tY$ is the weighted blowup of $Y$ along $X$, with exceptional divisor $\tX$; the above equalities follow from the fact that the formation of weighted blowups commutes with flat base change~\cite[Corollary 3.2.14]{quek-rydh-weighted-blowup}. Note that, even if $\widetilde{Y}$ is not a scheme, we are still using a roman letter to refer to it.

\begin{Rmk}
    The spaces $X$, $Y$, $\cX$, $\widetilde{\cY}$, and $\widetilde{\cX}$ all satisfy the resolution property.

    Indeed, the assertion for $X$ and $\cX$ follows from Theorem~\ref{thm: basics on resolution property}. On the other hand, by Equations \eqref{eqn: weighted-blowup_pres} and \eqref{eqn: GmXGLn-action-def-normal-cone}, the weighted blowup $\widetilde{Y}$ (resp.\ $\widetilde{\cY}$) is a quotient of a smooth Noetherian separated scheme by $\G_m$ (resp.\ $\G_m \times \GL_n$), so \cite[Theorem 5.7]{Thomason-equivariantKtheory} implies that $\widetilde{\cY}$, and thus $\widetilde{\cX}$ as well, satisfies the resolution property.
\end{Rmk}

The reason for working with spaces satisfying the resolution property is to obtain an isomorphism between the K-theory of coherent sheaves and that of locally free sheaves.

\section{Basics on Weighted Blowups and K-theory}\label{sec:weighted-blowups-and-K-theory}
\subsection{Weighted Blowups}\label{sub:weighted-blowups}

Below it will be provided a brief introduction to weighted blowups and of the properties that will be used in the rest of the paper. This section is mostly an extract of the work of Quek and Rydh in \cite{quek-rydh-weighted-blowup}.

Weighted blowups preserve much of the intuition of classical blowups, as they are a birational transformation that replace a given center with a Cartier divisor and it is an isomorphism outside such locus. Many of the usual properties of classical blowups are preserved in a natural way, provided one uses the appropriate generalizations of weighted closed embedding, weighted normal cones, and weighted projective stack bundle, which are defined below.

Before going to the definition used in this paper, we explicitly compute the weighted blowup of $0$ in $\bA^d$, which is the local model of regular weighted blowups.

\subsubsection{Blowup of $0$ in $\mathbb{A}^d$}\label{sub:blowup-0-An}

Let us consider the special case of the weighted blowup of the origin in $\A^d$ with not necessarily distinct weights $\mathbf{w}=(w_1,\dots,w_d)$.

We have a particularly convenient description for the weighted blowup using coordinates. Indeed, setting
$$D_0\bA^d:= \spec\left(\frac{k[x_1,\dots,x_d][s,x_1',\dots,x_d']}{(x_i-s^{w_i}x_i')_{i=1,\dots, d}}\right) \simeq \bA^{d+1},$$
where we assign degree $-1$ to the coordinate $s$ and degree $w_i$ to the coordinate $x_i'$, we can describe the blowup as 
$$Bl_0^\mathbf{w}\bA^d\simeq \Proj\left(\frac{k[x_1,\dots,x_d][s,x_1',\dots,x_d']}{(x_i-s^{w_i}x_i')_{i=1,\dots,d}}\right)=[D_0\bA^d\smallsetminus V(x_1',\dots,x_d')/\Gm]$$
with a natural morphism $Bl_0^\mathbf{w}\bA^d \rightarrow \mathbb{A}^d$ induced by the morphism 
$$
k[x_1,\dots,x_d] \subset \frac{k[x_1,\dots,x_d][s,x_1',\dots,x_d']}{(x_i-s^{w_i}x_i')_{i=1,\dots,d}}
$$ 
of graded algebras. The affine scheme $D_0\bA^d$ coincides with the deformation to the (weighted) normal cone. See below for the general definition of the weighted normal cone. 

Moreover, the exceptional divisor is the principal divisor defined by s=0 and $$V(s)\simeq [\spec(k[x_1',\dots x_d'])\smallsetminus V(x_1',\dots x_d')/\Gm ]\simeq [\bA^d\smallsetminus 0/\Gm] \simeq \cP( w_1, \dots w_d)$$ is a  weighted projective stack. Note that $V(s)$ is not the fiber over $0$ (unless $w_i=1$ for every $i$), rather its reduced structure.

We also have an inclusion of graded algebras
$$ k[x'_1,\dots,x'_d] \subset  \frac{k[x_1,\dots,x_d][s,x_1',\dots,x_d']}{(x_i-s^{w_i}x_i')_{i=1,\dots,d}} $$
and, taking $\Proj$, we get the morphism
$$p:Bl_0^{\bold{w}} \A^d \longrightarrow \cP(w_1,\dots,w_d)$$ and the exceptional divisor $\cP(w_1,\dots,w_d) \subset Bl_0^{\bold{w}} \A^d$ gives us a section of $p$. Similarly to the classical case, the morphism $p$ corresponds to $\sO_{\cP(w_1,\dots,w_d)}(-1)$ and the exceptional divisor corresponds to the $0$-section of the line bundle.

\subsubsection{Working Definitions}

The theory of weighted blowups has been collected and formalized in \cite{quek-rydh-weighted-blowup}. Since we are only interested in regular weighted blowups, we are going to give a different definition, more oriented towards the $K$-theoretical computations. The equivalence of the two definitions can be found in \cite[Proposition 2.18]{italians}. Moreover, we are going to introduce the main differences between weighted and classical blowups the reader have to keep in mind.
 
\begin{Def}\label{def:weighted-blowup}
    Given a regular closed embedding $\cX\subset\cY$ of algebraic stacks of codimension $d$, a regular weighted blowup of $\cY$ in $\cX$ with weights $\mathbf{w}\in \bN^d$ is the datum of a morphism $f:\widetilde{\cY}\rightarrow\cY$ and a Cartier divisor $\widetilde{\cX}\subset\widetilde{\cY}$ such that, smooth locally on $\cY$, it is the pullback of the weighted blowup of $0$ in $\bA^d$. More precisely, there exists a smooth surjective morphism $\pi:\cY'\rightarrow\cY$ and a flat morphism $p:\cY'\rightarrow\bA^d $ such that 
    \begin{itemize}
        \item $p^{-1}(0) = \pi^{-1} (\cX)$ ,
        \item $p^{-1}(Bl_0^\mathbf{w}\bA^d,V(s)) \simeq \pi^{-1} (\widetilde{\cY},\widetilde{\cX})$.
    \end{itemize}
\end{Def}

\begin{Rmk}\label{rmk: blowus are lci}
    Note that the definition forces $f$ to be proper, of finite presentation and lci; furthermore, $f(\widetilde{\cX})=\cX$ and $f$ is indeed an isomorphism over the complement of $\cX$. Moreover, if $\cY$ and $\cX$ are smooth, then the same is true for $\widetilde{\cY}$ and $\widetilde{\cX}$, since in this case $p$ is smooth around $0$. Finally, the weighted blowup is unique up to isomorphism thanks to \cite[Theorem 3.2.9]{quek-rydh-weighted-blowup}.
\end{Rmk}

\begin{Rmk}\label{rem:def-weighted-closed-embedding}
    In the notation of Definition~\ref{def:weighted-blowup}, we can actually construct a weighted closed embedding as in \cite[Definition 4.1.1]{quek-rydh-weighted-blowup}: indeed, in the proof of \cite[Proposition 2.18]{italians}, the authors use the Cartier divisor $\widetilde{\cX}\subset\widetilde{\cY}$ to produce nested ideals $I_m:=f_*\sO_{\widetilde{\cY}}(-m\widetilde{\cX})\subset\sO_{\cY}$. The sequence $\cX_{\bullet}:=V(I_{\bullet})$ is a weighted closed embedding, and it is in fact regular as~\cite[Definition 5.1.3]{quek-rydh-weighted-blowup}. Moreover, we can identify $\widetilde{\cY}$ with $\Proj_{\cY} (R_{I_{\bullet}})$ where $R_{I_{\bullet}}:=\bigoplus_{m \in \bN} I_m$ is a Rees algebra as in \cite[Definition 3.1.1]{quek-rydh-weighted-blowup}. In the same way, $\widetilde{\cX}$ can be identified with $\Proj_{\cX}(\sC_{I_{\bullet}})$ where $\sC_{I_{\bullet}}:=\bigoplus_{m} (I_m/I_{m+1})$ is the weighted conormal algebra. Note that we have a surjective morphism $\sC_{I_{\bullet}} \rightarrow\sO_{\cX}$ of quasi-coherent (graded) commutative $\sO_{\cX}$-algebras, which gives us the zero section $\sigma_{I_{\bullet}}$ of the structural morphism $\cC_{I_{\bullet}}:=\spec_{\cX}\sC_{I_{\bullet}} \rightarrow\cX$. We will refer to the algebraic stack $\cC_{I_{\bullet}}$ (resp. $C_{I_\bullet}$) as the weighted normal cone of $\cX$ in $\cY$ (resp. $X$ in $Y$).
\end{Rmk}

\begin{Rmk}\label{rem:natural-grading}
    In this remark, we explain why a weighted blowup determines ``weighted'' objects, such as the twisted weighted normal cone or the weighted normal bundle. In the notation of Remark~\ref{rem:def-weighted-closed-embedding}, we have the following factorization of the blowup morphism
    $$ \tcY \simeq \Proj_{\cY}(R_{I_{\bullet}})\simeq \left[ \spec R_{I_{\bullet}} \setminus V(R_+)/\Gm \right] \rightarrow \cY \times B \G_m \rightarrow \cY$$
    where $V(R_+)$ is the closed substack defined by the ideal $R_+:=\oplus_{m>0}I_m$. More generally, whenever we refer to \emph{weighted} objects (e.g. weighted normal cone, (twisted) weighted vector bundles, weighted projective bundles, etc...) we imply that these objects live over the product with $B\Gm$, thus possessing a natural grading. 
\end{Rmk}

\begin{Rmk}\label{rem:twisted-weighted}
    We want to stress that $\cC_{I_{\bullet}}$ in general is not a vector bundle on $\cX$, in contrast with what happens in the classical case. Nevertheless, $\cC_{I_{\bullet}}$ is a twisted weighted vector bundle as defined in \cite[Definition 2.1.3]{quek-rydh-weighted-blowup}: this follows from \cite[Proposition 5.1.4]{quek-rydh-weighted-blowup}. More specifically, it is smooth-locally a vector bundle, but not globally (thus the word \emph{twisted}).
\end{Rmk}

Thanks to Remark~\ref{rem:def-weighted-closed-embedding}, we can define an important actor related to the weighted regular embedding. See \cite[Definition 4.2.1]{quek-rydh-weighted-blowup}.
\begin{Def}\label{def:weighted-conormal-sheaf}
     In the notation of Remark~\ref{rem:def-weighted-closed-embedding}, let $(f:\widetilde{\cY}\rightarrow\cY,\widetilde{\cX})$ be a regular weighted blowup of $\cY$ along $\cX$ and $\sC_{I_{\bullet}}$ the weighted conormal algebra on $\cX$ with section $\sigma_{I_{\bullet}}:\sC_{I_{\bullet}} \rightarrow\sO_{\cX}$. We denote by $\sN_{\cX/\cY}^{\vee}$ the weighted conormal bundle associated to $\sigma_{I_{\bullet}}$, i.e. the conormal sheaf $J_{\sigma}/J_{\sigma}^2$ on $\cX$ associated to the ideal $J_\sigma$ of the closed embedding $\sigma_{I_{\bullet}}$. 
\end{Def}

Note that if all the weights are $1$, then $\sN_{\cX/\cY}^{\vee}=I/I^2$ where $I=I_1$ is the ideal associated to the closed embedding $\cX\subset\cY$. Although in the weighted case $\sN_{\cX/\cY}^{\vee}$ is still a vector bundle on $\cX$, it is not necessarily isomorphic to $I/I^2$. The following remark explains how to relate the two objects. See \cite[Remark 5.1.5]{quek-rydh-weighted-blowup} for a more detailed discussion.

\begin{Rmk}
Note that $\sN_{\cX/\cY}^{\vee}$ has a natural grading induced by the one of $\sC_{I_{\bullet}}$, thus it is a graded vector bundle on $\cX$. Moreover, the vector bundle $I/I^2$ on $\cX$ has a filtration $F^{\bullet} \subset (I/I^2)$ where $F^{m}:=(I_m+I^2)/I^2$. There is a natural morphism of graded vector bundles
$$ \sN_{\cX/\cY}^{\vee} \longrightarrow \Gr_{F}(I/I^2)$$
which is in fact an isomorphism (because we are in the regular weighted embedding case). In the classical case, we have that $F^m=0$ for $m>1$ because the algebra $\sC_{I_{\bullet}}$ is generated in degree $1$, thus we have the isomorphism $\sN_{\cX/\cY}^{\vee} \simeq I/I^2$. 
\end{Rmk}

Before talking about the deformation to the weighted normal cone, we point the attention of the reader to another difference between weighted and classical blowups.
\begin{Rmk}\label{rem:fiber-prod-vs-exceptional}
In contrast with the classical case, the exceptional divisor is not a fiber product. That is, the diagram 
$$\begin{tikzcd}
\widetilde{\cX} \arrow[r, "j"] \arrow[d, "g"] & \widetilde{\cY} \arrow[d, "f"] \\
\cX \arrow[r, "i"] & \cY
\end{tikzcd}$$ is commutative but not Cartesian. Nevertheless, the map $\widetilde \cX \ra \cX\times_{\cY} \widetilde \cY$ is a (surjective) closed embedding, identifying $\widetilde{\cX}$ with the reduced underlying substack of $\cX\times_{\cY}\widetilde{\cY}$. Thus, the closed embedding will induced an isomorphism in $K$-theory. 
\end{Rmk}

Finally, it is of great importance to describe the blowup as a quotient of the deformation to the weighted normal cone $D_{\cX_\bullet}\cY$, where $\cX_{\bullet}$ is the weighted closed embedding in $\cY$ induced by the sequence of nested ideals $I_{\bullet}$ induced by the weighted blowup (see Remark~\ref{rem:def-weighted-closed-embedding}). In this paper we will not go over the construction $D_{\cX_\bullet}\cY$ done in \cite[\S4.3]{quek-rydh-weighted-blowup} but we will rather describe its properties. An explicit description of the deformation to the weighted normal cone for the weighted blowup of $0$ in $\bA^d$ can be found in \S\ref{sub:blowup-0-An}. 

Much like the classical deformation to the normal cone, we have a flat map $D_{\cX_\bullet}\cY \ra \bA^1$ (illustrated in Figure \ref{fig:deformation}) such that: 
\begin{itemize}
    \item The fiber over $s=0$ is isomorphic to $(D_{\cX_\bullet}\cY)_0 \simeq \cC_{I_{\bullet}}$;
    \item The fiber over $s \neq 0$ is isomorphic to $(D_{\cX_\bullet}\cY)_s\simeq \cY$;
    \item There is a closed embedding $\cX\times \bA^1 \hookrightarrow D_{\cX_\bullet}\cY$ such that $\cX \times \{0\} \hookrightarrow (D_{\cX_\bullet}\cY)_0 $ is the zero section $\sigma_{I_{\bullet}}$ and $\cX\times \{s\} \hookrightarrow (D_{\cX_\bullet}\cY)_s $ is the immersion $i:\cX\hookrightarrow\cY$ for $s\neq 0$. 
    \item There is a $\Gm$-action on $D_{\cX}\cY$ such that the structural morphism $D_{\cX}\cY \rightarrow \bA^1$ is $\Gm$-equivariant, where $\Gm$ acts on $\bA^1$ with weight $1$. Therefore, we get an induced $\Gm$-action on $(D_{\cX_\bullet}\cY)_0\simeq \cC_{I_{\bullet}}$ and it coincides with the natural grading on $\sC_{I_{\bullet}}$.
\end{itemize}

\begin{figure}[h]
\resizebox{.5\textwidth}{!}{

\tikzset{every picture/.style={line width=0.75pt}} 

\begin{tikzpicture}[x=0.75pt,y=0.75pt,yscale=-1,xscale=1]

\draw   (152.25,218.03) -- (151.87,83.79) -- (218,43) -- (218.37,177.24) -- cycle ;
\draw    (218,43) .. controls (290,86) and (359,65) .. (428,54) ;
\draw    (152.25,218.03) .. controls (234,177) and (338,177) .. (395,198) ;
\draw  [dash pattern={on 0.84pt off 2.51pt}]  (218.37,177.24) .. controls (221.49,173.6) and (264,172) .. (285,173) .. controls (306,174) and (363.12,178.91) .. (378,185) ;
\draw [color={rgb, 255:red, 208; green, 2; blue, 27 }  ,draw opacity=1 ]   (168,158) -- (198,138) ;
\draw [color={rgb, 255:red, 208; green, 2; blue, 27 }  ,draw opacity=1 ]   (400,158) -- (430,138) ;
\draw [color={rgb, 255:red, 208; green, 2; blue, 27 }  ,draw opacity=1 ]   (168,158) -- (400,158) ;
\draw [color={rgb, 255:red, 208; green, 2; blue, 27 }  ,draw opacity=1 ]   (198,138) -- (430,138) ;
\draw   (419.06,196.98) .. controls (390.88,208.01) and (368.02,185.41) .. (367.99,146.49) .. controls (367.96,107.57) and (390.78,67.07) .. (418.96,56.04) .. controls (447.14,45) and (470.01,67.6) .. (470.04,106.52) .. controls (470.06,145.44) and (447.24,185.94) .. (419.06,196.98) -- cycle ;
\draw [color={rgb, 255:red, 74; green, 144; blue, 226 }  ,draw opacity=1 ]   (322,152) -- (298,152) -- (283,152) ;
\draw [shift={(281,152)}, rotate = 360] [color={rgb, 255:red, 74; green, 144; blue, 226 }  ,draw opacity=1 ][line width=0.75]    (4.37,-1.32) .. controls (2.78,-0.56) and (1.32,-0.12) .. (0,0) .. controls (1.32,0.12) and (2.78,0.56) .. (4.37,1.32)   ;
\draw [color={rgb, 255:red, 74; green, 144; blue, 226 }  ,draw opacity=1 ]   (322,145) -- (282,145) ;
\draw [shift={(280,145)}, rotate = 360] [color={rgb, 255:red, 74; green, 144; blue, 226 }  ,draw opacity=1 ][line width=0.75]    (4.37,-1.32) .. controls (2.78,-0.56) and (1.32,-0.12) .. (0,0) .. controls (1.32,0.12) and (2.78,0.56) .. (4.37,1.32)   ;
\draw [color={rgb, 255:red, 74; green, 144; blue, 226 }  ,draw opacity=1 ]   (194,134) -- (198.77,91.99) ;
\draw [shift={(199,90)}, rotate = 96.48] [color={rgb, 255:red, 74; green, 144; blue, 226 }  ,draw opacity=1 ][line width=0.75]    (4.37,-1.32) .. controls (2.78,-0.56) and (1.32,-0.12) .. (0,0) .. controls (1.32,0.12) and (2.78,0.56) .. (4.37,1.32)   ;
\draw [color={rgb, 255:red, 74; green, 144; blue, 226 }  ,draw opacity=1 ]   (185,141) -- (181.13,80) ;
\draw [shift={(181,78)}, rotate = 86.37] [color={rgb, 255:red, 74; green, 144; blue, 226 }  ,draw opacity=1 ][line width=0.75]    (4.37,-1.32) .. controls (2.78,-0.56) and (1.32,-0.12) .. (0,0) .. controls (1.32,0.12) and (2.78,0.56) .. (4.37,1.32)   ;
\draw [color={rgb, 255:red, 74; green, 144; blue, 226 }  ,draw opacity=1 ]   (177,146) -- (169.53,118.93) ;
\draw [shift={(169,117)}, rotate = 74.58] [color={rgb, 255:red, 74; green, 144; blue, 226 }  ,draw opacity=1 ][line width=0.75]    (4.37,-1.32) .. controls (2.78,-0.56) and (1.32,-0.12) .. (0,0) .. controls (1.32,0.12) and (2.78,0.56) .. (4.37,1.32)   ;
\draw [color={rgb, 255:red, 74; green, 144; blue, 226 }  ,draw opacity=1 ]   (276.52,103.54) .. controls (290.95,117.34) and (317.6,121.9) .. (334,119) ;
\draw [shift={(275,102)}, rotate = 46.97] [color={rgb, 255:red, 74; green, 144; blue, 226 }  ,draw opacity=1 ][line width=0.75]    (4.37,-1.32) .. controls (2.78,-0.56) and (1.32,-0.12) .. (0,0) .. controls (1.32,0.12) and (2.78,0.56) .. (4.37,1.32)   ;
\draw [color={rgb, 255:red, 74; green, 144; blue, 226 }  ,draw opacity=1 ]   (262.49,112.41) .. controls (279.03,127.17) and (311.63,129.88) .. (332,126) ;
\draw [shift={(261,111)}, rotate = 45] [color={rgb, 255:red, 74; green, 144; blue, 226 }  ,draw opacity=1 ][line width=0.75]    (4.37,-1.32) .. controls (2.78,-0.56) and (1.32,-0.12) .. (0,0) .. controls (1.32,0.12) and (2.78,0.56) .. (4.37,1.32)   ;
\draw    (119,240.5) -- (479,239.5) ;
\draw    (153,245.5) -- (153,235.5) ;

\draw (163,33.4) node [anchor=north west][inner sep=0.75pt]    {$C_{I_{\bullet}}$};
\draw (402,161.4) node [anchor=north west][inner sep=0.75pt]    {$X$};
\draw (170,161.4) node [anchor=north west][inner sep=0.75pt]    {$X$};
\draw (468,53.4) node [anchor=north west][inner sep=0.75pt]    {$\mathrm{Y}$};
\draw (139,249.4) node [anchor=north west][inner sep=0.75pt]  [font=\footnotesize]  {$s=0$};
\draw (229,78.4) node [anchor=north west][inner sep=0.75pt]  [font=\small]  {$\textcolor[rgb]{0.29,0.56,0.89}{\mathbb{G}_{m} \circlearrowright }$};

\end{tikzpicture}
}

\label{fig:deformation}
\end{figure}

We can obtain the weighted blowup as 
\begin{equation}\label{eqn: weighted-blowup_pres}
    \widetilde\cY \simeq \left[\left( D_{\cX_\bullet}\cY \smallsetminus (\cX \times \bA^1)\right)/\Gm\right].
\end{equation}

\begin{Rmk}\label{rem:blowup-of-def-cone}
    The weighted (regular) embedding $\cX_{\bullet} \subset \cY$ induces a weighted (regular) embedding $\cX_{\bullet} \times \A^1 \subset D_{\cX_{\bullet}}\cY$ which in turn gives rise to a weighted blowup $b_D: \widetilde{\cD} \rightarrow D_{\cX_{\bullet}}\cY$ on $\bA^1$. Since the sequence of closed substacks $\cX_{\bullet} \times \A^1$ is flat over $\A^1$, restricting over a point $t \in \A^1$ commutes with the (weighted) blowup construction, therefore we get 
    \begin{itemize}
        \item if $t\neq 0$, then $(b_D)_t$ coincides with the weighted blowup $\widetilde{\cY} \rightarrow \cY$ of $\cX$ in $\cY$;
        \item if $t=0$, then $(b_D)_0$ coincides with the weighted blowup of $\cX$ in the normal cone $\cC_{I_{\bullet}}$ as the (weighted) zero section. 
    \end{itemize}
\end{Rmk}
\subsection{K-theory}\label{sub:basics-K-theory}

In this subsection we review the main definitions and properties of algebraic K-theory that are essential for the rest of the paper. For a more detailed exposition, see the foundational papers~\cite{Thomason-LefschetzRR,Thomason-equivariantKtheory,Thomason-blowup}, or the more recent~\cite{Merkurjev-equivariantKtheory,AndersonPayne-OperationalKtheory}.

Throughout the subsection, $X$ will always denote a separated scheme of finite type over $k$, and $G$ a smooth affine finite type group scheme over $k$ acting on $X$.

\subsubsection{Basic Definitions}\label{sec: basic-def-K-theory}

We denote by $\Kgroup^G(X)$ the Grothendieck group of $G$-equivariant coherent sheaves on $X$. This is generated by classes $[F]$ of $G$-equivariant coherent sheaves subject to the relations $[F]=[F_1]+[F_2]$ for every exact sequence
\[
\begin{tikzcd}
    0\arrow[r] & F_1\arrow[r] & F\arrow[r] & F_2\arrow[r] & 0
\end{tikzcd}
\]
of equivariant coherent sheaves. The group operation is induced by the direct sum.

Similarly, we denote by $K_G^{\circ}(X)$ the Grothendieck group generated by $G$-equivariant perfect complexes, subject to the same relations. The derived tensor product endows $\Kring_G(X)$ of a ring structure and makes $\Kgroup^G(X)$ into a $\Kring_G(X)$-module.

The formation of $\Kgroup^G$ is covariant for $G$-equivariant proper morphisms $f:X\rightarrow Y$, with $f_*:\Kgroup^G(X)\rightarrow\Kgroup^G(Y)$ induced by
\[
    f_*[F]=\sum_{i}(-1)^i[\mathrm{R}^if_*F].
\]

Moreover, every $G$-equivariant morphism $f:X \to Y$ induces $f^*:\Kring_G(Y)\rightarrow\Kring_G(X)$ via the derived pullback.
For proper morphisms $f$, there is a projection formula (\cite[\href{https://stacks.math.columbia.edu/tag/01E8}{Tag 01E8}]{stacks-project})
\begin{equation}\label{eqn: Projection Formula}
f_*( f^*(\alpha) \cdot \beta)= \alpha \cdot f_*(\beta) \ \text{for} \ \alpha \in \Kring_G(Y) \ \text{and} \  \beta \in \Kgroup^G(X).
\end{equation}
In particular, the pushforward $f_*:\Kgroup^G(X) \to \Kgroup^G(Y)$ is a homomorphism of $\Kring_G(Y)$-modules.

When $f$ is flat, the pullback $f^*: \Kring_G(Y)\rightarrow\Kring_G(X)$ coincides with the usual pullback of (a representative of) the complex. In fact, if $f$ is flat there is also a flat pullback $f^*:\Kgroup^G(Y) \to \Kgroup^G(Y)$ defined in the same way.

When $X=\spec k$, we have natural isomorphisms
\[
    \Kgroup^G(\spec k)\simeq\Kring_G(\spec k)\simeq R(G),
\]
where $R(G)$ is the representation ring of $G$. In particular, for any $X$, the flat pullback along $X\rightarrow\spec k$ makes $\Kring_G(X)$ into an $R(G)$-algebra and $\Kgroup^G(X)$ into an $R(G)$-module.

\begin{Rmk}\label{rmk: vector bundles vs locally-free sheaves}
    Sometimes it is convenient to identify a locally-free sheaf $\sE$ on $X$ with its associated geometric vector bundle $E=\spec\Sym^{\bullet}(\sE^\vee)$. Here, $\sE$ is recovered from $E$ as the sheaf of sections of the structure map $E\rightarrow X$. Then, we set $[E]:=[\sE]\in\Kring_G(X)$.
\end{Rmk}

\subsubsection{Comparison Isomorphisms} \label{sec: comparison isomorphisms}

In general, the two K-theory groups defined in \S\ref{sec: basic-def-K-theory} are not isomorphic, and both differ from the Grothendieck group $K_G^{\mathrm{vect}}(X)$ of $G$-equivariant vector bundles. However, they coincide under suitable assumptions, that we recall now.

When $X$ is equivariantly embeddable in a regular Noetherian separated scheme, then $\Kring_G(X)\simeq K_G^{\mathrm{vect}}(X)$, as every equivariant coherent sheaf on $X$ can then be equivariantly resolved by a (possibly infinite) complex of locally-free sheaves, by~\cite[\S2]{Thomason-EquivariantResolution}. See also \S\ref{sec: notation-resolution}.

If $X$ is itself smooth, then the forgetful map $\Kring_G(X)\rightarrow\Kgroup^G(X)$ is an isomorphism of $\Kring_G$-modules, as in this case every equivariant coherent sheaf on $X$ admits a \emph{finite} resolution by locally-free sheaves, see~\cite[Theorem 5.7]{Thomason-equivariantKtheory}. In this case, $\Kring_G$ becomes a covariant functor via pushforward with respect to equivariant proper morphisms.

In general, the group $\Kgroup^G(X)$ can be interpreted as the Grothendieck group of coherent sheaves on the quotient stack $[X/G]$. In particular, when the action of $G$ on $X$ is free with quotient an algebraic space $X/G$, there is a natural isomorphism $\Kgroup^G(X)\simeq\Kgroup(X/G)$. This also shows that if $H$ is another algebraic group over $k$ acting on $Y$, and $[X/G]\simeq[Y/H]$, then $\Kgroup^G(X)\simeq\Kgroup^H(Y)$. The same discussion applies to $\Kring$.

\subsubsection{Representation Rings and Splitting Principle} \label{sec: representation rings}

We present and compare two simple examples of representation rings that will play a role in the paper.

The simplest is the representation ring of split tori. Denote by $t$ the class of the geometric standard representation of $\Gm$, or equivalently the sheaf of sections of the universal line bundle $[\A^1/\Gm]\rightarrow B\Gm$, where the action is with weight 1. Then,
\begin{equation}\label{eq: representation ring tori}
    \Kgroup^{\Gm^n}(\spec k)\simeq\Kring_{\Gm^n}(\spec k)\simeq R(\Gm^n)\simeq\Z[t_1^{\pm1},\ldots,t_n^{\pm n}],
\end{equation}
where $t_i$ is the pullback of $t$ along the $i$-th projection $B\Gm^n\rightarrow B\Gm$ (\cite[Theorem 12.12, \S12.e]{Milne_algebraicgroups}).

The second example is the representation ring of $\GL_n$, which can be described as follows. Let $V$ be the geometric standard representation of $\GL_n$, and set $e_i=[\Lambda^iV]$. Then, over any field,
\begin{equation}\label{eq: representation ring GLn}
    \Kring(B\GL_n)=R(\GL_n)\simeq\Z[e_1,\ldots,e_{n-1},e_n^{\pm 1}].
\end{equation}
Over the complex numbers, this is~\cite[Exercise 23.36 (d)]{FultonHarris-RepresentationTheory}. In general, the above isomorphism follows from equation~\eqref{eq: representation ring tori} and~\cite[Theorem 22.38]{Milne_algebraicgroups}. The morphism $R(\GL_n)\rightarrow R(\Gm^n)$ to the representation ring of the maximal torus of $\GL_n$ consisting of diagonal matrices sends $e_i$ to the symmetric function of degree $i$ in the classes $t_1,\ldots,t_n$.

\begin{Lemma}\label{lemma: basis R(GL)}
    $R(\Gm^n)$ is a free $R(\GL_n)$-module of finite rank.
\end{Lemma}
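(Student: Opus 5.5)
The plan is to use the explicit descriptions \eqref{eq: representation ring tori} and \eqref{eq: representation ring GLn} and reduce the statement to a classical fact about symmetric polynomials. Under the restriction map $R(\GL_n)\rightarrow R(\Gm^n)$, the ring $R(\GL_n)$ is identified with the subring
\[
\Z[e_1,\ldots,e_{n-1},e_n^{\pm1}]\subset \Z[t_1^{\pm1},\ldots,t_n^{\pm1}],
\]
where $e_i$ is the $i$-th elementary symmetric polynomial in the $t_j$. This subring is exactly the ring of $S_n$-invariants. The first step is to check that this identification is injective and that its image is as stated; this follows from \cite[Theorem 22.38]{Milne_algebraicgroups}, which is already quoted in the text.

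The second step is to invert $e_n=t_1\cdots t_n$. Since $t_i^{-1}=e_n^{-1}\prod_{j\neq i}t_j$, we get
\[
\Z[t_1^{\pm1},\ldots,t_n^{\pm1}]=\Z[t_1,\ldots,t_n][e_n^{-1}]=\Z[t_1,\ldots,t_n]\otimes_{\Z[e_1,\ldots,e_n]}\Z[e_1,\ldots,e_n][e_n^{-1}].
\]
Hence it suffices to show that $\Z[t_1,\ldots,t_n]$ is a free $\Z[e_1,\ldots,e_n]$-module of finite rank: freeness and rank are preserved under the base change $\Z[e_1,\ldots,e_n]\to\Z[e_1,\ldots,e_{n-1},e_n^{\pm1}]$.

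The third step is the classical theorem (Artin's theorem over $\Z$) that $\Z[t_1,\ldots,t_n]$ is free over the ring of symmetric polynomials, with basis the monomials $t_1^{a_1}\cdots t_n^{a_n}$ satisfying $0\le a_i\le i-1$. There are $n!$ such monomials. I would prove this by induction on $n$ via the tower
\[
\Z[e_1,\ldots,e_n]\subset \Z[e_1,\ldots,e_n][t_n]\subset\cdots\subset\Z[t_1,\ldots,t_n].
\]
At each stage, $t_k$ is a root of a monic polynomial of degree $k$ whose coefficients are elementary symmetric functions in $t_1,\ldots,t_k$, and those functions are expressible over the previous stage. A monic generator gives a free module with basis $1,t_k,\ldots,t_k^{k-1}$, and composing these free extensions gives the result.

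The main obstacle is making this tower argument rigorous over $\Z$. One has to verify that each intermediate ring is a polynomial ring in the appropriate elementary symmetric functions, so that the adjoined variable satisfies a monic polynomial and nothing smaller. If this becomes cumbersome, I would instead cite the standard reference for the freeness of polynomial rings over symmetric polynomials, or argue via a graded Nakayama-type argument: $\Z[t]$ is finite over the symmetric polynomials, and the $n!$ monomials generate. Linear independence then follows by comparing ranks after tensoring with $\mathbb{Q}$, using that the field extension $\mathbb{Q}(t)/\mathbb{Q}(t)^{S_n}$ has degree $n!$.
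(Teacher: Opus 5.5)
Your proposal is correct and is essentially the paper's argument: the paper also writes $R(\Gm^n)$ as the base change of $\Z[t_1,\ldots,t_n]$ along $\Z[e_1,\ldots,e_n]\to\Z[e_1,\ldots,e_n][e_n^{-1}]=R(\GL_n)$ (your tensor product over $\Z[e_1,\ldots,e_n]$ is the correct form of the identity the paper displays), and then it simply cites \cite[Lemma 3.1]{CLI} for the freeness of $\Z[t_1,\ldots,t_n]$ over the symmetric polynomials. Your fallback proof of that classical fact is sound and makes the argument self-contained: the $n!$ monomials generate by the easy half of your tower argument, and they are independent because $[\mathbb{Q}(t_1,\ldots,t_n):\mathbb{Q}(e_1,\ldots,e_n)]=n!$.
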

\begin{proof}
    This follows from the identity $R(\Gm^n)\simeq R(\GL_n) \otimes_{\Z[e_1,\ldots,e_{n-1},e_n^{\pm1}]}\Z[t_1,\dots,t_n]$ and \cite[Lemma 3.1]{CLI}.
\end{proof}

More generally, if $G$ is a split reductive group with simply connected commutator group and maximal torus $S$, the restriction of the action to $S$ induces a morphism
$
\Kgroup^{G}(X) \to \Kgroup^{S}(X),
$
and the map
\begin{equation}\label{eqn: iso from Gln to T}
\Kgroup^{G}(X) \otimes_{R(G)} R(S) \longrightarrow \Kgroup^{S}(X)
\end{equation}
is an isomorphism of $R(S)$-modules by \cite[Proposition~31]{Merkurjev-equivariantKtheory}. In particular, the restriction map $
\Kgroup^{G}(X) \to \Kgroup^{S}(X)$ is injective.

This yields a version of the splitting principle in K-theory.

\begin{Prop}[The splitting principle] \label{prop:classical-splitting-principle}
Assume that $G$ is split reductive and with simply connected commutator group, and let $T\simeq\Gm^n$ be the standard maximal torus in $\GL_n$. Fix a $G$-invariant map $X \to B \GL_n$. Then the $G$-equivariant map 
\[
\begin{tikzcd}
    X'=X \times_{B\GL_n} BT \arrow[r] & X
\end{tikzcd}
\]
in the following fiber diagram $$\begin{tikzcd}
X' \arrow[r] \arrow[d] & BT \arrow[d]\\
X \arrow[r] & B\GL_n  \\
\end{tikzcd}$$
induces an injection of K-theories $\Kgroup^G(X) \hookrightarrow \Kgroup^G(X')$ via flat pullback.
\end{Prop}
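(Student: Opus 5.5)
We reduce the statement to the isomorphism \eqref{eqn: iso from Gln to T}, applied not to $G$ but to the group $G\times\GL_n$ and its torus $G\times T$. First we rewrite everything in terms of quotient stacks. A $G$-invariant map $X\to B\GL_n$ is the same datum as a $G$-equivariant $\GL_n$-torsor $P\to X$. Then $[X/G]\simeq[P/(G\times\GL_n)]$, and
\[
X'=X\times_{B\GL_n}BT\simeq P/T,\qquad [X'/G]\simeq[P/(G\times T)].
\]
By the comparison isomorphisms of \S\ref{sec: comparison isomorphisms}, this gives $\Kgroup^G(X)\simeq\Kgroup^{G\times\GL_n}(P)$ and $\Kgroup^G(X')\simeq\Kgroup^{G\times T}(P)$. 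Under these identifications, the flat pullback along $X'\to X$ corresponds to the restriction map $\Kgroup^{G\times\GL_n}(P)\to\Kgroup^{G\times T}(P)$. The reason is that the map of stacks $[P/(G\times T)]\to[P/(G\times\GL_n)]$ is the natural one, and pullback of coherent sheaves along it is restriction of the equivariant structure. We need $P$ to be a separated scheme of finite type. Since $P\to X$ is a $\GL_n$-torsor, it is affine over $X$, and it is represented by the frame bundle of the associated equivariant vector bundle, so this holds.

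Next, when $G$ is itself a split torus (the main case of interest), the statement follows directly. In that case $G\times\GL_n$ is split reductive with simply connected commutator group $\mathrm{SL}_n$, and its maximal torus contains $G\times T$. More precisely, if $S_G$ denotes a maximal torus of $G$, then $S_G\times T$ is a maximal torus of $G\times\GL_n$.

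For general split reductive $G$ with simply connected commutator, we argue in stages:
\[
\Kgroup^{G\times\GL_n}(P)\to\Kgroup^{G\times T}(P)\to\Kgroup^{S_G\times T}(P).
\]
By \eqref{eqn: iso from Gln to T}, the composite is injective, because $G\times\GL_n$ is split reductive with simply connected commutator group $[G,G]\times\mathrm{SL}_n$ and maximal torus $S_G\times T$. Since the composite is injective, its first arrow is injective as well. This first arrow is exactly our pullback map, which proves the claim.

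The main technical point is the identification of the flat pullback $\Kgroup^G(X)\to\Kgroup^G(X')$ with restriction of structure group. This needs care with the torsor description and Morita-type invariance, namely $\Kgroup^{G}(X)\simeq\Kgroup^{G\times \GL_n}(P)$ for a free $\GL_n$-action with quotient $X$, and it must be compatible with pullbacks. For this we invoke the stack-level interpretation of $\Kgroup$ from \S\ref{sec: comparison isomorphisms}. Pulling back along a morphism of quotient stacks induced by a subgroup inclusion is precisely forgetting part of the equivariant structure, so the identification holds.
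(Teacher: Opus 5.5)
Your proof is correct and is essentially the paper's argument: you identify $\Kgroup^G(X)\simeq\Kgroup^{G\times\GL_n}(P)$ and $\Kgroup^G(X')\simeq\Kgroup^{G\times T}(P)$ via the $G$-equivariant $\GL_n$-torsor $P$, and then get injectivity of the first arrow from injectivity of the composite to $\Kgroup^{S_G\times T}(P)$ given by \eqref{eqn: iso from Gln to T}. The separate discussion of the case where $G$ is a torus is already covered by the general argument and can be dropped.
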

\begin{proof}
Let $S$ be the maximal torus of $G$. Let $P$ be the principal $G$-equivariant $\GL_n$-bundle associated to $X\rightarrow B\GL_n$. Then $X\simeq P/\GL_n$ and $X'\simeq P/T$. In particular,
\[
\Kgroup^G(X)\simeq \Kgroup^{\GL_n\times G}(P),\quad\text{and}\quad \Kgroup^G(X')\simeq \Kgroup^{T\times G}(P).
\]
The composition
\[
\Kgroup^{\GL_n \times G}(P) \longrightarrow \Kgroup^{T \times G}(P) \longrightarrow \Kgroup^{T \times S}(P)
\]
is injective by \cite[Proposition 31]{Merkurjev-equivariantKtheory}. In particular, the first map, namely the pullback
\[
\Kgroup^{G}(X) \longrightarrow \Kgroup^{G}(X')
\]
is injective.
\end{proof}

\subsubsection{Excision and Non-reduced Structures} \label{sec:excision}

We are now ready to record some key properties of K-theory that closely parallel those of Chow groups. We begin with the excision property, more commonly referred to as the localization sequence.

Given a $G$-equivariant closed immersion of schemes $Z\hookrightarrow X$, there is an exact sequence
\[
\begin{tikzcd}
    \Kgroup^G(Z)\arrow[r] & \Kgroup^G(X)\arrow[r] & \Kgroup^G(X\smallsetminus Z)\arrow[r] & 0.
\end{tikzcd}
\]
This can be extended on the left by means of higher K-theory. An immediate application of the extended sequence is that $\Kgroup(X)\simeq\Kgroup(X_{\mathrm{red}})$, meaning that K-theory does not detect non-reduced structures. This is the only point where we will use the extended exact sequence.

\subsubsection{Homotopy Invariance}\label{sec: homotopy invariance}

Another useful property is that if $\pi:V\rightarrow X$ is a $G$-equivariant vector bundle over $X$, then $\pi^*$ induces an isomorphism between K-theories. More generally, given a $V$-torsor $f:E\rightarrow X$ with a $G$-action making both the map $f$ and the $V$-action $G$-equivariant, the pullback $f^*:\Kgroup^G(X)\rightarrow\Kgroup^G(E)$ is an isomorphism, see~\cite[Theorem 4.1]{Thomason-equivariantKtheory}.
This can be generalized to twisted weighted vector bundles as follows. First, we need the following lemma, which gives a dual picture to~\cite[Proposition 3.8]{Arena-Obinna}.

\begin{Lemma}\label{lem: factorization of twisted weighted vector bundles}
    Let $\pi:E\rightarrow X$ be a twisted weighted vector bundle with positive weights $w_1<\ldots<w_r$, and let $n_i$ be the dimension of the eigenspace corresponding to the eigenvalue $w_i$ of the $\Gm$-action. Suppose that $G$ acts on both $E$ and $X$ so that $\pi$ is $G$-equivariant, and that the $G$-action on $E$ commutes with the natural $\Gm$-action. Then, there exists a factorization
    \begin{equation}\label{eq: factorization of twisted weighted vector bundles}
        \begin{tikzcd}
            E=Q_r\arrow[r,"\phi_r"] & Q_{r-1}\arrow[r,"\phi_{r-1}"] & \ldots \arrow[r] & Q_1\arrow[r,"\phi_1"] & Q_0=X,
        \end{tikzcd}
    \end{equation}
    and for $1\leq i\leq r$ there exists a $G$-equivariant vector bundle $V_i\rightarrow X$ of rank $n_i$ over $X$ and a $G$-action on $Q_i$, such that:
    \begin{itemize}
        \item the $G$ action on $E$ and $X$ are the ones we started with;
        \item for $1\leq i\leq r$ , the map $\phi_{i}$ is a $G$-equivariant torsor under the vector bundle $V_i\times_{X}Q_{i-1}$ over $Q_{i-1}$, and the $V$-action is $G$-equivariant.
    \end{itemize}
\end{Lemma}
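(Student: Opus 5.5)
Let $\sA=\pi_*\sO_E$ be the graded quasi-coherent $\sO_X$-algebra with $E=\spec_X\sA$. The $\Gm$-action gives the grading, and the commuting $G$-action makes each graded piece $\sA_m$ a $G$-equivariant coherent sheaf, since all weights are positive. For $0\le i\le r$ let $\sA^{(i)}\subseteq\sA$ be the $\sO_X$-subalgebra generated by $\bigoplus_{m\le w_i}\sA_m$, and set $Q_i:=\spec_X\sA^{(i)}$. Then $Q_0=X$ (because $\sA_0=\sO_X$), $Q_r=E$, and the inclusions $\sA^{(i-1)}\subseteq\sA^{(i)}$ give the maps $\phi_i$. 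These subalgebras are $G\times\Gm$-stable, so the $Q_i$ inherit compatible $G$- and $\Gm$-actions and every $\phi_i$ is $G$-equivariant. This is the dual of the filtration by weights used in \cite[Proposition 3.8]{Arena-Obinna}.

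Next I identify $V_i$. Let $\sN^\vee$ be the weighted conormal sheaf of the zero section, i.e.\ the indecomposables $\sA_+/\sA_+^2$, and take its weight-$w_i$ part $\sN^\vee_{w_i}$. By the local structure of twisted weighted vector bundles \cite[Definition 2.1.3]{quek-rydh-weighted-blowup}, this is locally free of rank $n_i$. It is $G$-equivariant because the $G$- and $\Gm$-actions commute. Define $V_i:=\mathbb{V}\big((\sN^\vee_{w_i})^\vee\big)$, the $G$-equivariant vector bundle of rank $n_i$ whose coordinate functions are the sections of $\sN^\vee_{w_i}$.

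The main step is to show that $\phi_i$ is a $V_i\times_XQ_{i-1}$-torsor. I would check this smooth-locally on $X$, where $\sA\simeq\sO_X[x_{1},\dots,x_{N}]$ is a polynomial algebra with homogeneous generators of weights $w_j$. In such a chart $\sA^{(i)}$ is the polynomial algebra on the generators of weight $\le w_i$. Hence $\sA^{(i)}=\sA^{(i-1)}[x_j : \deg x_j=w_i]$, and $Q_i\to Q_{i-1}$ is locally a trivial $\bA^{n_i}$-bundle.

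The twisting means the local generators are only defined up to graded automorphisms, of the form $x_j\mapsto \sum a_{jk}x_k+(\text{polynomials in lower-weight generators})$. So the transition functions on the new weight-$w_i$ coordinates are affine-linear over $\sA^{(i-1)}$: the linear part is a change of basis of $\sN^\vee_{w_i}$, and the translation part lies in $\sA^{(i-1)}$. To make this global, I would define the coaction $\sA^{(i)}\to\sA^{(i)}\otimes_{\sO_X}\Sym^\bullet\sN^\vee_{w_i}$ that is the identity on $\sA^{(i-1)}$ and sends a lift $\tilde f$ of $f\in\sN^\vee_{w_i}$ to $\tilde f\otimes1+1\otimes f$. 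It is well defined because two lifts differ by an element of $\sA^{(i-1)}$, which the coaction fixes. It is $G$-equivariant because every step is canonical. I expect the main obstacle to be this well-definedness together with the torsor axioms. Both reduce to the local chart computation above, and since the action map is defined globally and is an isomorphism smooth-locally, $\phi_i$ is a torsor.
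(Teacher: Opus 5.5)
Your proposal is correct and follows essentially the paper's route. In both, $Q_i$ is the relative spectrum of the subalgebra generated in degrees $\le w_i$, and $\phi_i$ is a torsor because the transition maps on the weight-$w_i$ coordinates are affine-linear over the lower-weight subalgebra. The difference is only in packaging. The paper glues $V_i$ from the cocycle formed by the linear parts $A$ of the transition maps $\underline{y}\mapsto A\underline{y}+P(\underline{x})$. You instead take $V_i=\spec_X\Sym^\bullet(\mathscr{A}_+/\mathscr{A}_+^2)_{w_i}$, an identification the paper itself records in Remark~\ref{rmk: mistake in Arena-Obinna}, and write down a global coaction; this makes the $G$-equivariance of the action, and a uniform treatment of all $\phi_i$, automatic.
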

\begin{proof}
    Let $U\subset X$ be an open subscheme such that
    \[
        E|_U\simeq\spec\sO_{U}[x_1^{(w_1)},\ldots,x_{n_1}^{(w_1)},\ldots,x_1^{(w_r)},\ldots, x_{n_r}^{(w_r)}],
    \]
    where $x_i^{(w_l)}$ has weight $w_l$, for every $i$ and $l$. For every $l<r$, let
    \[
        R^U_l:=\sO_U[x_1^{(w_1)},\ldots,x_{n_1}^{(w_1)},\ldots,x_1^{(w_l)},\ldots,x_{n_l}^{(w_l)}]
    \]
    be the $\sO_U$-sub-algebra generated by elements of degree less than or equal to $w_l$. By construction, for any open subschemes $U_1,U_2\subset X$, the transition functions of $E$ restrict to transition functions $R_l^{U_1}|_{U_2}\xrightarrow{\simeq} R_l^{U_2}|_{U_1}$ that satisfy the cocycle relation. This yields an $\sO_X$-sub-algebra $R_l\subset R$ with $R_l|_U\simeq R_l^U$. Then $Q_l:=\spec_{X}\! R_l\rightarrow X$ is again a twisted weighted vector bundle, with weights $w_1<\ldots<w_l$. Since the $G$-action on $E$ commutes with the $\Gm$-action, it respects the weights and thus restricts to a $G$-action on $Q_l$. This yields the sequence~\eqref{eq: factorization of twisted weighted vector bundles} that $G$-equivariantly factors $E$ into twisted weighted vector bundles over $X$. We are left to prove that there exists a vector bundle $V_r$ over $X$ such that $\phi_r:E\rightarrow Q_{r-1}$ is a torsor under $V_r\times_{X}Q_{r-1}$, whose action map is $G$-equivariant. The same statement for $\phi_i$ can be either proved analogously or by using induction.
    
    For every pair of open subschemes $U_1,U_2\subset X$ that trivialize $E$, let $\underline{x}$ denote the variables with weight less than $w_r$, and $\underline{y}$ the variables with weight $w_r$. Note that $R|_{U_1}\simeq R_{r-1}|_{U_1}[\underline{y}]$, and similarly for $R|_{U_2}$, with the transition map being $\underline{y}\mapsto A\underline{y}+P(\underline{x})$ for a matrix $A$ and polynomial $P$ both with coefficients in $\sO_{U_1\cap U_2}$. Note that $P(\underline{x})$ is a local section of $R_{r-1}$ that defines the affine translation. The cocycle condition on $E$ implies that the matrices $A$ satisfy the cocycle condition with $U_1$ and $U_2$ varying, thus giving a vector bundle $V_r\rightarrow X$ over $X$. This local description shows that $\phi_r:E\rightarrow Q_{r-1}$ is an affine bundle modeled on the pullback $V_r\times_X Q_{r-1}$, making it a $V_r\times_X Q_{r-1}$-torsor. The $V_r\times_X Q_{r-1}$-action on $Q_r$ is compatible with the $G$-action by construction and the fact that the $G$-action on $E$ commutes with the $\Gm$-action.
\end{proof}

\begin{Rmk}\label{rmk: mistake in Arena-Obinna}
    Note that the factorization~\eqref{eq: factorization of twisted weighted vector bundles} we constructed in Lemma~\ref{lem: factorization of twisted weighted vector bundles} is induced by the filtration of the $\sO_X$-algebra $R$ as defined in the proof of~\cite[Proposition 3.8]{Arena-Obinna}. However, in loc.\! cit.\! the authors claim that this provides a filtration of $E$ in twisted weighted sub-bundles rather than a factorization as above, which is not true. Nevertheless, the statement of~\cite[Proposition 3.8]{Arena-Obinna} still holds. That is, there exist weighted vector bundles $E_l$ on $X$ for $l=1,\ldots,r$ and fibrations
    $$
    F_{l-1} \hookrightarrow F_l \to E_{l}
    $$
    where each $F_l$ is a twisted weighted vector bundle on $X$ and such that $F_{r}=E$ and $F_0=X$.    
    
    Since the existence of such a filtration is relevant to this discussion, we sketch a corrected proof of loc.\! cit.\!
    We use the same notation as in the proof of Lemma~\ref{lem: factorization of twisted weighted vector bundles}. Let $U\subset X$ be an open subscheme trivializing $E$, and for every $1\leq l<r$ define
    \[
        \overline{R}^U_l:=\sO_{U}[\overline{x}_1^{(w_{r+1-l})},\ldots,\overline{x}_{n_{r+1-l}}^{(w_{r+1-l})},\ldots,\overline{x}_1^{(w_r)},\ldots,\overline{x}_{n_r}^{(w_r)}]
    \]
    be the $\sO_U$-algebra obtained by taking the quotient of $R|_U$ by the variables $x_i^{(w_i)}$ with weight less than $r+1-l$. By construction, for any $U_1,U_2\subset X$ trivializing $E$, the transition functions on $E$ restrict to isomorphisms $\overline{R}_l^{U_1}|_{U_1 \cap U_2}\xrightarrow{\simeq}\overline{R}_l^{U_2}|_{U_1 \cap U_2}$ satisfying the cocycle relation. This yields a $\sO_X$-algebra $\overline{R}_l$ with a projection $R\rightarrow\overline{R}_l$ such that its restriction to any open $U$ trivializing $E$ coincides with $R|_U\rightarrow\overline{R}_l^U$. Moreover, $F_l:=\spec_{X}\!\overline{R}_l\hookrightarrow E$ is a twisted weighted vector sub-bundle, with $F_0:=X$ and $F_r=E$. We claim that for every $1\leq l\leq r$ there exists a vector bundle $E_l\rightarrow X$ of rank $n_{r+1-l}$ and a fibration $F_{l-1}\hookrightarrow F_l\rightarrow E_l$ over $X$. By induction, it is enough to show this for $l=r$. Let $U\subset X$ trivializing $E$, and set $E_r^U=\spec\sO_U[x_1^{(w_1)},\ldots,x_{n_1}^{(w_1)}]$. By construction, the transition maps for $E$ preserve the $E_r^U$, are linear and satisfy the cocycle relation, thus giving a vector bundle $E_r$ of rank $n_1$ over $X$. Then, $F_{r-1}\hookrightarrow F_r\rightarrow E_r$ is the desired fibration. Note that this is canonically defined; in particular, if $E$ and $X$ admits a $G$-action that commutes with the $\Gm$-action on $E$ and makes $E\rightarrow X$ equivariant, also $F_i$ and $E_i$ admit natural $G$-actions making all morphisms between them $G$-equivariant.

    We remark that the sequence of vector bundles over $X$ obtained from this filtration are the same as those obtained from the factorization Lemma~\ref{lem: factorization of twisted weighted vector bundles}. More precisely, $E_l\simeq V_{r+1-l}$. This follows from the fact that their transition functions are defined by the same matrix.
    Another canonical way of obtaining the weighted vector bundles $E_i$ is to consider the sheaf $\sE:=R_+/R_+^2$, which is a weighted vector bundle, see~\cite[\S2.1.9]{quek-rydh-weighted-blowup}. Then, the $\Gm$-action on $\sE$ induces a splitting $\sE=\oplus\sE_i$, and $E_i=\spec\Sym^{\bullet}\sE_i$. Equivalently, the sheaf of section of $E_i$ is isomorphic to $\sE_i^{\vee}$ and thus $E_i$ is the total space of $\sE_i^{\vee}$.
\end{Rmk}

\begin{Prop}\label{prop: homotopy invariance twisted}
    Let $\pi:E\rightarrow X$ be a twisted weighted vector bundle with positive weights $w_1<\ldots<w_r$. Suppose that $G$ acts on both $E$ and $X$ so that $\pi$ is $G$-equivariant, and that the $G$-action on $E$ commutes with the natural $\Gm$-action. Then, $\pi^*:\Kgroup^G(X)\rightarrow\Kgroup^G(E)$ is an isomorphism.
\end{Prop}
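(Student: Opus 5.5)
The plan is to reduce the statement to the classical homotopy invariance for equivariant affine bundles (\cite[Theorem 4.1]{Thomason-equivariantKtheory}, recalled in \S\ref{sec: homotopy invariance}) by means of the factorization of Lemma~\ref{lem: factorization of twisted weighted vector bundles}. Since the $G$-action on $E$ commutes with the natural $\Gm$-action and $\pi$ is $G$-equivariant, that lemma gives a factorization
\[
\pi=\phi_1\circ\phi_2\circ\cdots\circ\phi_r\colon E=Q_r\longrightarrow Q_{r-1}\longrightarrow\cdots\longrightarrow Q_0=X.
\]
Each $Q_i$ carries a $G$-action, and each $\phi_i$ is a $G$-equivariant torsor under the $G$-equivariant vector bundle $V_i\times_X Q_{i-1}\to Q_{i-1}$, with a $G$-equivariant action map.

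First, for each $i$, I apply \cite[Theorem 4.1]{Thomason-equivariantKtheory} to the torsor $\phi_i\colon Q_i\to Q_{i-1}$. This gives that the flat pullback $\phi_i^*\colon\Kgroup^G(Q_{i-1})\to\Kgroup^G(Q_i)$ is an isomorphism. Second, since all $\phi_i$ are flat, flat pullback is functorial: $\pi^*=\phi_r^*\circ\cdots\circ\phi_1^*$. A composition of isomorphisms is an isomorphism, which proves the claim.

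The main obstacle is checking that the hypotheses of Thomason's theorem actually hold for each $Q_i$. The intermediate stages $Q_i=\spec_X R_i$ are affine over $X$, so each is a separated scheme of finite type over $k$ with a $G$-action. I must also confirm that the vector bundle $V_i\times_X Q_{i-1}$ is $G$-equivariant as a bundle on $Q_{i-1}$ and that the torsor structure is compatible with the $G$-action. These compatibilities are exactly what the second bullet of Lemma~\ref{lem: factorization of twisted weighted vector bundles} provides.

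One subtlety is that Thomason's statement is for torsors under vector bundles pulled back from the base of the torsor. Here $V_i\times_X Q_{i-1}$ is indeed a vector bundle over $Q_{i-1}$, so no further argument is needed. Finally, I will note that positivity of the weights is used implicitly in the lemma, to order the filtration by degree; no further input is required.
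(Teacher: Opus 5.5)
Your proposal is correct and is essentially the paper's own proof. The paper likewise factors $\pi$ via Lemma~\ref{lem: factorization of twisted weighted vector bundles} into $G$-equivariant torsors under vector bundles, applies \cite[Theorem 4.1]{Thomason-equivariantKtheory} to each stage, and composes the resulting isomorphisms of flat pullbacks.
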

\begin{proof}
    By Lemma~\ref{lem: factorization of twisted weighted vector bundles}, we can factor $E\rightarrow X$ as the composition of torsors under vector bundles. Then, the statement follows by iteratively applying~\cite[Theorem 4.1]{Thomason-equivariantKtheory}.
\end{proof}

It is important to note that in general the above results do not hold for $\Kring$ if $X$ is not smooth even in the case of vector bundles, see for instance~\cite[Example 2.1]{AndersonPayne-OperationalKtheory}.

\subsubsection{Splitting principle for twisted weighted vector bundles}

As a corollary of Proposition~\ref{prop: homotopy invariance twisted}, we obtain a splitting principle for twisted weighted vector bundles.

Let $\mathbf{w}=(w_1,\ldots,w_r)$ be a sequence of positive integers satisfying
$w_1<\cdots<w_r$, and let $\mathbf{n}=(n_1,\ldots,n_r)$ be a sequence of positive integers. As in the case of vector bundles, a $G$-equivariant twisted weighted vector bundle $E$ on $X$ with weights $\mathbf{w}$ and such that the eigenspace of weight $w_i$ has rank $n_i$ for each $i=1,\ldots,r$ is equivalent to giving a $G$-invariant morphism
$$
X\longrightarrow BG_{\mathbf{w},\mathbf{n}}.
$$
The group $G_{\mathbf{w},\mathbf{n}}$ can be described inductively on $r$ as follows.

\begin{Def}[{\cite[\S2.1.7]{quek-rydh-weighted-blowup}}]
    If $r=1$, we have that $G_{w_1,n_1}:= \GL_{n_1}$. If $r>1$, let us denote by $\mathbf{w}'$ (respectively $\mathbf{n}'$) the vector of integers obtained by forgetting the last entry of $\mathbf{w}$ (respectively $\mathbf{n}$). We have the isomorphism of groups
    \[
        G_{\mathbf{w},\mathbf{n}}:= (\GL_{n_r} \times G_{\mathbf{w'},\mathbf{n}'}) \ltimes \G_a^{n_rN_r}
    \]
    where $N_r$ is the dimension of the $w_r$-th degree piece of a graded polynomial algebra with free variables $\{x_{i,j} : 1 \leq i\leq r -1, 1\leq j\leq n_i\}$, where $x_{i,j}$ is given weight $w_i$.
\end{Def}

We briefly explain why morphisms $X\rightarrow BG_{\mathbf{w},\mathbf{n}}$ parametrize twisted weighted vector bundles $E$ on $X$ with weight vector $\mathbf{w}$, where the eigenspace of each weight $w_i$ has rank $n_i$ for $i=1,\ldots,r$ (see also \cite[\S2.1.7]{quek-rydh-weighted-blowup}). When $r=1$, this follows from the fact that every twisted weighted vector bundle with a single weight has linear transition functions. If $r>1$, it essentially follows from the proof of Lemma~\ref{lem: factorization of twisted weighted vector bundles}. With the notation as in Lemma~\ref{lem: factorization of twisted weighted vector bundles}, for every pair of open subschemes $U_1,U_2\subset X$ that trivialize $E$, let $\underline{x}$ denote the variables with weight less than $w_r$, and $\underline{y}$ the variables with weight $w_r$. Note that $R|_{U_1}\simeq R_{r-1}|_{U_1}[\underline{y}]$, and similarly for $R|_{U_2}$, with the transition map being $\underline{y}\mapsto A\underline{y}+P(\underline{x})$ for a matrix $A$ and polynomial $P$ both with coefficients in $\sO_{U_1\cap U_2}$. The coefficients of the polynomial $P$ can be identified with an element of $ \Ga^{n_rN_r}(U_1\cap U_2)$. 

\begin{Rmk}\label{rmk: natural_maps_G_Gln}
    From the definition of $G_{\mathbf{w},\mathbf{n}}$ we obtain a natural quotient map $G_{\mathbf{w},\mathbf{n}} \to  \GL_{\mathbf{n}}$. The corresponding morphism
    $$
    B G_{\mathbf{w},\mathbf{n}} \longrightarrow B\GL_{\mathbf{n}}
    $$
    is precisely given by the association $E\mapsto \{V_i\}_{i=1,\dots,r}$ as in Lemma~\ref{lem: factorization of twisted weighted vector bundles}. Geometrically, this morphism corresponds to isolating the transition functions of the component vector bundles $V_i$.

    Moreover, by construction there is also a natural inclusion of groups $\GL_{\mathbf{n}}\rightarrow G_{\mathbf{w},\mathbf{n}}$ which gives us a section of the gerbe
    $$
    BG_{\mathbf{w},\mathbf{n}} \rightarrow B \GL_{\mathbf{n}}.
    $$
\end{Rmk}

\begin{Rmk}\label{rem: V-torsor for semidirect product}
    Let $G$ be a semi-direct product $Q\ltimes\Ga^n$ where $Q$ is a linear group scheme.
    Denote by $V_G$ the vector bundle $[\Ga^n/G]$ on $BG$ obtained from the conjugation action of $G$ on $\Ga^n$. Because the conjugation action is linear, this is indeed a vector bundle over $BG$. Define a morphism
    \[
        \rho: V_G \times_{BG}BQ \longrightarrow BQ
    \]
    as follows. Let $P_G\rightarrow S$ be a $G$-torsor over $S$ with a $G$-equivariant map $P_G\rightarrow\Ga^n$, $P_Q\rightarrow S$ a $Q$-torsor, and $\lambda$ an isomorphism over $S$ between $P_G$ and $(P_Q \times G )/Q$. The map $\rho$ associates to this triple the $Q$ torsor $P_G/\Ga^n$ over $S$. Note that this comes with a natural isomorphism $P_G/\Ga^n \xrightarrow{\sim} (P_Q \times (G/\Ga^n))/Q$ and thus with a natural isomorphism with $P_Q$. In other words, there is a natural transformation between $\rho$ and the projection  $ V_G \times_{BG}BQ \longrightarrow BQ$. We will ignore this subtlety and identify $\rho$ with this projection in the sequel. In particular, $\rho$ defines a $V_G$-action on $BQ$ relative to $BG$.
\end{Rmk}
\begin{Lemma}\label{lem: V-torsor for semidirect product}
    In the notation of Remark~\ref{rem: V-torsor for semidirect product}, the map $BQ\rightarrow BG$ is a $V_G$-torsor.
\end{Lemma}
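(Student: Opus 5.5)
To show that $BQ\to BG$ is a $V_G$-torsor, I will use the action $\rho$ from Remark~\ref{rem: V-torsor for semidirect product}. Two things need checking: that the action map
\[
(\rho,\mathrm{pr}_2)\colon V_G\times_{BG}BQ\longrightarrow BQ\times_{BG}BQ
\]
is an isomorphism, and that $BQ\to BG$ is smooth-locally surjective, i.e.\ has sections fppf (here smooth) locally on the base. Both properties are local on $BG$, so the plan is to pull everything back along the atlas $\spec k\to BG$, i.e.\ along the trivial $G$-torsor, and check the statements there.

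\textbf{Step 1: the fibre product.} Pulling back along $\spec k\to BG$ gives $\spec k\times_{BG}BQ\simeq G/Q$. Since $G=Q\ltimes\Ga^n$, the quotient $G/Q$ is identified with $\Ga^n\simeq\bA^n$ as a scheme, via $a q\mapsto a$ for $a\in\Ga^n$ and $q\in Q$. The pullback of $V_G=[\Ga^n/G]$ is the trivial vector bundle $\Ga^n$ over $\spec k$. In particular the pullback of $BQ\to BG$ has a section, namely the identity coset, so $BQ\to BG$ is surjective and smooth-locally admits sections.

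\textbf{Step 2: the action is free and transitive.} I will unravel $\rho$ on $S$-points over the trivial torsor $P_G=G\times S$. Here a point of $BQ$ is a reduction of $P_G$ to $Q$, equivalently a section of $P_G/Q\simeq\Ga^n$. The datum in $V_G$ is a $G$-equivariant map $P_G\to\Ga^n$, equivalently an element $v\in\Ga^n(S)$. Tracing the definition of $\rho$, the $V_G$-action translates the $Q$-reduction by $v$. Concretely, $P_Q\subset P_G$ is replaced by $v\cdot P_Q$, which is again $Q$-stable because $\Ga^n$ is normal in $G$. On $G/Q\simeq\Ga^n$ this becomes the translation action $a\mapsto v+a$. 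Translation of $\Ga^n$ on itself is simply transitive, so $(\rho,\mathrm{pr}_2)$ is an isomorphism after base change to the atlas, and hence an isomorphism by descent.

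\textbf{Main obstacle.} The delicate point is Step 2. One has to verify that the abstract description of $\rho$, including the natural isomorphism $P_G/\Ga^n\simeq P_Q$ that the Remark says we will suppress, really corresponds to translation on $G/Q$. One also has to check that it is compatible with the conjugation action defining $V_G$, so that the local identifications glue equivariantly. I would handle this by computing with explicit cocycles: write elements of $G$ as pairs $(q,a)$ with multiplication $(q,a)(q',a')=(qq',\,a+q\cdot a')$, and check that the $G$-action on $G/Q\simeq\Ga^n$ is the affine action $(q,a)\cdot x=a+q\cdot x$. This affine action has linear part exactly the conjugation representation defining $V_G$. That identifies $[\Ga^n/G]$, with its affine $G$-action, as a torsor under $[\Ga^n/G]$ with the linear action, which is precisely the claim.
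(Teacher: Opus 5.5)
Your proof is correct, but it takes a different route from the paper's.

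\textbf{The paper's route.} The paper argues globally. It identifies $BQ\times_{BG}BQ$ with the quotient stack $[G/(Q\times Q)]$, using the description of the diagonal of $BG$ as $[G/(G\times G)]\to BG\times BG$. It identifies $V_G\times_{BG}BQ$ with $[\Ga^n/Q]$, with $Q$ acting by conjugation. The action map then becomes the morphism $[\Ga^n/Q]\to[G/(Q\times Q)]$ induced by $\Ga^n\hookrightarrow G$, which is equivariant for the diagonal $Q\to Q\times Q$. That this is an isomorphism follows from the decomposition $G=\Ga^n\cdot Q$. The paper never checks local sections separately and treats surjectivity of $BQ\to BG$ as evident.

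\textbf{Your route.} You base change along the smooth atlas $\spec k\to BG$. There everything reduces to the simply transitive translation action of $\Ga^n$ on $G/Q\simeq\Ga^n$. You also verify the local-section condition explicitly, so you cover the full definition of a torsor.

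\textbf{Your last paragraph.} It actually gives a cleaner global argument than the paper's. You have $BQ\simeq[(G/Q)/G]$, where $G/Q\simeq\Ga^n$ carries the affine action $(q,a)\cdot x=a+q\cdot x$. Its linear part is exactly the conjugation representation defining $V_G$. So $BQ\to BG$ is visibly an affine bundle modeled on $V_G$, which is all that is used later through Thomason's homotopy invariance.

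This also avoids an awkward point in the Remark. There $\rho$ is declared isomorphic to the projection on underlying $Q$-torsors, so the action lives entirely in the identification $\lambda$ with $P_G$. You capture this correctly by speaking of reductions of structure group of $P_G$.

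\textbf{One small imprecision.} In the trivial-torsor picture, $v\cdot P_Q$ is $Q$-stable simply because left and right multiplication commute. Normality of $\Ga^n$ plays a different role: it makes the gauge transformation $p\mapsto p\cdot\phi(p)$ of a general $P_G$ well defined and $G$-equivariant. In other words, normality is what makes $V_G$ act at all.
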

\begin{proof}
     Since $\rho$ is a $V_G$-action, it is enough to prove that the induced morphism 
    $$ (\rho,\rho):V_G \times_{BG} BQ \longrightarrow BQ\times_{BG}BQ$$
    is an isomorphism. Identifying  
    $$
    BQ \times_{BG} BQ \simeq (BQ \times BQ) \times_{BG \times BG} BG
    $$
    and then the diagonal $BG \to BG \times BG$ with the morphism $[G/G \times G] \to BG \times BG$ given by the two projections, one sees that 
    $$
    BQ \times_{BG} BQ \simeq  [G/Q \times Q].
    $$
    Here $G\times G$ acts on $G$ by
    \[
        (g_1,g_2)\cdot x = g_1xg_2^{-1},
    \]
    and this action restricts to an action of $Q\times Q$ on $G$.

    Furthermore, one can identify the map $  V_G \times_{BG}BQ \simeq [\Ga^n/Q]$ and the map $(\rho,\rho)$ with the morphism
    $$
    [\Ga^n/Q] \longrightarrow [G/ Q\times Q]
    $$
    induced by the natural inclusion $\Ga^n \hookrightarrow G$. Note that the inclusion is equivariant with respect to the morphism of groups $Q\rightarrow Q\times Q$ given by the diagonal embedding. Checking that $(\rho,\rho)$ is an isomorphism is now straightforward.
\end{proof}

\begin{Prop}\label{lemma: BT-toBGwn}
Let $X \to BG_{\mathbf{w},\mathbf{n}}$ be a $G$-invariant morphism corresponding to a $G$-equivariant twisted bundle $E$ on $X$. Consider the Cartesian diagram
\[
\begin{tikzcd}
X' \arrow[r] \arrow[d] & B \GL_{\mathbf{n}} \arrow[d] \\
X \arrow[r] & B G_{\mathbf{w},\mathbf{n}}.
\end{tikzcd}
\]
induced by the map $ B \GL_{\mathbf{n}} \longrightarrow BG_{\mathbf{w},\mathbf{n}}$ in \ref{rmk: natural_maps_G_Gln}. Then the $G$-equivariant morphism $X' = X \times_{BG_{\mathbf{w},\mathbf{n}}} B\GL_{\mathbf{n}} \longrightarrow X$ induces an isomorphism through flat pullback $\Kgroup^G(X) \rightarrow \Kgroup^G(X')$.
Moreover, the pullback of $E$ to $X'$ is isomorphic to the direct sum of the pullback of the vector bundles $E_i$ introduced in in Lemma~\ref{lem: factorization of twisted weighted vector bundles}.
\end{Prop}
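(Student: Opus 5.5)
The plan is to show that $B\GL_{\mathbf n}\to BG_{\mathbf w,\mathbf n}$ is a composition of torsors under vector bundles. The isomorphism on $\Kgroup^G$ then follows from homotopy invariance for torsors \cite[Theorem 4.1]{Thomason-equivariantKtheory}, applied after base change to $X$. I will argue by induction on $r$, using the inductive description $G_{\mathbf w,\mathbf n}=(\GL_{n_r}\times G_{\mathbf w',\mathbf n'})\ltimes\Ga^{n_rN_r}$.

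Set $Q=\GL_{n_r}\times G_{\mathbf w',\mathbf n'}$. By Lemma~\ref{lem: V-torsor for semidirect product}, the map $BQ\to BG_{\mathbf w,\mathbf n}$ induced by the inclusion $Q\subset G_{\mathbf w,\mathbf n}$ is a $V_G$-torsor. Its base change $X\times_{BG_{\mathbf w,\mathbf n}}BQ\to X$ is then a torsor under the pullback of $V_G$, which is a vector bundle on $X$. This torsor carries a $G$-action compatible with the action of the bundle, because the map $X\to BG_{\mathbf w,\mathbf n}$ is $G$-invariant, so everything is pulled back from $BG_{\mathbf w,\mathbf n}$ and is naturally $G$-equivariant. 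Hence \cite[Theorem 4.1]{Thomason-equivariantKtheory} shows that flat pullback $\Kgroup^G(X)\to\Kgroup^G(X\times_{BG_{\mathbf w,\mathbf n}}BQ)$ is an isomorphism.

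Next observe $BQ=B\GL_{n_r}\times BG_{\mathbf w',\mathbf n'}$, and the map $B\GL_{\mathbf n}\to BQ$ is the product of the identity with $B\GL_{\mathbf n'}\to BG_{\mathbf w',\mathbf n'}$. By induction on $r$ (the case $r=1$ being trivial since $G_{w_1,n_1}=\GL_{n_1}$), pulling back along this map is again an iterated torsor under vector bundles, hence an isomorphism on $\Kgroup^G$. Composing the two steps gives the isomorphism for $X'\to X$. I must check that the composite of the inclusions $\GL_{\mathbf n}\subset Q\subset G_{\mathbf w,\mathbf n}$ is the section from Remark~\ref{rmk: natural_maps_G_Gln}, which is true by construction. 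This compatibility, together with the $G$-equivariance of the torsor structures after base change, is the main point requiring care. In particular, the identification of $\rho$ with the projection in Remark~\ref{rem: V-torsor for semidirect product} must be compatible with the $G$-action on $X$.

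For the last assertion, the pullback of $E$ to $X'$ corresponds to the composite $X'\to B\GL_{\mathbf n}\to BG_{\mathbf w,\mathbf n}$. The inclusion $\GL_{\mathbf n}\to G_{\mathbf w,\mathbf n}$ has trivial translation part: its transition functions are $\underline y\mapsto A\underline y$ with $P=0$. So the resulting twisted weighted bundle is $\spec$ of a polynomial algebra whose generators transform linearly in each weight. This is the direct sum of the bundles associated to the $\GL_{n_i}$-factors, with weight $w_i$. Composing further with the quotient $G_{\mathbf w,\mathbf n}\to\GL_{\mathbf n}$ recovers the bundles $V_i$ of Lemma~\ref{lem: factorization of twisted weighted vector bundles}, by Remark~\ref{rmk: natural_maps_G_Gln}. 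By Remark~\ref{rmk: mistake in Arena-Obinna} these agree with the $E_i$, which gives the claimed splitting.
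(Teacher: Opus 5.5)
Your proposal is correct and follows the paper's proof. You realize $B\GL_{\mathbf n}\to BG_{\mathbf w,\mathbf n}$ as an iterated torsor under vector bundles via Lemma~\ref{lem: V-torsor for semidirect product} and apply Thomason's homotopy invariance; for the splitting you use that the composite $B\GL_{\mathbf n}\to BG_{\mathbf w,\mathbf n}\to B\GL_{\mathbf n}$ is the identity. Your explicit induction on $r$ and your check that the base-changed torsors are $G$-equivariant simply spell out details the paper leaves implicit.
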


\begin{proof}
    The morphism $ B \GL_{\mathbf{n}} \longrightarrow BG_{\mathbf{w},\mathbf{n}}$ in Remark~\ref{rmk: natural_maps_G_Gln} is a composition of torsors under vector bundles by Lemma~\ref{lem: V-torsor for semidirect product}. Applying \cite[Theorem 4.1]{Thomason-equivariantKtheory} (see also \S\ref{sec: homotopy invariance} above), we obtain the isomorphism $\Kgroup^G(X) \rightarrow \Kgroup^G(X')$ in the statement.

     For the moreover part, it is enough to observe that the composition 
     $$  B \GL_{\mathbf{n}} \longrightarrow B G_{\mathbf{w},\mathbf{n}}\longrightarrow B \GL_{\mathbf{n}} $$ 
     is the identity. The two maps here are those in Remark~\ref{rmk: natural_maps_G_Gln}.
\end{proof}

\subsubsection{Lambda Structure and Self-intersection Formula}

There exists a natural $\lambda$-structure on $\Kring_G(X)$, that is, a collection of maps $\lambda^i:\Kring_G(X)\rightarrow\Kring_G(X)$ with the property that $\lambda^i([E])=[\Lambda^iE]$ for every $G$-vector bundle $E$ over $X$. Here, $\Lambda^iE$ denotes the $i$-th exterior power. As in the whole paper, we are identifying vector bundles with their sheaves of sections, see Remark~\ref{rmk: vector bundles vs locally-free sheaves}. These maps satisfy the usual compatibility properties mimicking those of exterior powers.

\begin{Def}\label{def:euler_class}
    The $K$-theoretic Euler class of an element $\alpha\in\Kring_G(X)$ is defined by
\begin{equation*}
    \lambda_{-1}(\alpha):=\sum_{i=0}^{\infty}(-1)^i\lambda^i(\alpha).
\end{equation*}
\end{Def}
This is well-defined since $\lambda^i(\alpha)=0$ for $i$ large enough (depending on $\alpha$). The operator $\lambda_{-1}$ is multiplicative on exact sequences.

As the name suggests, the $K$-theoretic Euler class plays a role analogous to that of the top Chern class (also called the Euler class) in intersection theory. This is for instance illustrated by the self-intersection formula below.

\begin{Prop}[Self-intersection Formula, {\cite[Theorem 3.1]{Thomason-blowup}}]\label{prop: self intersection formula}
    Let $i:Z\hookrightarrow X$ be a regular embedding of smooth $G$-varieties, and let $N_{Z/X}$ be the normal bundle. Then, for every class $\alpha\in\Kring_G(X)$ we have
    \[
        i^*i_*(\alpha)=\lambda_{-1}(N_{Z/X}^{\vee})\cdot\alpha.
    \]
\end{Prop}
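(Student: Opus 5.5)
The plan is to reduce to the case of the zero section of a vector bundle by deformation to the normal cone, and to check that case by the Koszul resolution.

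Since $X$ is smooth and $i$ is a regular embedding, $\Kring_G$ agrees with $\Kgroup^G$ for both $X$ and $Z$ (by \S\ref{sec: comparison isomorphisms}). Hence $i_*$ and $i^*$ make sense, and $i^*i_*$ is a $\Kring_G(Z)$-linear endomorphism of $\Kring_G(Z)$. Linearity follows from the projection formula~\eqref{eqn: Projection Formula}: for $\beta\in\Kring_G(X)$,
\[
i^*i_*(i^*\beta\cdot\alpha)=i^*(\beta\cdot i_*\alpha)=i^*\beta\cdot i^*i_*\alpha .
\]
This alone does not give the formula, because $i^*$ need not be surjective. I will instead argue directly at the level of sheaves.

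Let $\alpha=[F]$ with $F$ a $G$-equivariant locally free sheaf on $Z$. By definition, $i^*i_*F$ is the derived pullback $Li^*(i_*F)$. Its class equals
\[
\sum_q (-1)^q\,[\mathcal{T}or_q^{\sO_X}(\sO_Z,i_*F)] .
\]
For a regular embedding there is a canonical $G$-equivariant isomorphism
\[
\mathcal{T}or_q^{\sO_X}(\sO_Z,i_*F)\simeq \Lambda^q(N_{Z/X}^\vee)\otimes F .
\]
Locally it is computed by the Koszul complex of the regular sequence cutting out $Z$, tensored with $F$ and restricted to $Z$, where all differentials vanish. Globally, the identification $\mathcal{T}or_1(\sO_Z,\sO_Z)\simeq I/I^2$ is intrinsic. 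The graded-commutative algebra structure on $\mathcal{T}or_\bullet(\sO_Z,\sO_Z)$ then gives a canonical map
\[
\Lambda^\bullet(I/I^2)\to\mathcal{T}or_\bullet(\sO_Z,\sO_Z),
\]
which is an isomorphism because it is so locally. Being canonical, it is $G$-equivariant. Summing with signs gives
\[
i^*i_*\alpha=\sum_q(-1)^q[\Lambda^qN^\vee_{Z/X}]\cdot\alpha=\lambda_{-1}(N_{Z/X}^\vee)\cdot\alpha .
\]
Since $Z$ is smooth and $G$-quasiprojective in our setting, classes of equivariant vector bundles generate $\Kring_G(Z)$ (\S\ref{sec: comparison isomorphisms}). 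By additivity of both sides the formula therefore holds for all $\alpha$.

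The main obstacle is making the global identification of the Tor sheaves, with its equivariance, rigorous. The canonical multiplicative description of $\mathcal{T}or_\bullet(\sO_Z,\sO_Z)$ as an exterior algebra handles this, and the identification with $\mathcal{T}or_\bullet(\sO_Z,i_*F)$ follows because $F$ is flat over $\sO_Z$. An alternative route is deformation to the normal cone. One reduces to the zero section $s$ of $N=N_{Z/X}$, using that $s^*$ is an isomorphism by homotopy invariance (Proposition~\ref{prop: homotopy invariance twisted}), and then applies the equivariant Koszul resolution of $s_*\sO_Z$ by $\Lambda^\bullet\pi^*N^\vee$. Either way this is the cited result \cite[Theorem 3.1]{Thomason-blowup}.
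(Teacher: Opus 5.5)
Your argument is correct and is essentially the paper's. The paper obtains the formula as the special case of its excess intersection formula (Theorem~\ref{prop: excess intersection formula}) in which $Y'$ is the embedded subvariety itself, so that the excess bundle is $N_{Z/X}$. That proof rests on the same identification $\mathcal{T}or_k\simeq\Lambda^k$ of the conormal bundle, with equivariance by naturality, and you rightly read $\alpha$ as a class in $\Kring_G(Z)$.

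The only difference is that the paper treats an arbitrary coherent sheaf via a Grothendieck spectral sequence. You instead take $F$ locally free, so flatness gives $\mathcal{T}or_q(\sO_Z,i_*F)\simeq\mathcal{T}or_q(\sO_Z,\sO_Z)\otimes F$ directly, and then extend by additivity because vector bundles generate $\Kring_G(Z)$ for smooth $Z$.
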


This is well-known \cite[Theorem 3.1]{Thomason-blowup} and a special case of the more general excess intersection formula in Theorem~\ref{prop: excess intersection formula}.

\subsubsection{The K\"unneth Property for $B\Gm$}

We show that $B\Gm$ satisfies the K\"unneth formula in K-theory; for Chow rings, this is already known~\cite[Lemma 10]{Oesinghaus18}.

\begin{Lemma}\label{lem: kunneth formula BGm}
    Consider the trivial $\Gm$-action on $X$, and let $t$ be the class of the geometric standard $\Gm$-representation. Then,
    \[
        \Kgroup([X/G]\times B\Gm)\simeq\Kgroup^{G\times\Gm}(X)\simeq\Kgroup^G(X)\otimes R(\Gm)
    \]
    as $\Kring_G(X)\otimes R(\Gm)$-modules. If $X$ is smooth, this induces an isomorphism of rings
    \[
        \Kring([X/G]\times B\Gm)\simeq\Kring_{G\times\Gm}(X)\simeq\Kring_G(X)[t,t^{-1}].
    \]
\end{Lemma}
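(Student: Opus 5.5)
The plan is to build the map explicitly and prove it is an isomorphism by decomposing coherent sheaves according to characters of $\Gm$. Since $\Gm$ acts trivially on $X$ and the actions of $G$ and $\Gm$ commute, a $G\times\Gm$-equivariant coherent sheaf $F$ on $X$ is the same thing as a $G$-equivariant coherent sheaf together with a $\Z$-grading $F=\bigoplus_{n\in\Z}F_n$. Here each $F_n$ is a $G$-equivariant coherent subsheaf on which $\Gm$ acts through the character $t^n$. This is the standard equivalence between comodules over $k[t^{\pm1}]$ and graded modules. Because $X$ is of finite type and $F$ is coherent, only finitely many $F_n$ are nonzero: the weight decomposition is compatible with restriction to an affine cover, and a coherent module on a Noetherian affine scheme is finitely generated, so it is supported in finitely many degrees.

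Define
\[
\Phi\colon \Kgroup^G(X)\otimes R(\Gm)\longrightarrow \Kgroup^{G\times\Gm}(X), \qquad [F]\otimes t^n\mapsto [F\otimes t^n],
\]
where $F\otimes t^n$ is $F$ with $\Gm$ acting through the character $t^n$. In the other direction define
\[
\Psi\colon [F]\mapsto \sum_n [F_n]\otimes t^n .
\]
To see that $\Psi$ is well defined, note that the decomposition into weight spaces is an exact functor: a short exact sequence of equivariant sheaves is $\Gm$-equivariant, so it splits into short exact sequences of weight pieces. Hence $\Psi$ respects the relations in $\Kgroup^{G\times\Gm}(X)$. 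The compositions $\Phi\circ\Psi$ and $\Psi\circ\Phi$ are the identity, because $F\simeq\bigoplus_n F_n\otimes t^n$ equivariantly and $(F\otimes t^n)_m$ equals $F$ when $m=n$ and $0$ otherwise.

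Module compatibility comes from the same description. For a $G\times\Gm$-equivariant perfect complex $P$ and a sheaf $F$, we have $(P\otimes^{\mathrm L} F)_n=\bigoplus_{a+b=n}P_a\otimes^{\mathrm L} F_b$. This holds because the weight decomposition of complexes is exact and commutes with tensor products; the derived tensor product can be computed with locally free resolutions, and those can be taken graded. The first isomorphism $\Kgroup([X/G]\times B\Gm)\simeq\Kgroup^{G\times\Gm}(X)$ is the standard identification $[X/G]\times B\Gm\simeq[X/(G\times\Gm)]$ from \S\ref{sec: comparison isomorphisms}.

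When $X$ is smooth, \S\ref{sec: comparison isomorphisms} gives the identifications $\Kring_{G\times\Gm}(X)\simeq\Kgroup^{G\times\Gm}(X)$ and $\Kring_G(X)\simeq\Kgroup^G(X)$. Applying the same argument directly to perfect complexes, $\Phi$ becomes a ring map, since $t^a\cdot t^b=t^{a+b}$ under tensor product of characters. Also $R(\Gm)=\Z[t^{\pm1}]$ by \eqref{eq: representation ring tori}, which gives $\Kring_G(X)[t,t^{-1}]$.

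The only step needing real care is that the weight decomposition exists and is exact for equivariant sheaves, and that it is compatible with the $G$-action, which relies on the actions commuting. I would check this locally on an affine cover by comodule theory and glue, since the decomposition is canonical.
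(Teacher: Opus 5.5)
Your argument is correct, but it proves injectivity by a different route from the paper.

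\textbf{The paper's route.} The weight decomposition is used only for surjectivity of $\phi$. For injectivity, the paper composes $\phi$ with
$\Kgroup^{G\times\Gm}(X)\simeq\Kgroup^{G\times\Gm}(X\times\bA^n)\to\Kgroup^{G\times\Gm}(X\times(\bA^n\smallsetminus 0))\simeq\Kgroup^G(X\times\bP^{n-1})$
for every $n$, and then invokes Thomason's projective bundle formula. This mimics the approximation argument used for Chow groups, as in Oesinghaus.

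\textbf{Your route.} You observe that the weight-space functors $F\mapsto F_n$ are exact, so they descend to an explicit inverse $\Psi$. Injectivity then follows formally from $\Psi\circ\Phi=\Id$.

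\textbf{Comparison.} Your approach is shorter and self-contained. It needs no homotopy invariance, localization sequence or projective bundle formula. You also avoid the bookkeeping of shifting by powers of $t$ and letting $n$ grow so that a given Laurent polynomial is detected on some $X\times\bP^{n-1}$. It also makes the canonicity of the isomorphism and its module and ring compatibilities transparent. The paper's argument is the one that transfers to theories such as Chow groups, where no exact weight decomposition of cycles is available. In $K$-theory it is more than is needed, since the paper already uses the decomposition for surjectivity.

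\textbf{A small simplification.} For $\Kring_G(X)\otimes R(\Gm)$-linearity you only need the equivariant identification $\mathcal{T}or_i(P\otimes t^a,F\otimes t^b)\simeq\mathcal{T}or_i(P,F)\otimes t^{a+b}$ for $P$ with trivial $\Gm$-weight. This lets you drop the remark about choosing graded locally free resolutions.
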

\begin{proof}
    Let $\phi:\Kgroup^G(X)\otimes R(\Gm)\simeq\Kgroup^G(X)\otimes\Z[t,t^{-1}]\rightarrow\Kgroup^{G\times\Gm}(X)$ be the natural homomorphism of $\Kring_{G}(X)\otimes R(\Gm)$-modules. To show that $\phi$ is injective, it is enough to consider the composition of it with
    \[
        \Kgroup^{G\times\Gm}(X)\simeq\Kgroup^{G\times\Gm}(X\times\bA^n)\rightarrow\Kgroup^{G\times\Gm}(X\times(\bA^n\setminus0))\simeq\Kgroup^{G}(X\times\mathbb{P}^{n-1}),
    \]
    for every $n\geq0$, and apply the projective bundle formula of~\cite[Theorem 3.1]{Thomason-equivariantKtheory} (see also Theorem~\ref{prop: twisted weighted projection formula} below). Here, $G$ acts trivially on $\bA^n$, while $\Gm$ acts via the usual scaling action.
    To show that $\phi$ is surjective, note that every $(G\times\Gm)$-coherent sheaf over $X$ is simply a $G$-equivariant coherent sheaf $F$ with a $\Gm$-action commuting with that of $G$. Therefore, $F$ splits as a direct sum of $G$-equivariant coherent sheaves according to the $\Gm$-weights. Each summand can be written as the tensor product of the underlying $G$-equivariant coherent sheaf and a power of the standard presentation of $\Gm$. Thus, its class is in the image of $\phi$, hence the surjectivity.

    When $X$ is smooth, $\Kgroup([X/G]\times B\Gm)\simeq\Kring([X/G]\times B\Gm)$ is a ring and $\phi$ is a ring isomorphism.
\end{proof}

\subsubsection{Euler Class of Twisted Weighted Vector Bundles}\label{subsec: euler class}

In the same fashion as in~\cite{Arena-Obinna}, we now define and study the $\Gm$-equivariant Euler class of twisted weighted vector bundles $E$.

We start by considering the case of (untwisted) weighted vector bundles.
Given a $G$-equivariant vector bundle $E$ over $X$ and $a\in\Z$, we denote by $E^a$ the corresponding weighted vector bundle with weight $a$, and vice versa.

Let $E$ be a $G$-equivariant weighted vector bundle on $X$ with weights $0<w_1<\ldots<w_r$. Viewing $E$ as a $G$-equivariant vector bundle on $X\times B\Gm$ yields a $\Gm$-equivariant class $\lambda_{-1}^{\Gm}(E) \in \Kring_{\Gm \times G}(X)$, which is computed as follows. Writing $E=\oplus E_i^{a_i}$, where $E_i$ has rank $n_i$, then
\[
    \lambda_{-1}^{\Gm}(E)=\prod_i\lambda_{-1}^{\Gm}(E_i^{w_i})\in\Kring_{G\times\Gm}(X) \simeq \Kring_{G}(X \times B\Gm) \simeq \Kring_G(X)[t,t^{-1}].
\]
Note that we are writing $\lambda_{-1}^{\Gm}$ instead of $\lambda_{-1}$ to stress that we are keeping track of the $\Gm$-action on $E$, and the class lives in $\Kring_{G\times\Gm}(X)$ rather than $\Kring_{G}(X)$.

If $E_i$ is a line bundle, then $\lambda_{-1}^{\Gm}(E_i^{w_i})=1-[E_i^{w_i}]=1-[E_i]t^{w_i}$.
In general, the K-theoretic versions of the splitting principle~\cite[page 51]{fulton} and of~\cite[Proposition 3.6, Corollary 3.7]{Arena-Obinna} work with the same proofs, and yield the equality
\[
    \lambda_{-1}^{\Gm}(E_i^{w_i})=1-\lambda^1(E_i)t^{w_i}+\ldots+(-1)^{n_i}\lambda^{n_i}(E_i)t^{w_i\cdot n_i}.
\]
Finally, let $E \to X$ be a $G$-equivariant twisted weighted vector bundle on $X$ with weights $0<w_1<\cdots<w_r$. Recall from Remark~\ref{rmk: mistake in Arena-Obinna} that we canonically associate to $E$ a sequence of weighted vector bundles $E_i^{w_i}$ on $X$ (see also~\cite[\S2.1.9]{quek-rydh-weighted-blowup} and~\cite[Proposition~3.8]{Arena-Obinna}).

\begin{Def}\label{def: euler class twisted weighted vector bundle}
    We define
    \[
        \lambda_{-1}^{\Gm}(E):=\prod_i\lambda_{-1}^{\Gm}(E_i^{w_i})\in\Kring_G(X)[t,t^{-1}].
    \]
\end{Def}

\subsubsection{Twisted Weighted Projective Bundle Formula}\label{subsec: projective bundle formula}

Given a $G$-equivariant twisted weighted vector bundle $\pi:E\rightarrow X$ with 0-section $E_0\subset E$ and weights $0<w_1<\ldots<w_r$, let $\cP(E)=[(E\setminus E_0)/\Gm]$ be the associated projective bundle. Using the homotopy invariance for twisted weighted vector bundles, the projection and self-intersection formulas in the same way as~\cite[Lemma 2.5, Corollary 3.11]{Arena-Obinna}, we obtain:

\begin{Thm}\label{prop: twisted weighted projection formula}
    Let $E$ be a $G$-equivariant twisted weighted vector bundle over $X$, and let $E_i$ be the associated (untwisted) weighted vector bundles as in Subsection~\ref{subsec: euler class}. Set 
    $$
    \lambda_{-1}^{\Gm}(E^{\vee}):=\prod_i\lambda_{-1}^{\Gm}(E_i^{\vee}) \in \Kring_{G \times \Gm}(X).
    $$
    Then,
    \[
    \Kring_G(\cP(E))=\Kring_{G\times\Gm}(E\setminus E_0)\simeq\frac{\Kring_G(X)[t,t^{-1}]}{(\lambda_{-1}^{\Gm}(E^{\vee}))},
    \]
    where $t=[\sO_{\cP(E)}(1)]$ is the pullback of the geometric standard representation on $B\G_m$ along the natural morphism
    $
    \cP(E)\simeq[(E\setminus E_0)/\Gm]\longrightarrow B\Gm.
    $
\end{Thm}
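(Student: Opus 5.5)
The plan is to follow the classical proof of the projective bundle formula, replacing vector-bundle homotopy invariance by Proposition~\ref{prop: homotopy invariance twisted}.

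\textbf{Step 1: localization.} The first identification $\Kring_G(\cP(E))=\Kring_{G\times\Gm}(E\setminus E_0)$ is just the comparison of \S\ref{sec: comparison isomorphisms}, since $\cP(E)=[(E\setminus E_0)/\Gm]$. Assume $X$ smooth, so that $\Kring=\Kgroup$ everywhere. Then consider the $(G\times\Gm)$-equivariant localization sequence for the closed embedding $\sigma:E_0\simeq X\hookrightarrow E$:
\[
\Kgroup^{G\times\Gm}(X)\xrightarrow{\ \sigma_*\ }\Kgroup^{G\times\Gm}(E)\longrightarrow\Kgroup^{G\times\Gm}(E\setminus E_0)\longrightarrow 0 .
\]
The $\Gm$-action on $E$ commutes with $G$, so Proposition~\ref{prop: homotopy invariance twisted}, applied with the group $G\times\Gm$, shows that $\pi^*$ identifies $\Kgroup^{G\times\Gm}(E)$ with $\Kgroup^{G\times\Gm}(X)$. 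Here $\Gm$ acts trivially on $X$, and Lemma~\ref{lem: kunneth formula BGm} gives $\Kgroup^{G\times\Gm}(X)\simeq\Kring_G(X)[t^{\pm1}]$. The quotient is therefore $\Kring_G(X)[t^{\pm1}]$ modulo the image of $(\pi^*)^{-1}\sigma_*$.

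\textbf{Step 2: the image of $\sigma_*$.} Because $\pi\sigma=\mathrm{id}$, we have $\sigma^*=(\pi^*)^{-1}$, so $(\pi^*)^{-1}\sigma_*=\sigma^*\sigma_*$. By the projection formula, $\sigma_*(\alpha)=\sigma_*(\sigma^*\pi^*\alpha)=\pi^*\alpha\cdot\sigma_*1$. Hence the image is the ideal generated by $\sigma^*\sigma_*(1)$, and it remains to show
\[
\sigma^*\sigma_*(1)=\lambda_{-1}^{\Gm}(E^\vee).
\]
For an honest $\Gm$-equivariant vector bundle this is the self-intersection formula, Proposition~\ref{prop: self intersection formula}, with $N=E$ carrying its $\Gm$-weights.

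\textbf{Step 3: the twisted case (main obstacle).} The difficulty is that $E$ is only a twisted weighted bundle, so it is not literally a vector bundle with a linear $\Gm$-action. I will reduce to the split case using Proposition~\ref{lemma: BT-toBGwn}. Pull everything back along the $G\times\Gm$-equivariant map $X'=X\times_{BG_{\mathbf w,\mathbf n}}B\GL_{\mathbf n}\to X$. This pullback is an isomorphism on $\Kgroup$, it is compatible with $\sigma_*$ and $\sigma^*$ by flat base change, and it turns $E$ into $\bigoplus E_i^{w_i}$, a genuine $G\times\Gm$-equivariant vector bundle. Step 2 on $X'$ then gives $\lambda_{-1}^{\Gm}\big(\bigoplus (E_i^{w_i})^\vee\big)=\prod_i\lambda_{-1}^{\Gm}(E_i^\vee)$, which is the product in the statement. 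Since the pullback is injective and base change holds, the identity also holds on $X$.

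The point to check carefully is that the Cartesian square $E'\to E$ over $X'\to X$ is flat and Tor-independent with the zero section, so that $f^*\sigma_*=\sigma'_*f'^*$. I also need to keep sign and dual conventions straight, so that the weights $w_i$ appear as $t^{-w_i}$ or $t^{w_i}$ consistently with $t=[\sO_{\cP(E)}(1)]$.

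Finally, compatibility of $t$ with $\sO(1)$ follows from the definition of $t$ as the pullback of the geometric standard representation along $\cP(E)\to B\Gm$.
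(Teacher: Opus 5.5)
Steps 1--3 are sound and follow the route the paper takes when it defers to \cite{Arena-Obinna}:
\begin{itemize}
\item the localization sequence for $E_0\subset E$;
\item homotopy invariance (Proposition~\ref{prop: homotopy invariance twisted}) for the group $G\times\Gm$;
\item the projection formula, showing that the image of $\sigma_*$ is the ideal generated by $\sigma^*\sigma_*(1)$;
\item the self-intersection formula, identifying that generator.
\end{itemize}
Reducing the twisted case to the split one via Proposition~\ref{lemma: BT-toBGwn} is legitimate. The square with $E'=E\times_X X'$ is Cartesian and $E'\to E$ is flat, so $\sigma^*\sigma_*(1)$ pulls back to $\sigma'^*\sigma'_*(1)$. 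The reduction is not strictly needed, though. The affine parts $P(\underline{x})$ of the transition maps have weighted degree $w_r$ in variables of smaller positive weight, so they have no linear terms. Hence the normal bundle of the zero section already splits by weights. Your added smoothness assumption on $X$ is also tacitly used by the paper.

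The gap is your last sentence. The statement asserts that the pullback of the geometric standard representation along $\cP(E)\to B\Gm$ \emph{equals} $[\sO_{\cP(E)}(1)]$. But $\sO_{\cP(E)}(1)$ has its own definition: the degree shift in the relative $\Proj$, or equivalently the dual of the tautological $\sO_{\cP(E)}(-1)$. So the identification does not hold by definition. It is exactly the sign question you raised in Step 3 and left open, since a priori the weight-one character could just as well give $\sO_{\cP(E)}(-1)$. This identification is in fact the only part of the proof the paper writes out.

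The missing argument goes as follows. The map $(E\setminus E_0)\times\A^1\to(E\setminus E_0)\times E$, $(v,\lambda)\mapsto(v,\lambda v)$, is $\Gm$-equivariant when $\Gm$ acts anti-diagonally on the source and only on the first factor of the target. Passing to $\Gm$-quotients, it becomes the tautological map $\sO_{\cP(E)}(-1)\to E$ over $\cP(E)$. Hence the line bundle attached to the weight $-1$ character is $\sO_{\cP(E)}(-1)$. The pullback of the weight-one geometric standard representation is therefore its dual, $\sO_{\cP(E)}(1)$.
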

\begin{proof}
    It remains to justify that $t$ corresponds to the class of $\sO_{\cP(E)}(1)$. Consider the map
$$
(E \smallsetminus E_0) \times \A^1 \longrightarrow (E \smallsetminus E_0) \times E,
\qquad
(v,\lambda) \longmapsto (v,\lambda v).
$$
This map is $\G_m$-equivariant if $\G_m$ acts anti-diagonally on $(E \smallsetminus E_0) \times \A^1$, while it acts on $(E \smallsetminus E_0) \times E$ by multiplication on the first factor only. Passing to the quotient by $\G_m$, the induced map identifies with the tautological inclusion
$$
\sO_{\cP(E)}(-1) \hookrightarrow E
$$
over $\cP(E)$. It follows that the pullback of the geometric standard representation from $B\G_m$ is identified with the dual of $\sO_{\cP(E)}(-1)$, namely $\sO_{\cP(E)}(1)$.
\end{proof}

This recovers~\cite[Proposition 2.5]{Edidin-Hu} in the simpler case of (untwisted unweighted) vector bundles.

\begin{Rmk}
    When $E=C_{I_\bullet}$, one simply has $\lambda_{-1}^{\Gm}(E^\vee)=\lambda_{-1}^{\Gm}(\sN_{\cX/\cY}^\vee)$.
\end{Rmk}

\begin{Example}\label{example M11}
    A simple but interesting example is that of the Deligne-Mumford stack of stable 1-pointed genus 1 curves $\overline{\cM}_{1,1}$. Indeed, in characteristic different from 2 and 3, $\overline{\cM}_{1,1}\simeq\cP(4,6)$, thus
    \[
        \Kring(\overline{\cM}_{1,1})\simeq\frac{\Z[t^{\pm1}]}{((1-t^{-4})(1-t^{-6}))},\qquad t=[\cO_{\cP(4,6)}(1)]
    \]
    by Theorem~\ref{prop: twisted weighted projection formula}. Explicitly, let $\pi:\cE\rightarrow\overline{\cM}_{1,1}$ be the universal curve, and let $\lambda\in\Kring(\overline{\cM}_{1,1})$ be the class of the Hodge line bundle $\pi_*\omega_{\cE/\overline{\cM}_{1,1}}$. Then, $t=\lambda$. To show this, recall the pullback of $\cE$ along the quotient map $\bA^2\setminus0\rightarrow\overline{\cM}_{1,1}$ coincides with the closed subscheme $Z\subset(\bA^2\setminus0)\times\bP^2$ defined as $Y^2Z=X^3+aXZ^2+bZ^3$, where $(a,b)$ are the coordinates on $\A^2\setminus0$, and $[X:Y:Z]$ those on $\bP^2$. The locus $Z$ is $\Gm$-invariant under the $\Gm$-action
    \[
    \begin{tikzcd}
        \lambda\cdot\left((a,b),[X:Y:Z]\right)\arrow[r,mapsto] & \Phi_\lambda((a,b),[X:Y:Z]) \colon= \left((\lambda^4a,\lambda^6b),[\lambda^2X:\lambda^3Y:Z]\right).
    \end{tikzcd}
    \]
    A global invariant section of the relative dualizing sheaf of $Z \to \mathbb{A}^2 \smallsetminus 0$ is $dx/2y$, which transforms with weight $-1$ under the $\Gm$-action, that is if $\phi_\lambda(x,y)=(\lambda^2 x,\lambda^3 y)$, then
    $$
    \phi_\lambda^*\left(\frac{dx}{2y}\right) = \lambda^{-1}  \frac{dx}{2y}.
    $$
    Because $\phi_\lambda^*$ is a map $(\pi_*  \Phi_\lambda^*\omega_{\mathcal{E}/ \overline{\cM}_{1,1}}) \to \pi_* \omega_{\mathcal{E}/ \overline{\cM}_{1,1}}$, it follows that $\pi_* \omega_{\mathcal{E}/ \overline{\cM}_{1,1}}$ is the pullback of the geometric standard representation from $B \Gm$, that is $\sO(1)$.
\end{Example}

\subsection{The K-theory of the Weighted Blowup of $\bA^d$ along the Origin}\label{sub:K-theory-of-local-weighted-blowup}

Recall that weighted blowups along a regular embeddings (Definition~\ref{def:weighted-blowup}) are smooth-locally obtained by flat pull-back from the weighted blowup of $\bA^d$ along the origin, with some (non-necessarily distinct) positive weights $\bold{w}:=(w_1,\dots, w_d)$. In order to understand the K-theory of a weighted blowup, it is then fundamental to compute the K-theory of $Bl^{\bold{w}}_0\bA^d$, which we do next.
\begin{Corollary} 
Let $Bl^{\bold{w}}_0\bA^d$ be the weighted blowup of the origin $0$ in $\bA^d$ described in \S\ref{sub:blowup-0-An}, with positive (non-necessarily distinct) weights $\bold{w}:=(w_1,\dots, w_d)$. Let $E$ be the weighted normal cone of the origin in $\A^d$. Then,  
    $$  \lambda_{-1}^{\Gm}(E^{\vee}) = \prod_{i=1,\dots,d}(1-t^{-w_i})\in\Kring_{\Gm}(0)
    $$
    where $t$ is the class of the geometric standard representation in $\Kring(B\G_m) \simeq \Kring_{\G_m}(0)$. Moreover, we have the following isomorphism of rings:
    \[
    \Kring(Bl_0^{\bold{w}}\A^d)\simeq\frac{\Z[t,t^{-1}]}{\prod_{i}(1-t^{-w_i})}.
    \]
    where $t=[\sO_{Bl_0^{\bold{w}}\A^d}( -\mathcal{P}(w_1,\ldots,w_d))]$.
\end{Corollary}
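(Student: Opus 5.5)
We first compute the Euler class of the conormal bundle, and then identify $Bl_0^{\bold{w}}\A^d$ with a line bundle over the weighted projective stack, so that homotopy invariance and Theorem~\ref{prop: twisted weighted projection formula} give the ring.

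\textbf{Euler class.} The weighted normal cone $E$ of $0$ in $\A^d$ is the untwisted weighted vector bundle $\spec k[x_1',\dots,x_d']$ over a point, with $x_i'$ of degree $w_i$. So $E=\bigoplus_i L_i^{w_i}$ with each $L_i$ the trivial line bundle. Its dual $E^\vee$ therefore splits as a sum of trivial lines carrying $\Gm$-weights $-w_i$. We expect the sign reversal to be the only subtlety here, and we fix it using the convention of \S\ref{conventions}. The formula $\lambda_{-1}^{\Gm}(L^{a})=1-[L]t^{a}$ from \S\ref{subsec: euler class}, together with multiplicativity of $\lambda_{-1}$, then gives
\[
\lambda_{-1}^{\Gm}(E^\vee)=\prod_i(1-t^{-w_i}).
\]

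\textbf{The ring.} By \S\ref{sub:blowup-0-An}, the projection $p\colon Bl_0^{\bold w}\A^d\to\cP(w_1,\dots,w_d)$ is the total space of the line bundle $\sO(-1)$. Concretely,
\[
Bl_0^{\bold w}\A^d=\bigl[(\A^d\smallsetminus 0)\times\A^1_s/\Gm\bigr],
\]
with $s$ of weight $-1$, and the exceptional divisor is the zero section $s=0$. Passing to $\Gm$-equivariant K-theory on $\A^d\smallsetminus0$ and applying homotopy invariance (\S\ref{sec: homotopy invariance}, with smoothness so that $\Kring=\Kgroup$), we get that $p^*$ is an isomorphism. Theorem~\ref{prop: twisted weighted projection formula}, applied to $E$ over a point with $G$ trivial, then yields
\[
\Kring(\cP(\bold w))\simeq \Z[t^{\pm1}]\Big/\Bigl(\prod_i(1-t^{-w_i})\Bigr),
\]
where $t=[\sO(1)]$ is the pullback of the geometric standard representation of $B\Gm$.

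\textbf{Identifying $t$.} It remains to express $t$ through the exceptional divisor $V(s)=\cP(\bold w)$. The divisor is cut out by the coordinate $s$, which is a section of the line bundle $\sO(V(s))$ associated to the $\Gm$-character of weight $-1$. Hence
\[
\sO(-V(s))\simeq p^*\sO(\pm1).
\]
The main obstacle is fixing this sign consistently with the convention that the geometric standard representation has weight $1$, i.e.\ with the sign bookkeeping between the total space $[\A^1/\Gm]$ and its sheaf of sections used in the proof of Theorem~\ref{prop: twisted weighted projection formula}. We settle it as follows. The zero section of the tautological bundle $\sO(-1)$ has ideal sheaf isomorphic to $p^*\sO(1)$, the dual of the tautological bundle. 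Therefore $[\sO(-V(s))]=p^*t$, which gives the stated description of $t$.
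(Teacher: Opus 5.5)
Your proposal is correct and follows the paper's proof essentially step by step. You read off $\lambda_{-1}^{\Gm}(E^\vee)$ from the splitting of $E$ over a point, view the blowup as the total space of $\sO(-1)$ over $\cP(w_1,\ldots,w_d)$, and apply Theorem~\ref{prop: twisted weighted projection formula}; you then identify $t$ using the fact that the ideal sheaf of the zero section of a line bundle $L$ is the pullback of $L^\vee$, exactly as the paper does, the only difference being that you spell out (via the presentation $[(\A^d\smallsetminus 0)\times\A^1_s/\Gm]$) the homotopy-invariance step making $p^*$ an isomorphism, which the paper leaves implicit.
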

     
\begin{proof}
    The computation of  $\lambda_{-1}^{\Gm}(E)$ follows by observing that $E$ is the weighted vector bundle $\spec \Sym^{\bullet}(V)$ over a point, where $V=k^{\oplus d}$ has weights $(w_1,\dots,w_d)$. 
    
    The second isomorphism follows from the fact that, as explained in \S\ref{sub:blowup-0-An}, the weighted blowup is the total space of the line bundle $\sO_{\cP}(-1)$ over the exceptional divisor $\cP=\cP(w_1,\ldots,w_d)$, and from the formula for the K-theory of weighted projective bundles in Theorem~\ref{prop: twisted weighted projection formula}. We are only left to justify the equality $t=[\sO_{Bl_0^{\bold{w}}\A^d}(-\mathcal{P}(w_1,\ldots,w_d))]$. This follows from the fact that the pullback of the geometric standard representation to $\cP$ is $\sO_{\cP}(1)$ by Theorem~\ref{prop: twisted weighted projection formula}, together with the general fact that the pullback of a line bundle $L$ on a scheme $X$ along the projection $L \to X$ is canonically isomorphic to the dual of the ideal sheaf of the zero section of $X$ in $L$.
\end{proof}

\section{Equivariant Refined Pullbacks and Operational K-theory}\label{sec: gysin and operational}

In this section, we extend the results on $\Gm^n$-equivariant operational K-theory from \cite{AndersonPayne-OperationalKtheory}, to other groups.

\subsection{Refined Gysin morphism}\label{sub:refined-gysin}

Let $G$ be a smooth affine group scheme of finite type over $k$. Consider a cartesian square
\[
\begin{tikzcd}
X' \arrow[r, "f'"] \arrow[d, "g'"] & Y' \arrow[d, "g"] \\
X \arrow[r, "f"] & Y
\end{tikzcd}
\]
of $G$-equivariant morphisms. Because Tor-sheaves carry a canonical equivariant structure, the usual construction of refined Gysin morphism in K-theory works equivariantly. Given an $f$-perfect equivariant complex $\sP_\bullet$ on $X$, there is a pullback map $f^{\sP_\bullet}: \Kgroup^{G}(Y') \to  \Kgroup^{G}(X')$ defined by
\[
f^{\sP_\bullet}(\sF) = \sum_i (-1)^i \big[ \mathcal{T}\!or_i^Y(\sP_\bullet, \sF) \big].
\]

which only depends on the $K$-class of $\sP_\bullet$. When $f$ has finite Tor-dimension, we simply write 
$
f^! = f^{\sO_X}.
$
For example, this is the case when $f$ is flat or a regular embedding. Note in this case $f^!$ coincides with the derived pullback. If $f$ is perfect and $g=\Id$, we also write $f^*$ in place of $f^!$, as in this case $f^{!}$ is the usual pullback in $K$-theory. 

The next proposition summarizes the main properties of equivariant Gysin maps.

\begin{Prop}\label{prop: properties pullbacks}
\begin{enumerate}
    \item\label{point 1: prop: properties pullbacks} Consider the $G$-equivariant cartesian square
    \[
    \begin{tikzcd}[row sep=large, column sep=large]
    X'' \arrow[r] \arrow[d, "h'"'] & Y'' \arrow[d, "h"] \\
    X' \arrow[r, "f'"] \arrow[d, "g'"'] & Y' \arrow[d, "g"] \\
    X \arrow[r, "f"] & Y
    \end{tikzcd}
    \]
   where $h$ is proper and $f$ is either flat or a closed embedding.
    Let $\sP_\bullet$ be an equivariant $f$-perfect complex on $X$. Then
    \[
    f^{\sP_\bullet} \circ h_* = h'_* \circ f^{\sP_\bullet}.
    \]
    \item \label{point 2: prop: properties pullbacks}
    Next, consider the following $G$-equivariant diagram of cartesian squares:
    \[
    \begin{tikzcd}[row sep=large, column sep=large]
    X'' \arrow[r] \arrow[d] & Y'' \arrow[r] \arrow[d] & Z'' \arrow[d, "h"] \\
    X' \arrow[r] \arrow[d] & Y' \arrow[r] \arrow[d] & Z'  \\
    X \arrow[r,"f"] & Y  & 
    \end{tikzcd}
    \]
    where $f$ and $h$ are either flat or closed embeddings.  Let $\sP_\bullet$ be an equivariant $f$-perfect complex on $X$, and let $\sQ_\bullet$ be an equivariant $h$-perfect complex on $Z''$. 
    Then, for any class $\xi \in K_\circ^{G}(Y')$, we have
    \[
    f^{\sP_\bullet} \circ h^{\sQ_\bullet} (\xi)
    =
    h^{\sQ_\bullet} \circ f^{\sP_\bullet} (\xi)
    \in K_\circ^{G}(X'').
    \]
    \item\label{point 3: prop: properties pullbacks}
    Suppose \(f : X \to Y\) factors as a closed embedding 
    \(\iota: X \hookrightarrow M\) followed by a smooth projection \(p: M \to Y\). Let \(\sP_\bullet\) be an equivariant $f$-perfect complex on \(X\). 
    For any equivariant morphism \(Y' \to Y\), let $p': M \times_Y Y' \to Y'$ be the pullback of $p$. Then, we have 
    $
    f^{ \sP_\bullet} = \iota^{ \sP_\bullet} \circ (p')^*
    $
    as maps
    $
    K_\circ^{G}(Y') \to K_\circ^{G}(X').
    $
\end{enumerate}
\end{Prop}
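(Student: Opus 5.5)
The plan is to work with explicit representatives and reduce each statement to an identity of $\mathcal{T}\!or$-sheaves or of spectral sequences that holds already at the level of equivariant complexes. Since Tor-sheaves, derived tensor products and higher direct images of equivariant objects carry canonical $G$-equivariant structures, the non-equivariant arguments of \cite{AndersonPayne-OperationalKtheory} (and ultimately of SGA6) should go through. The only points to check are that every morphism and quasi-isomorphism used is $G$-equivariant, and that equivariant objects admit the resolutions needed. Throughout, $f^{\sP_\bullet}(\sF)$ is represented by $\sP_\bullet\otimes^{\mathbf L}_{\sO_Y} \sF$, viewed as a bounded equivariant complex with coherent cohomology on $X'$.

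\textbf{Point (1).} I would use the equivariant projection formula for the derived tensor product,
\[
\mathbf{R}h'_*\bigl(g'^{*}\sP_\bullet\otimes^{\mathbf L}_{\sO_{Y'}} \sF\bigr)\simeq \sP_\bullet\otimes^{\mathbf L}_{\sO_Y}\mathbf{R}h_*\sF .
\]
When $f$ is flat this is flat base change along $f$, and the isomorphism is canonical, hence $G$-equivariant. When $f$ is a closed embedding I would argue as follows. Work locally on $Y$ and resolve $\sP_\bullet$ by $Y$-flat modules; this uses $f$-perfectness, and the resolution only needs to exist locally, since the identity we want is an isomorphism of complexes that glues. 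Then the tensor product commutes with $\mathbf{R}h_*$ by the ordinary projection formula applied to $\mathcal{O}_Y$-flat modules, because the base change square is cartesian and $h$ is proper. Taking alternating sums of cohomology gives $f^{\sP_\bullet}h_* = h'_* f^{\sP_\bullet}$ in $\Kgroup^G$.

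\textbf{Point (2).} This is the commutativity of two derived tensor products. Unwinding the definitions, both sides are represented by
\[
\sP_\bullet\otimes^{\mathbf L}_{\sO_Y}\sF\otimes^{\mathbf L}_{\sO_{Z'}}\sQ_\bullet
\]
on $X''$, computed in two orders. The required isomorphism is the associativity and commutativity of $\otimes^{\mathbf L}$, which is canonical and therefore equivariant. The cartesianness of all squares ensures that the Tor-independence needed to identify the iterated pullbacks holds at the level of supports. So the proof reduces to checking that the two iterated Tor spectral sequences converge to the same class in $\Kgroup^G(X'')$. This holds because both compute the same bounded complex.

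\textbf{Point (3).} The key step is the identity
\[
\iota^{\sP_\bullet}\circ p'^{*}(\sF) \simeq \sP_\bullet\otimes^{\mathbf L}_{\sO_M} p^{*}\sF \simeq \sP_\bullet\otimes^{\mathbf L}_{\sO_Y}\sF .
\]
Since $p$ is flat, $p'^{*}\sF$ is the flat pullback, and $\mathcal{T}\!or^M(\sP_\bullet,p^*\sF)=\mathcal{T}\!or^Y(\sP_\bullet,\sF)$. This is transitivity of Tor along a flat map, and it is again canonical.

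The main obstacle is point (1) in the closed embedding case. There one needs a representative of $\sP_\bullet$ that computes the Tor sheaves, so that the projection formula and base change for the proper map $h$ apply. Equivariant flat resolutions need not exist globally. I would first try to avoid them by working entirely in the derived category of equivariant quasi-coherent sheaves, where $\otimes^{\mathbf L}$ and $\mathbf{R}h_*$ are defined and the projection formula holds as a canonical natural isomorphism that can be checked after forgetting the $G$-action. Only the final passage to $\Kgroup^G$ would use boundedness, which is guaranteed by $f$-perfectness.
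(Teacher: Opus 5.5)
Your proposal is correct and follows essentially the paper's route: the paper simply says the proofs of Anderson--Payne's Lemmas 3.1--3.3 carry over verbatim, because Tor-sheaves and the natural isomorphisms involved carry canonical $G$-equivariant structures. Those isomorphisms are the projection formula, flat base change, associativity of $\otimes^{\mathbf L}$, and transitivity of Tor along the flat map $p$, which is exactly what you spell out. One correction for the closed-embedding case: you must resolve $f_*\sP_\bullet$ by flat $\sO_Y$-modules, since nonzero $Y$-flat $\sO_X$-modules usually do not exist; consequently $\sP_\bullet\otimes^{\mathbf L}_{\sO_Y}\sF$ naturally lives on $Y''$ with cohomology supported on $X''$, not as a complex on $X''$, and you should compare Euler characteristics in the $K$-group of equivariant coherent sheaves on $Y'$ supported on $X'$, which equals $\Kgroup^G(X')$ by d\'evissage.
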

\begin{proof}
    Identical to the proof of \cite[Lemmas 3.1, 3.2 and 3.3]{AndersonPayne-OperationalKtheory}.
\end{proof}

\begin{Thm}[Excess Intersection Formula]\label{prop: excess intersection formula}
    Suppose given a $G$-equivariant cartesian square of separated schemes of finite type over $k$ 
    \[
    \begin{tikzcd}
        X'\arrow[r,"i'"]\arrow[d,"f'"] & Y'\arrow[d,"f"]\\
        X\arrow[r,"i"] & Y
    \end{tikzcd}
    \]
    where $i$ and $i'$ are $G$-equivariant regular immersions with associate normal bundles $N$ and $N'$ respectively. Consider the induced exact sequence of $G$-equivariant bundles over $X'$
    \[
    \begin{tikzcd}
        0\arrow[r] & E^{\vee} \arrow[r] & f'^*N^{\vee}\arrow[r] & N'^{\vee}\arrow[r] & 0
    \end{tikzcd}
    \]
    where $E$ is the excess intersection bundle. Then, for every $G$-equivariant morphism $g:Y'' \rightarrow Y'$ and $\alpha\in\Kgroup^G(Y'')$ we have
    \begin{equation}
       i^! (\alpha) = (i')^!(\alpha) \cdot \lambda_{-1}((g')^{*}E^{\vee})
    \end{equation}
    in $\Kgroup^G(X'')$ where $g':X'':=Y''\times_{Y} X \simeq Y''\times_{Y'} X' \rightarrow X'$ is the natural projection.
\end{Thm}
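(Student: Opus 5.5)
We prove the excess intersection formula by reducing it to a local Koszul-complex computation. Both sides only depend on the classes of the complexes $\sO_X$ (for $i^!$) and $\sO_{X'}$ (for $(i')^!$) and on Tor-sheaves, which carry canonical $G$-equivariant structures. First, we rewrite $i^!$ through the factorization of the square: since $Y''\times_Y X\simeq Y''\times_{Y'}X'$, the refined pullback $i^!$ applied to $\alpha\in\Kgroup^G(Y'')$ equals $(i')^{L f'^*\sO_X}$, the refined pullback along $i'$ with respect to the $i'$-perfect complex $Lf^*\sO_X$ (viewed as a complex on $Y'$ supported on $X'$). This uses the associativity of derived tensor products, $\sO_X\otimes^L_{\sO_Y}\sF\simeq (\sO_X\otimes^L_{\sO_Y}\sO_{Y'})\otimes^L_{\sO_{Y'}}\sF$ for $\sF$ on $Y''$ (which is an $\sO_{Y'}$-module), giving a Tor spectral sequence. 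Its equivariance is automatic because all constructions are functorial.

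Second, we identify the class of $Lf^*\sO_X$ as a $G$-equivariant complex on $X'$ in $\Kgroup$-theory relative to $i'$. Locally $X$ is cut out by a regular sequence, so $\sO_X$ is resolved by the Koszul complex $\Lambda^\bullet N^\vee$. Pulling back, $Lf^*\sO_X$ has cohomology sheaves $\mathcal{T}\!or_j^{Y}(\sO_X,\sO_{Y'})\simeq \Lambda^j E^\vee$, which are supported on $X'$ and are locally free $\sO_{X'}$-modules. This is the standard computation: $f'^*N^\vee\to N'^\vee$ is surjective with kernel $E^\vee$, and the Koszul complex of a sequence whose first part is regular on $Y'$ and whose remaining part vanishes on $X'$ has homology given by exterior powers of the excess. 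The isomorphisms are canonical, hence $G$-equivariant, because they come from the canonical filtration. Therefore $[Lf^*\sO_X]=\sum_j(-1)^j[\Lambda^jE^\vee]$ placed on $X'$, that is, $\lambda_{-1}(E^\vee)\cdot[\sO_{X'}]$ in the Grothendieck group of $i'$-perfect complexes.

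Third, since $f^{\sP_\bullet}$ depends only on the K-class of $\sP_\bullet$ and is additive, we get $i^!(\alpha)=\sum_j(-1)^j (i')^{\Lambda^jE^\vee}(\alpha)$. For a locally free $\sO_{X'}$-module $V$, we have $(i')^{V}(\alpha)=(g')^*[V]\cdot(i')^!(\alpha)$ by the projection formula for Tor, since $\mathcal{T}\!or^{Y'}(V,\sF)\simeq V\otimes_{\sO_{X'}}\mathcal{T}\!or^{Y'}(\sO_{X'},\sF)$. Summing gives the claimed formula.

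The main obstacle is the second step: making the local Koszul identification of the Tor sheaves with $\Lambda^jE^\vee$ global and $G$-equivariant. This is needed because a global regular sequence need not exist. I would handle it by working with the complex itself rather than a chosen sequence. I would use the canonical isomorphism $\mathcal{T}\!or_1^Y(\sO_X,\sO_{Y'})\simeq E^\vee$, coming from the conormal sequences, together with the multiplicative (graded-commutative) structure on Tor, and check that the induced map $\Lambda^jE^\vee\to\mathcal{T}\!or_j$ is an isomorphism étale-locally. If this step resists, I would fall back on deformation to the normal cone as in Fulton, adapted equivariantly.
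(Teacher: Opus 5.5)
Your proposal is correct and is essentially the paper's argument. Your associativity $\sO_X\otimes^L_{\sO_Y}\sF\simeq(\sO_X\otimes^L_{\sO_Y}\sO_{Y'})\otimes^L_{\sO_{Y'}}\sF$ is the Grothendieck spectral sequence the paper uses for the composite of $-\otimes_{\sO_Y}\sO_{Y'}$ and $-\otimes_{\sO_{Y'}}M$, and your Steps 2--3 match its identification $\mathcal{T}\!or_k^Y(\sO_{Y'},\sO_X)\simeq\Lambda^kE^\vee$ followed by splitting off the locally free factor before taking Euler characteristics; the step you flag as the main obstacle is handled in the paper by citing Thomason's Lemma 3.2 (with equivariance by naturality), and your local Koszul sketch is a valid way to supply it.
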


\begin{proof}
    It is well known that there exists a natural isomorphism
    \[
        \mathrm{Tor}_k^{Y}(\sO_{Y'},\sO_X)\simeq\Lambda^k E^{\vee}
    \]
    as $\sO_{X'}$-modules by computing the $\mathrm{Tor}$-functor of $- \otimes_{\sO_{Y}} \sO_{Y'}$ using the Koszul complex associated to the regular embedding $i$, see \cite[Lemma 3.2]{Thomason-blowup} for a more detailed explanation. Moreover, this isomorphism is $G$-equivariant, by naturality. Without loss of generality, we assume $\alpha=[M]$ for $M$ a coherent sheaf on $Y''$.  Consider now the two functors
    \begin{itemize}
        \item $a(-):=- \otimes_{\sO_Y} \sO_{Y'}: \{ \sO_Y- {\rm modules}\} \rightarrow  \{ \sO_{Y'}-{\rm modules}\}  $;
        \item $b(-):=- \otimes_{\sO_{Y'}} M: \{ \sO_{Y'}- {\rm modules}\} \rightarrow  \{ \sO_{Y''}- {\rm modules}\} $;
    \end{itemize}
      and the convergent Grothendieck-Serre spectral sequence associated to the composition $b\circ a$ 
    $$ \operatorname{Tor}_j^{Y'}(M, \operatorname{Tor}_k^{Y}(\sO_{Y'},\sO_X)) \implies \operatorname{Tor}_{j+k}^Y(M,\sO_X). $$
    Since $ \mathrm{Tor}_k^{\sO_Y}(\sO_{Y'},\sO_X)$ is locally free as $\sO_{X'}$-module, we get
    $$  \operatorname{Tor}_j^{Y'}(M, \operatorname{Tor}_k^{Y}(\sO_{Y'},\sO_X)) =  \operatorname{Tor}_j^{Y'}(M, \sO_{X'}) \otimes_{\sO_{X'}} \operatorname{Tor}_k^{Y}(\sO_{Y'},\sO_X))$$
    which implies that
    $$ \operatorname{Tot}\left( \operatorname{Tor}_{\bullet}^{Y'}(M, \operatorname{Tor}_{\bullet}^{Y}(\sO_{Y'},\sO_X)\right)\simeq \operatorname{Tor}_{\bullet}^{Y'}(M, \sO_{X'}) \otimes_{\sO_{X'}} \operatorname{Tor}_{\bullet}^{Y}(\sO_{Y'},\sO_X)$$
    where $\operatorname{Tot}$ denotes the total complex.
    Applying the Euler characteristic on $X''$ to the isomorphism above, we get 
    $$\chi\left(\operatorname{Tot}\left(\operatorname{Tor}_{\bullet}^{Y'}\left(M, \operatorname{Tor}_{\bullet}^{Y}(\sO_{Y'},\sO_X)\right)\right)\right) =\chi(\operatorname{Tor}_{\bullet}^{Y'}(M, \sO_{X'})) \cdot\chi(\operatorname{Tor}_{\bullet}^{Y}(\sO_{Y'},\sO_X))$$ 
    whereas the convergence of the spectral sequence gives us that
    $$ \chi\left(\operatorname{Tot}\left(\operatorname{Tor}_{\bullet}^{Y'}\left(M, \operatorname{Tor}_{\bullet}^{Y}(\sO_{Y'},\sO_X)\right)\right)\right)= \chi \left( \operatorname{Tor}_{\bullet}^Y(M,\sO_X) \right).$$
    The statement follows by noticing that 
    \begin{itemize}
        \item by definition, $i^!([M])=  \chi \left( \operatorname{Tor}_{\bullet}^Y(M,\sO_X) \right)$;
        \item by definition, $(i')^!([M])=  \chi \left( \operatorname{Tor}_{\bullet}^{Y'}(M,\sO_{X'}) \right)$;
        \item $\lambda_{-1}(E^{\vee})=\sum (-1)^k \Lambda^k E^{\vee} = \chi(\operatorname{Tor}_{\bullet}^{Y}(\sO_{Y'},\sO_X))$.
    \end{itemize}
\end{proof}

The self-intersection formula (Theorem~\ref{prop: self intersection formula}) follows from the above theorem taking $Y'=X$. 

A version of Theorem~\ref{prop: excess intersection formula} already appears in \cite[Theorem 3.1]{Thomason-blowup}. Assuming that the map $f$ in the statement is lci, the excess intersection formula in Theorem~\ref{prop: excess intersection formula} implies \cite[Theorem 3.1]{Thomason-blowup}.

We end this subsection with some results which will turn out to be useful later on.

\begin{Lemma}\label{lem:lci-factorization}
    Let $f:\cX \rightarrow \cY$ be a morphism of smooth algebraic stacks and suppose that $\cY$ is the quotient of a smooth separated algebraic space $Y$ by $G$. Then $f$ is the composition of a regular closed embedding followed by a smooth morphism.
\end{Lemma}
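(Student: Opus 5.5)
The plan is to imitate the standard graph factorization for lci morphisms of smooth schemes, with the quotient presentation $\cY=[Y/G]$ playing the role of an ambient smooth space. Set $\cM:=\cX\times_{B G}\,[Y/G]$, or more concretely pull $Y\to\cY$ back along $f$. Write $X:=\cX\times_{\cY}Y$. The projection $X\to\cX$ is a $G$-torsor, so $X$ is a smooth algebraic space with a free $G$-action and $\cX\simeq[X/G]$. The morphism $f$ is then induced by a $G$-equivariant map $\tilde f:X\to Y$. We factor $\tilde f$ as the graph
\[
X\xrightarrow{\ \Gamma\ } X\times Y\xrightarrow{\ \mathrm{pr}_2\ } Y,
\]
with $G$ acting diagonally on $X\times Y$. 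Taking quotients by $G$ gives
\[
\cX=[X/G]\longrightarrow[(X\times Y)/G]\longrightarrow[Y/G]=\cY .
\]

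The second map is the base change of $X\to\operatorname{Spec}k$ along the quotient. More precisely, $[(X\times Y)/G]\to[Y/G]$ is a fibration with fiber $X$, smooth-locally on $\cY$ isomorphic to $X\times(\text{chart})$. Since $X$ is smooth, this map is smooth. To see this formally, pull back along the smooth atlas $Y\to\cY$: it becomes $X\times Y\to Y$, which is smooth. Smoothness descends, so the second map is smooth.

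For the first map, $\Gamma$ is a closed immersion because $Y$ is separated. The graph of a morphism to a separated space is closed; here $\Gamma$ is the base change of the diagonal $\Delta_Y$ along $\tilde f\times\mathrm{id}$. Since $X$ and $X\times Y$ are both smooth, a closed immersion between smooth spaces is a regular embedding of codimension $\dim Y$. This can be checked étale locally, where a closed immersion of smooth schemes is cut out by part of a regular system of parameters. The immersion $\Gamma$ is $G$-equivariant, and being a regular closed embedding is smooth-local on the target. Hence the induced map of quotient stacks $[X/G]\to[(X\times Y)/G]$ is a regular closed embedding: its pullback along the atlas $X\times Y\to[(X\times Y)/G]$ is exactly $\Gamma$.

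The main obstacle is the first reduction. It must be checked that $X=\cX\times_\cY Y$ is a smooth separated algebraic space, rather than just a stack, and that $f$ genuinely comes from an equivariant map. Representability holds because $Y\to\cY$ is a $G$-torsor, hence representable, and its base change $X\to\cX$ is therefore a $G$-torsor over $\cX$ with total space an algebraic space. Smoothness of $X$ follows from the smoothness of $\cX$ and of $G$. Separatedness of $X$ is not actually needed: $\Gamma$ is a closed immersion using only separatedness of $Y$, and regularity only uses smoothness of the source and target. This is where the hypothesis on $Y$ enters, so I would write this step carefully and treat the rest as routine descent.
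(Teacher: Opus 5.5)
Your proof is correct and is essentially the paper's argument: pull $f$ back along the atlas $Y\to\cY$, factor the resulting $G$-equivariant map $X=\cX\times_{\cY}Y\to Y$ through its graph $X\to X\times Y\to Y$, and descend to the quotients using that regular closed embeddings and smooth morphisms are smooth-local on the target. Here the graph is closed because $Y$ is separated, and $G$ acts diagonally on $X\times Y$. The only imprecision is calling $[(X\times Y)/G]\to\cY$ a base change of $X\to\operatorname{Spec}k$; it is really the base change of $[X/G]\to BG$ along $\cY\to BG$, and your check on the atlas makes this harmless.
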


If $\cY$ is a separated smooth algebraic space, then this follows by the usual graph argument, that is considering the decomposition of $f$ given by the graph of $f$. The hypothesis $\cY$ separated is used to ensure that the graph map is a closed embedding. If $\cY$ is not separated we proceed as follows.

\begin{proof}

We can consider the pull-back of $f$ through the atlas $Y\rightarrow \cY$, which we denote by $f_Y:\cX_Y:=\cX \times_{\cY} Y\rightarrow Y$. Considering the graph factorization of $f_Y$, we get the composition 
 $$ \cX_Y \longrightarrow \cX_Y \times Y \longrightarrow Y$$
 where the first morphism is a regular closed embedding (because $Y$ is a separated smooth algebraic space) and the second one is a smooth morphism (since $G$, and thus $\cX_Y$, is smooth). A straightforward verification shows that if we endow $\cX_Y\times Y$ with the diagonal action of $G$, then the graph of $f_Y$ is $G$-equivariant (since $f_Y$ is). By taking the quotient by $G$ (see for instance Theorem 4.1 of \cite{Rom05}) of the composition, we get the desired decomposition of $f$, since being a regular closed embedding and being smooth are properties which are smooth-local on the target.
\end{proof}

Before going the next result, let us recall a classical piece of notation. Given $f:\cX \rightarrow \cY$ a morphism of pure dimensional algebraic stacks of finite type over a field, we denote by $d(f)$ the quantity $\dim \cX - \dim \cY$.
\begin{Cor}\label{cor:restriction of Gysin}
    Suppose given a cartesian square of connected smooth algebraic stacks 
    \[
    \begin{tikzcd}
        \cX'\arrow[r,"g'"]\arrow[d,"f'"] & \cX\arrow[d,"f"]\\
        \cY'\arrow[r,"g"] & \cY
    \end{tikzcd}
    \]
    such that the relative dimension of $f$ and $f'$ are the same, i.e. $d(f)=d(f')$. Suppose moreover that $\cY$ is a quotient of a smooth separated algebraic space $Y$ by $G$. Then for every $\cY''\rightarrow\cY'$ we have that $f^!=(f')^!$ as morphisms $\Kgroup(\cY'') \rightarrow \Kgroup(\cX'')$ where $\cX''\simeq \cX\times_{\cY} \cY'' \simeq \cX' \times_{\cY'} \cY''$.
\end{Cor}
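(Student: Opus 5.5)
Since $f$ is a morphism of smooth stacks, it has finite Tor-dimension, so $f^!=f^{\sO_{\cX}}$. The plan is to factor $f$ using Lemma~\ref{lem:lci-factorization} as $f=p\circ\iota$, where $\iota:\cX\hookrightarrow\cM$ is a regular closed embedding and $p:\cM\to\cY$ is smooth. Then we base change this factorization along $g$ to get $f'=p'\circ\iota'$, with $\cM'=\cM\times_{\cY}\cY'$, $p'$ smooth and $\iota':\cX'\hookrightarrow\cM'$ a closed embedding. By Proposition~\ref{prop: properties pullbacks}\eqref{point 3: prop: properties pullbacks}, applied after passing to the $G$-equivariant presentation, we have $f^!=\iota^!\circ(p_{\cY''})^*$ and $(f')^!=(\iota')^!\circ(p_{\cY''})^*$. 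Both compositions use the same flat pullback along $\cM''=\cM\times_{\cY}\cY''\to\cY''$, since $\cM'\times_{\cY'}\cY''\simeq\cM''$. The statement therefore reduces to showing $\iota^!=(\iota')^!$ as maps $\Kgroup(\cM'')\to\Kgroup(\cX'')$.

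For this reduced statement we want to apply the excess intersection formula, Theorem~\ref{prop: excess intersection formula}, to the cartesian square formed by $\iota,\iota'$ and $\cM'\to\cM$. This requires $\iota'$ to be a regular embedding. Since $\cX'$ and $\cM'$ are smooth ($\cM'$ being smooth over the smooth $\cY'$), the closed embedding $\iota'$ of smooth stacks is regular. Its codimension is
\[
\dim\cM'-\dim\cX'=\dim\cY'+\operatorname{rel.dim}(p)-\dim\cX'=\operatorname{rel.dim}(p)-d(f').
\]
The codimension of $\iota$ is $\operatorname{rel.dim}(p)-d(f)$. The hypothesis $d(f)=d(f')$ makes these equal, so the normal bundles $N'$ and $f'^*N$ have the same rank. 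The surjection $f'^*N^\vee\to N'^\vee$ is then an isomorphism, the excess bundle $E$ vanishes, and $\lambda_{-1}(E^\vee)=1$. The excess formula then gives $\iota^!=(\iota')^!$.

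The main obstacle is bookkeeping: making sure the paper's results, which are stated for $G$-equivariant separated schemes, apply to the stacks here. We pass to $G$-equivariant objects over $Y$. By the proof of Lemma~\ref{lem:lci-factorization}, $\cM=[(\cX_Y\times Y)/G]$, and we pull everything back to $Y$ so that all maps become $G$-equivariant. We also need to verify two things: that $\iota'$ is regular (smoothness of source and target, checked smooth-locally), and that the sequence $0\to E^\vee\to f'^*N^\vee\to N'^\vee\to0$ has surjective right map for a closed embedding of base change type. If $\cX_Y$ is an algebraic space rather than a scheme, we would invoke the excess formula's Tor-computation proof, which is local, instead of the scheme statement verbatim. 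Finally, connectedness guarantees pure dimension, so $d(f)$ and $d(f')$ are well defined.
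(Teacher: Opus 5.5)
Your proposal is correct and follows the paper's proof essentially step for step. You factor $f$ as a regular embedding followed by a smooth morphism via Lemma~\ref{lem:lci-factorization}, reduce to the closed embeddings using Proposition~\ref{prop: properties pullbacks}\eqref{point 3: prop: properties pullbacks}, and conclude from Theorem~\ref{prop: excess intersection formula}, because $d(f)=d(f')$ forces equal codimensions and hence a trivial excess bundle; your explicit codimension count and the surjectivity of $f'^*N^\vee\to N'^\vee$ simply spell out what the paper leaves implicit.
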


\begin{proof}
 First of all, we can use Lemma~\ref{lem:lci-factorization} to decompose $f$ as a regular closed embedding $i$ followed by a smooth morphism $p$ and let us denote by $i'$ and $p'$ the pull-back of $i$ and $p$ respectively through $g$. Thanks to the smoothness of $p$, we get that $d(p)=d
 (p')$ and consequently $d(i)=d(i')$. Moreover, $i'$ is a closed embedding between smooth algebraic stacks, thus it is a regular closed embedding. By Proposition~\ref{prop: properties pullbacks}\eqref{point 3: prop: properties pullbacks}, we get that it is enough to show the result for $i$ and $i'$. Finally, this follows from Theorem~\ref{prop: excess intersection formula}, since $d(i)=d(i')$ implies that the two closed embedding have the same codimension, i.e. the excess intersection bundle is trivial.
\end{proof}

\subsection{$G$-equivariant operational K-theory}\label{sub:operational-K-theory}
We start by briefly recalling the definitions.

\begin{Def}\label{def: operational K-theory}
    Let $f \colon X \to Y$ be a $G$-equivariant morphism of schemes. For each equivariant morphism $g \colon Y' \to Y$, form the cartesian square
\begin{equation}\label{eqn: cartesian diagram}
\begin{tikzcd}
X' \arrow[r, "f'"] \arrow[d, "g'"] & Y' \arrow[d, "g"] \\
X \arrow[r, "f"] & Y
\end{tikzcd}
\end{equation}
An element of the operational equivariant $K$-theory $\mathrm{op}\Kring_{G}(f:X \to Y)$ consists of operators
\[
c_g \colon \Kgroup^{G}(Y') \to \Kgroup^{G}(X')
\]
for all $g \colon Y' \to Y$, which commute with proper pushforward and refined Gysin maps; that is, they satisfy properties (A1) and (A2) in \cite[Definition 4.1]{AndersonPayne-OperationalKtheory}, with $T$ replaced by $G$. 
\end{Def}

\begin{notation}
    We will omit $f$ and simply write $\opK_{G}(X \to Y)$ when there is no risk of confusion. We write $\mathrm{op}\Kring_{G}(X)$ for the operational $K$-group of the identity morphism on $X$.
\end{notation}
Note that $\mathrm{op}\Kring_{G}(X)$ is an associative ring with unit under composition of endomorphisms, and it is contravariant in $X$.

The $G$-equivariant operational $K$-groups admit various operations and satisfy properties similar to those in (O1)--(O3) and (P1)--(P7) in \cite[\S4.1. and \S4.2]{AndersonPayne-OperationalKtheory}, with $T$ replaced by $G$.

By Proposition~\ref{prop: properties pullbacks}, if $f \colon X \to Y$ is a $G$-equivariant flat morphism or a regular embedding, then $f^!$ defines an element of $\opK_{G}(X \to Y)$, called the \emph{canonical orientation} of $f$ and denoted by $[f]$. 

In general, given $f:X \to Y$ and $g: Y \to Z$ of finite Tor-dimension, it is not clear whether $f^! \circ g^! = (g \circ f)^!$. However, by Proposition~\ref{prop: properties pullbacks}\eqref{point 3: prop: properties pullbacks}, this equality holds when $f$ is a regular embedding and $g$ is a smooth morphism.

\begin{Prop}\label{prop: composition with smooth is iso in Ktheory}
Let $X \to Y$ be an arbitrary $G$-equivariant morphism, and let $g \colon Y \to Z$ be smooth and equivariant. Then,
\[
\cdot [g] \colon \opK_{G}(X \to Y) \to \opK_{G}(X \to Z)
\]
is an isomorphism.
\end{Prop}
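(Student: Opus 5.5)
This is the $G$-equivariant analogue of \cite[Proposition 4.3]{AndersonPayne-OperationalKtheory}, and we would follow the same strategy: write down an explicit inverse built from the diagonal of $g$. The argument only uses the formal properties of refined Gysin maps in Proposition~\ref{prop: properties pullbacks} together with axioms (A1)--(A2) of Definition~\ref{def: operational K-theory}. None of these depends on $G$ being a torus.

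\textbf{Setting up the inverse.} Let $f\colon X\to Y$. Since $g$ is smooth, the diagonal $\delta\colon Y\to Y\times_Z Y$ is a $G$-equivariant regular embedding (we assume, as throughout, that schemes are separated, so $\delta$ is a closed embedding). Hence it carries a canonical orientation $[\delta]\in\opK_G(Y\to Y\times_Z Y)$.

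Given $d\in\opK_G(X\to Z)$, we form $d'\in\opK_G(X\times_Z Y\to Y)$ by pulling $d$ back along the second projection $p_2\colon Y\times_Z Y\to Z$. Concretely, for $h\colon Y'\to Y$ we set $d'_h:=d_{g\circ h}$, and the fibre products $(X\times_Z Y)\times_Y Y' = X\times_Z Y'$ match up. We then define
\[
\Phi(d):=[\delta]\cdot d'\in\opK_G(X\to Y),
\]
where $X\times_Z Y\times_{Y\times_Z Y}Y\simeq X$ via the graph of $f$. This is the usual composition of bivariant classes $\Phi(d)=\gamma_f^*\circ d'$, with $\gamma_f\colon X\to X\times_Z Y$ the graph, a regular embedding obtained as the pullback of $\delta$.

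\textbf{The two composites.}

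First, for $c\in\opK_G(X\to Y)$ we check that $\Phi(c\cdot[g])=c$. Unwinding, we must show that the composite of $g^*$ pulled back to $Y'\times_Z Y$, then $c$ applied over the second factor, then $\gamma^!$, equals $c$. Commutativity of $c$ with the refined Gysin map $\delta^!$ (axiom (A2)) moves $\delta^!$ past $c$. This reduces us to the identity $\delta^!\circ p_1^*=\mathrm{id}$ on $\Kgroup^G(Y')$ for the refined pullbacks. That identity holds by Proposition~\ref{prop: properties pullbacks}\eqref{point 3: prop: properties pullbacks}: $\delta$ followed by the smooth projection $p_1$ is the identity, and $p_1\circ\delta=\mathrm{id}$ has Tor-dimension zero, so $(p_1\circ\delta)^!=\mathrm{id}$.

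Second, for $d\in\opK_G(X\to Z)$ we check that $\Phi(d)\cdot[g]=d$. Writing out the composite, $d$ is applied to the class $p_2^*\alpha$ on $Y'\times_Z Y$, followed by $\gamma^!$. Here we use that $d$ commutes with the flat pullback $p_2^*$ and with $\delta^!$ (axiom (A2) again). This reduces to the same identity $\delta^!\circ p_2^*=\mathrm{id}$, which holds by the symmetric argument.

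\textbf{Main obstacle.} The hard part is the bookkeeping of the fibre squares: verifying that each operator is evaluated on the correct base change, and that the refined Gysin maps $\delta^!$ appearing in each composite are the same map, so that axiom (A2) applies as stated. In Anderson--Payne this is carried out diagrammatically, and we expect their diagrams to transfer verbatim, since every object is $G$-equivariant and every map used (diagonal, projections, graph) is $G$-equivariant.

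The one genuinely new input is the claim $(p_1\circ\delta)^!=\delta^!\circ p_1^*$ equivariantly. This is exactly Proposition~\ref{prop: properties pullbacks}\eqref{point 3: prop: properties pullbacks}, so no torus-specific argument (such as a reduction via \eqref{eqn: iso from Gln to T}) is needed.
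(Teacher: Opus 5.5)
Your proposal is correct and is the same argument the paper uses: the paper's proof only says that the proof of Anderson--Payne's Proposition~4.3 carries over verbatim, and that proof is the diagonal construction of the inverse $d\mapsto (f\times\mathrm{id}_Y)^*[\delta]\cdot g^*d$; your check that it needs only axioms (A1)--(A2) and Proposition~\ref{prop: properties pullbacks}\eqref{point 3: prop: properties pullbacks} is exactly why it transfers. There are three minor slips: $[\delta]\cdot d'$ is not a well-formed product and should be read as $(f\times\mathrm{id}_Y)^*[\delta]\cdot d'$ (as your $\gamma_f$ reformulation indicates), $d'$ is the pullback of $d$ along $g$ rather than along a ``second projection'', and the second composite needs only the commutation of $d$ with flat pullback.
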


\begin{proof}
    Identical to that of \cite[Proposition 4.3]{AndersonPayne-OperationalKtheory}.
\end{proof}

\subsubsection{Isomorphism between operational and non-operational K-theories}

In this section, we will need to add further assumptions on the group $G$. We will assume that $G$ is a split reductive group over the field $k$, with simply connected commutator subgroup. Let $\Gm^n \subseteq G$ be a maximal torus.

The following is one of the main results of this subsection.

\begin{Prop}\label{prop: iso over GLn}
    The map $\opK_{G}(X \to \mathsf{pt}) \to \Kgroup^{G}(X)$ taking $c$ to $c_{\mathrm{Id}_{\mathsf{pt}}}([\sO_{\mathsf{pt}}])$ is an isomorphism of groups.
\end{Prop}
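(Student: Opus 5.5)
We follow the strategy of \cite[Theorem 4.6]{AndersonPayne-OperationalKtheory} for tori and reduce to it via the maximal torus. Call the map in the statement $\Phi$. First we construct a candidate inverse $\Psi\colon \Kgroup^G(X)\to\opK_G(X\to\mathsf{pt})$. For $\alpha\in\Kgroup^G(X)$ and a $G$-scheme $g\colon Y'\to\mathsf{pt}$, we have $X'=X\times Y'$, and we set
\[
\Psi(\alpha)_g(\beta):=\alpha\boxtimes\beta\in\Kgroup^G(X\times Y').
\]
This exterior product is well defined on Grothendieck groups because products over a field are flat. We then check axioms (A1) and (A2). Compatibility with proper pushforward is the projection formula for exterior products. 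Compatibility with refined Gysin maps $h^{\sP_\bullet}$ along a morphism $Y''\to Y'$ follows from the flatness of $X\times-$, since $\mathcal{T}or$ commutes with the exterior product by $\alpha$. Finally $\Phi\circ\Psi=\Id$, because $\alpha\boxtimes[\sO_{\mathsf{pt}}]=\alpha$.

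The content of the proof is injectivity of $\Phi$. We must show that an operational class $c$ is determined by $c_{\Id}([\sO_{\mathsf{pt}}])$, that is, $c=\Psi(\Phi(c))$. Setting $d:=c-\Psi(\Phi(c))$, we have $d_{\Id}([\sO_{\mathsf{pt}}])=0$, and we want $d_g=0$ for every $g$.

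The first step is to reduce from $G$ to the maximal torus $T=\Gm^n$. Restriction of operational classes gives a map $\opK_G(X\to\mathsf{pt})\to\opK_T(X\to\mathsf{pt})$. We relate a $G$-scheme $Y'$ to the $T$-scheme structure via $G\times^T Y'\simeq G/T\times Y'$ (or via $Y'\times G/B$). By \eqref{eqn: iso from Gln to T} and \cite[Proposition 31]{Merkurjev-equivariantKtheory}, $\Kgroup^G(X')\to\Kgroup^T(X')$ is injective. So it suffices to show that the restricted class $d^T$ vanishes on $T$-equivariant inputs coming from $G$-schemes.

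To carry this out, we express the restriction map $\Kgroup^G(Y')\to\Kgroup^T(Y')\simeq\Kgroup^G(G/T\times Y')$ as flat pullback along the projection $G/T\times Y'\to Y'$. Since $d$ commutes with flat pullback by (A2), $d_{Y'}$ composed with restriction equals $d_{G/T\times Y'}$ composed with restriction. We then invoke the torus case \cite[Theorem 4.6]{AndersonPayne-OperationalKtheory}: a $T$-operational class killing $[\sO_{\mathsf{pt}}]$ is zero. This is proved there by equivariant Noetherian induction and the $T$-equivariant Chow envelopes/Sumihiro-type localization, reducing to points, which in turn reduces to flat pullback from $\mathsf{pt}$ and pushforward.

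The main obstacle is making the operational restriction $G\to T$ rigorous. An operational class for $G$ only gives operators on $G$-schemes, whereas the torus theorem needs arbitrary $T$-schemes $Z$. We plan to handle this by inducing: we send $Z$ to $G\times^T Z$, using $\Kgroup^T(Z)\simeq\Kgroup^G(G\times^TZ)$. This requires $G\times^T Z$ to exist as a scheme, which holds for quasi-projective $Z$ with linearized action; the general case follows by the envelope reduction in loc. cit. We then check that (A1) and (A2) transport under this equivalence. Once this is in place, injectivity of $\Phi$ follows from the torus case, and combined with $\Phi\circ\Psi=\Id$ this shows $\Phi$ is an isomorphism.
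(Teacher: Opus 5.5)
Your proposal is correct and takes essentially the same route as the paper. The inverse is $\sF\mapsto(\alpha\mapsto\sF\boxtimes\alpha)$, and injectivity is reduced to the Anderson--Payne torus case by restricting $G$-operational classes along $Z\mapsto G\times^T Z\simeq(Z\times G)/T$ (Lemma~\ref{lemma: T operational class}), using (A2) for the flat map $G/T\times Y'\to Y'$ (Lemma~\ref{lemma: compatibility T vs GL}) and Merkurjev's injectivity of $\Kgroup^G\to\Kgroup^T$; your extra care about $G\times^T Z$ being a scheme (which the paper settles by citing Edidin--Graham) and about checking (A1)--(A2) for $\Psi$ goes slightly beyond what the paper writes but does not change the argument.
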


When $G$ is a split torus, this is proved in \cite[Lemma 2.2]{AndersonPayne-OperationalKtheory} and uses the fact that, for a scheme $X$ with a $\Gm^n$-action, the $K$-theory groups $\Kgroup^{\Gm^n}(X)$ are additively generated by the classes $[\sO_V]$, where $V$ ranges over the $\Gm^n$-equivariant subvarieties of $X$.
We do not have such a result for $G$-equivariant schemes, but we will deduce Proposition~\ref{prop: iso over GLn} by reducing to the torus equivariant situation. We defer the proof of Proposition \ref{prop: iso over GLn} until after the following discussion.

Note that if $X$ is any scheme with a $\Gm^n$-action, there is a natural isomorphism
\[
[X/\Gm^n] \simeq \bigg[ \frac{(X \times G)/\Gm^n}{G} \bigg],
\]
and thus $\Kgroup^{\Gm^n}(X)$ is naturally identified with 
$
\Kgroup^{G}\big((X \times G)/\Gm^n\big).
$
Note that $(X \times G)/\G^m_n$ is scheme by \cite[Proposition 23]{EG}.
Here $\Gm^n$ acts anti-diagonally on $X \times G$ and $G$ acts on $X \times G$ by multiplication on $G$.

If the $\Gm^n$-action on $X$ is the restriction of a $G$-action, then the action map induces a natural smooth $G$-equivariant map
\[
(X \times G)/\Gm^n \longrightarrow X,
\]
which modulo $G$ can be naturally identified with $[X/\Gm^n] \to [X/G]$. In particular, the induced map
\begin{equation}\label{eqn: inclusion in TK}
\Kgroup^{G}(X) \longrightarrow 
\Kgroup^{G}\big((X \times G)/\Gm^n\big) \simeq \Kgroup^{\Gm^n}(X), \quad
\alpha \longmapsto \alpha^{\Gm^n}
\end{equation}
is injective by \cite[Proposition 31]{Merkurjev-equivariantKtheory}. This is the only point where the assumptions on $G$ in Proposition~\ref{prop: iso over GLn} are used.

Finally, given a $\Gm^n$-equivariant cartesian square \begin{equation*}
\begin{tikzcd}
X' \arrow[r, "f'"] \arrow[d, "g'"] & Y' \arrow[d, "g"] \\
X \arrow[r, "f"] & Y
\end{tikzcd}
\end{equation*}
where $X$ and $Y$ are further endowed with $G$-actions and $f$ is $G$-equivariant, one obtains an associated $G$-equivariant square diagram 
\[
\begin{tikzcd}
(X' \times G)/\Gm^n \arrow[r, "(f')^{G}"] \arrow[d, "(g')^{G}"] & (Y' \times G)/\Gm^n \arrow[d, "g^{G}"] \\
X \arrow[r, "f"] & Y
\end{tikzcd}
\]

The vertical map $g^G$ is induced by the composition
\[
\begin{tikzcd}
    Y' \times G \arrow[r,"f \times \mathrm{Id}"] & Y \times G \arrow[r] & Y    
\end{tikzcd}
\]
where the second map is the action map. The morphism $(g')^G$ is defined analogously.

\begin{Lemma}\label{lemma: T operational class}
Let $c \in \opK_{G}(f \colon X \to Y)$. For any $\Gm^n$-equivariant morphism
$
g \colon Y' \to Y,
$
set $X' = X \times_Y Y'$ and identify $\Kgroup^{\G_m^n}(Y')$ and $\Kgroup^{\G_m^n}(X')$ with $\Kgroup^{G}((Y' \times G)/\Gm^n)$ and $\Kgroup^{G}((X' \times G)/\Gm^n)$ respectively, as explained above. Then,
\[ 
c_g^{\Gm^n} \colon 
\Kgroup^{G}\big((Y' \times G)/\Gm^n\big) 
\longrightarrow 
\Kgroup^{G}\big((X' \times G)/\Gm^n\big), 
\quad
\alpha \longmapsto c_{g^{G}}(\alpha).
\]
defines a $\Gm^n$-equivariant operational class $ c^{\Gm^n} \in \opK_{\Gm^n}(f: X \to Y). $

Moreover, the assignment $c \mapsto c^{\Gm^n}$ is a group homomorphism
\[
\opK_{G}(X \to Y) \longrightarrow \opK_{\Gm^n}(X \to Y).
\]
\end{Lemma}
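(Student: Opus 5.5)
The plan is to verify directly that the operators $c^{\Gm^n}_g$ satisfy the two axioms (A1) and (A2) of \cite[Definition 4.1]{AndersonPayne-OperationalKtheory}, with $T=\Gm^n$. Each axiom will be transported through the construction $Y'\mapsto (Y'\times G)/\Gm^n$, where it becomes an instance of the same axiom for the $G$-equivariant class $c$.

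First I check that the construction is functorial and respects fibre products. A $\Gm^n$-equivariant morphism $h\colon Y''\to Y'$ induces a $G$-equivariant morphism $h^G\colon (Y''\times G)/\Gm^n\to (Y'\times G)/\Gm^n$. Modulo $G$, this is just $[Y''/\Gm^n]\to[Y'/\Gm^n]$. Since the functor $Z\mapsto (Z\times G)/\Gm^n$ is the equivalence between $\Gm^n$-spaces and $G$-spaces over $G/\Gm^n$ given by induction, it preserves the following:
\begin{itemize}
\item cartesian squares;
\item properness, flatness and regular embeddings (these properties can be checked smooth-locally, after pulling back along the $G$-torsor);
\item the identifications $\Kgroup^{\Gm^n}(Z)\simeq\Kgroup^G((Z\times G)/\Gm^n)$.
\end{itemize}
Under these identifications, proper pushforward and refined Gysin maps $h^{\sP_\bullet}$ correspond to the analogous operations for $h^G$ with the pulled-back complex. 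Both are defined sheaf-theoretically on the quotient stack $[Z/\Gm^n]$, so this correspondence is formal. In particular,
\[
(X'\times G)/\Gm^n\simeq X\times_Y (Y'\times G)/\Gm^n ,
\]
using the map $g^G$. This makes $c_{g^G}$ well-typed: it lands in $\Kgroup^G((X'\times G)/\Gm^n)$. I expect this to be the main point requiring care. One has to check that the square built from $g^G$ really is cartesian and compatible with the action map $Y\times G\to Y$. Concretely, $(x',h)\mapsto (h\cdot f'(x'))$ must be matched with $(h\cdot g'(x'),\ldots)$, using the $G$-equivariance of $f$.

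With that in place, (A1) goes as follows. Take a $\Gm^n$-equivariant proper map $h\colon Y''\to Y'$ over $Y$. Then $h^G$ is proper and $g^G\circ h^G=(g\circ h)^G$. Hence
\[
c^{\Gm^n}_{g\circ h}\circ h_* = c_{(gh)^G}\circ h^G_* = h'^G_*\circ c_{g^G},
\]
by (A1) for $c$. For (A2), given a $\Gm^n$-equivariant flat map or regular embedding $Z'\to Z$ with a cartesian diagram over $Y'$, apply the induction functor to the whole diagram. This yields a $G$-equivariant diagram of the same shape, and the $\Gm^n$-Gysin map becomes the $G$-Gysin map. Then (A2) for $c$ gives commutation with $c^{\Gm^n}$.

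Additivity of $c\mapsto c^{\Gm^n}$ is immediate, since $(c+d)_{g^G}=c_{g^G}+d_{g^G}$.
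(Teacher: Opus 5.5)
Your proposal is correct and follows the paper's approach. The paper's proof is the one-line remark that (A1) and (A2) for $c^{\Gm^n}$ follow immediately from those for $c$ via the construction $Z\mapsto (Z\times G)/\Gm^n$. You spell out the compatibilities this needs: the cartesian square $(X'\times G)/\Gm^n\simeq X\times_Y (Y'\times G)/\Gm^n$, preservation of properness, flatness and regular embeddings, and compatibility of the identifications with pushforwards and Gysin maps.

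One small slip: in your (A1) display the subscripts are swapped. It should read $c_{g^G}\circ h^G_*=(h')^{G}_*\circ c_{(g\circ h)^G}$, i.e. $c^{\Gm^n}_{g}\circ h_*=h'_*\circ c^{\Gm^n}_{g\circ h}$, because $h_*$ lands in $\Kgroup^{\Gm^n}(Y')$.
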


\begin{proof}
    The commutativity with proper pushforwards and Gysin pullbacks for $c^{\Gm^n}$ follows immediately from that for $c$.
\end{proof}

\begin{Lemma}\label{lemma: compatibility T vs GL}
    With the notation of Lemma \ref{lemma: T operational class}, suppose further that $g$ is a $G$-equivariant map. For a class $\alpha \in \Kgroup^{G}(Y')$, one has
    $$
    c_g^{\Gm^n}(\alpha^{\Gm^n})= \big( c_g(\alpha) \big)^{\Gm^n}
    $$
    where $(-)^{\Gm^n}$ denotes the associated classes in $\Kgroup^{\Gm^n}$ under the inclusion in Equation~\eqref{eqn: inclusion in TK}.
\end{Lemma}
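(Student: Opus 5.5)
The argument is a diagram chase: the map $(-)^{\Gm^n}$ of Equation~\eqref{eqn: inclusion in TK} is itself a flat pullback, so I will realize both sides of the identity by applying $c$ to two $G$-equivariant base changes of $Y$ and compare them using compatibility of $c$ with flat pullback, i.e.\ axiom (A2).

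First I make the identification explicit. Put $\widetilde{Y}':=(Y'\times G)/\Gm^n$, and similarly $\widetilde{X}'$. Since $g$ is $G$-equivariant, the action map gives a smooth, in particular flat, $G$-equivariant morphism $a_{Y'}\colon \widetilde{Y}'\to Y'$, $[(y,h)]\mapsto h\cdot y$. Modulo $G$ this is $[Y'/\Gm^n]\to[Y'/G]$, so $\alpha^{\Gm^n}=a_{Y'}^*\alpha$. Likewise $a_{X'}\colon\widetilde{X}'\to X'$ gives $(c_g(\alpha))^{\Gm^n}=a_{X'}^*c_g(\alpha)$.

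Next I identify $g^G$ with $g\circ a_{Y'}$. By construction $g^G$ is induced by $(y,h)\mapsto h\cdot g(y)$, and $G$-equivariance of $g$ gives $h\cdot g(y)=g(h\cdot y)$. Hence $g^G=g\circ a_{Y'}$ as $G$-equivariant maps. I then check that
\[
\widetilde{X}' \simeq X\times_Y \widetilde{Y}' \simeq X'\times_{Y'}\widetilde{Y}',
\]
with $(g')^G=g'\circ a_{X'}$ and $a_{X'}$ the base change of $a_{Y'}$ along $f'$. This follows from $X'=X\times_Y Y'$ and the fact that forming $(-\times G)/\Gm^n$ commutes with this fiber product. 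I will verify it on the free quotients before dividing by $\Gm^n$: $(X'\times G)\simeq X'\times_{Y'}(Y'\times G)$, compatibly with the anti-diagonal $\Gm^n$-actions.

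Finally I apply axiom (A2) for the $G$-equivariant operational class $c$ to the flat map $a_{Y'}$, viewed as a refined Gysin pullback in the Cartesian square over $Y'$. This gives
\[
c_{g\circ a_{Y'}}(a_{Y'}^*\alpha)=a_{X'}^*\,c_g(\alpha).
\]
The left side is $c_{g^G}(\alpha^{\Gm^n})=c^{\Gm^n}_g(\alpha^{\Gm^n})$, and the right side is $(c_g(\alpha))^{\Gm^n}$, which is the claim.

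The main obstacle is bookkeeping rather than substance. I must confirm that the identifications $\Kgroup^{\Gm^n}(Y')\simeq\Kgroup^G(\widetilde{Y}')$ used in Lemma~\ref{lemma: T operational class} really send the restriction map to $a_{Y'}^*$, and that $a_{X'}$ is exactly the base change of $a_{Y'}$. Both reduce to the check on $Y'\times G$ before quotienting described above.
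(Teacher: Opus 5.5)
Your proposal is correct and follows the paper's proof. The paper likewise writes $g^{G}=g\circ h$ with $h\colon (Y'\times G)/\Gm^n\to Y'$ the flat action map, applies (A2) to get $c_{g\circ h}(h^{!}\alpha)=h^{!}c_g(\alpha)$, and then identifies the refined pullback $h^{!}$ on $\Kgroup^G(X')$ with flat pullback along the base change $h'$; your explicit checks that $g^G=g\circ a_{Y'}$ and of the fiber-product identification fill in details the paper leaves implicit.
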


\begin{proof}
Consider the $G$-equivariant cartesian diagram
    \[
        \begin{tikzcd}[row sep=large, column sep=large]
        ((X \times_Y Y') \times G)/\Gm^n \arrow[r] \arrow[d, "h'"'] & (Y' \times G)/\Gm^n \arrow[d, "h"] \\
        X\times_Y Y' \arrow[r, "f'"] \arrow[d, "g'"'] & Y' \arrow[d, "g"] \\
        X \arrow[r, "f"] & Y
        \end{tikzcd}
    \]
We have
$$
c_g^{\Gm^n}(\alpha^{\Gm^n})=c_{g \circ h}( h^{!}(\alpha))= h^{!}(c_g(\alpha))= (h')^{!}(c_g(\alpha))=\big( c_g(\alpha) \big)^{\Gm^n}.
$$
The first and last equalities are by the definition of $(-)^{\Gm^n}$, while the second follows from (A2) in Definition~\ref{def: operational K-theory} (note that $h$ is flat) and the third holds because $h$ is a flat morphism.
\end{proof}

\begin{proof}[Proof of Proposition \ref{prop: iso over GLn}]

We can explicitly describe the inverse map as follows. Define

$$
\Kgroup^{G}(X) \to \opK_{G}(X \to \mathsf{pt}), \qquad \mathcal{F} \mapsto c^{\mathcal{F}}
$$
where for all $G$-equivariant schemes $Y$ and classes $\alpha \in \Kgroup^{G}(Y)$ we set  
$$
c^{\sF}(\alpha)=\sF \boxtimes \alpha \in \Kgroup^{G}(X \times Y).
$$

Clearly, $c^{\sF}([\sO_{\mathsf{pt}}]) = \sF$, so it remains to check that the other composition is the identity, namely that
\[
c_{\mathrm{Id}_{\mathsf{pt}}}([\sO_{\mathsf{pt}}]) \boxtimes \alpha
= c_g(\alpha) \in \Kgroup^{G}(X \times Y),
\]
where $g \colon Y \to \mathsf{pt}$ is the structural morphism and $\alpha \in \Kgroup^{G}(Y)$. Since the map~\eqref{eqn: inclusion in TK} is injective, it suffices to verify the equality after passing to $\Kgroup^{\Gm^n}(X \times Y)$. By Lemma~\ref{lemma: compatibility T vs GL}, the statement reduces to the $\Gm^n$-equivariant setting, where it follows from \cite[Lemma~2.2]{AndersonPayne-OperationalKtheory}.
\end{proof}

\begin{Corollary}\label{cor: opK=K}
    Suppose that $X$ is smooth and that the commutator subgroup of G is simply connected. Then the canonical map
    $$
    \opK_{G}(X)\longrightarrow \Kgroup^{G}(X)
    $$
    taking $c$ to $c_{\mathrm{Id}_X}([\sO_X])$ is an isomorphism of rings.
\end{Corollary}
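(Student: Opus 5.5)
The strategy is to reduce to Proposition~\ref{prop: iso over GLn} using Poincaré duality for smooth $X$, in the same way as \cite[Corollary 4.5]{AndersonPayne-OperationalKtheory}. The structure map $p\colon X\to \mathsf{pt}$ is smooth and $G$-equivariant. So Proposition~\ref{prop: composition with smooth is iso in Ktheory}, applied with $Y=X$, $Z=\mathsf{pt}$ and $g=p$, shows that
\[
\cdot[p]\colon \opK_{G}(X)=\opK_G(X\xrightarrow{\mathrm{Id}}X)\longrightarrow \opK_G(X\to\mathsf{pt})
\]
is an isomorphism of groups. Composing it with the isomorphism $\opK_G(X\to\mathsf{pt})\xrightarrow{\sim}\Kgroup^G(X)$ of Proposition~\ref{prop: iso over GLn} gives an isomorphism $\opK_G(X)\to\Kgroup^G(X)$.

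Next I check that this composite is the canonical map $c\mapsto c_{\mathrm{Id}_X}([\sO_X])$. By definition of the product of operational classes, $(c\cdot[p])_{\mathrm{Id}_{\mathsf{pt}}}([\sO_{\mathsf{pt}}])=c_{\mathrm{Id}_X}(p^*[\sO_{\mathsf{pt}}])=c_{\mathrm{Id}_X}([\sO_X])$, so the composite sends $c$ to $c_{\mathrm{Id}_X}([\sO_X])$, as required. This step only uses the formalism (P1)--(P7) transported to $G$; the ordering convention for the product should be kept consistent with \cite{AndersonPayne-OperationalKtheory}.

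Finally, multiplicativity. On the source, the product is composition of operators. Since $X$ is smooth, $\Kgroup^G(X)\simeq\Kring_G(X)$ (\S\ref{sec: comparison isomorphisms}), so the target is a ring under the derived tensor product. There is a ring map $\Kring_G(X)\to\opK_G(X)$ sending $\beta$ to the operator $\alpha\mapsto g^*\beta\cdot\alpha$ for each $g\colon X'\to X$. This operator commutes with proper pushforward by the projection formula~\eqref{eqn: Projection Formula}, and it commutes with Gysin maps because $\mathcal{T}\!or$ is compatible with tensoring by perfect complexes. Composing this map with the canonical map sends $\beta$ to $\beta\cdot[\sO_X]=\beta$, so it is a right inverse of the canonical map. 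Since the canonical map is a bijection, this ring map is its inverse, and hence the canonical map is a ring isomorphism.

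I expect the main obstacle to be checking that the properties of operational classes from \cite{AndersonPayne-OperationalKtheory}, in particular the product structure and the compatibility with $[p]$, carry over verbatim from tori to $G$. Given Proposition~\ref{prop: properties pullbacks}, I expect this to be routine. The group-theoretic hypotheses (split reductive, simply connected commutator) enter only through Proposition~\ref{prop: iso over GLn}.
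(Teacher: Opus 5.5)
Your proposal is correct and follows essentially the same route as the paper, whose proof simply combines Proposition~\ref{prop: composition with smooth is iso in Ktheory} (applied to the smooth structure map $X\to\mathsf{pt}$) with Proposition~\ref{prop: iso over GLn}. Your explicit checks, that the composite is $c\mapsto c_{\mathrm{Id}_X}([\sO_X])$ and that multiplicativity follows because the multiplication-operator map $\Kring_G(X)\to\opK_G(X)$ is a right inverse of this bijection, fill in details the paper leaves implicit.
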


\begin{proof}
    Combine Propositions \ref{prop: composition with smooth is iso in Ktheory} and \ref{prop: iso over GLn}.
\end{proof}

\section{K-Theory of a Weighted Blowup}\label{sec:K-theory-weighted-blowups}

\subsection{Gysin for Weighted Blowups}\label{sub: Gysin for Weighted Blowups}

As the title anticipates, in this subsection we describe how to compute the Gysin morphism for weighted blowups.

Let $\cX \subseteq \cY$ be a weighted regular closed embedding, and let $f: \widetilde{\cY} \to \cY$ denote the associated weighted blowup. Denote by $\tcX$ the reduced exceptional divisor over $\cX$, and form the commutative blowup diagram
\[
\begin{tikzcd}
    \tcX\arrow[r,"j"]\arrow[d,"g"] & \tcY\arrow[d,"f"]\\
    \cX\arrow[r,"i"] & \cY
\end{tikzcd}
\]
We wish to define an element $f^!$ that serves as the Gysin pullback for the map $f$. Ideally, we would want this element to live in the operational K-theory group:
\[
    f^! \in \opK(\widetilde{\cY} \to \cY)
\]
However, because we have not quite defined an operational K-theory $\opK(\widetilde{\cY} \to \cY)$, we instead consider
\[
\hat{f}^! \in \opK_{\GL_n \times \Gm}\bigl(D_{X_\bullet}Y \smallsetminus (X \times \bA^1) \to Y\bigr).
\]
Here, the $\GL_n \times \Gm$-equivariant lci map $\hat{f}$ is the composition 
$$
D_{X_\bullet}Y \smallsetminus (X \times \bA^1) \longrightarrow \widetilde{Y} \longrightarrow Y,
$$
$\Gm$ acts trivially on $Y$, and the $\GL_n \times \Gm$-action on 
$D_{X_\bullet}Y \smallsetminus (X \times \bA^1)$ is that given by the identification
\begin{equation}\label{eqn: GmXGLn-action-def-normal-cone}
D_{X_\bullet}Y \smallsetminus (X \times \bA^1) 
\simeq 
\bigl( D_{\cX_\bullet}\cY \smallsetminus (\cX \times \bA^1) \bigr) 
\times_{\widetilde{\cY}} \widetilde{Y},
\end{equation}
together with the $\Gm$-action on 
$D_{\cX_\bullet}\cY \smallsetminus (\cX \times \bA^1)$ and the $\GL_n$-action on $\widetilde{Y}$.
Given the commutative diagram
\begin{equation}\label{eqn: diagram cone}
\begin{tikzcd}[row sep=large, column sep=large]
C_{I_{\bullet}} \smallsetminus \operatorname{Im}(\sigma_{I_{\bullet}})
\arrow[r] \arrow[d] 
& D_{X_\bullet}Y \smallsetminus (X \times \bA^1) \arrow[d] \\
\widetilde{X}= \Proj_{X}(\sC_{I_{\bullet}})
\arrow[r, "i'"] \arrow[d, "f'"'] 
& \widetilde{Y} \simeq \left[\left( D_{X_\bullet}Y \smallsetminus (X \times \bA^1)\right)/\Gm\right] 
\arrow[d, "f"] \\
X \arrow[r, "i"] & Y
\end{tikzcd}
\end{equation}
where we endow both $X$ and $Y$ with the trivial $\Gm$-action, we obtain a Gysin pullback, which we denote by $f^!$,
\[
f^! : \Kgroup(\cX) \longrightarrow \Kgroup(\widetilde{\cX}),
\]
defined as the composition
\[
\begin{aligned}
\Kgroup(\cX) 
&\xrightarrow{\mathrm{Id} \otimes 1} 
\Kgroup(\cX) \otimes R(\Gm)
\simeq \Kgroup(\cX \times B\Gm)
\simeq \Kgroup^{\GL_n}(X \times B\Gm) \\
&\simeq \Kgroup^{\GL_n \times \Gm}(X)
\xlongrightarrow{\hat{f}^!} 
\Kgroup^{\GL_n \times \Gm}\bigl(C_{I_{\bullet}} \smallsetminus \operatorname{Im}(\sigma_{I_{\bullet}})\bigr)
\simeq \Kgroup(\widetilde{\cX}).
\end{aligned}
\]

Note that the diagram \eqref{eqn: diagram cone} is not cartesian, but $\Kgroup^{\GL_n \times \Gm}$ is insensitive to passing to the underlying reduced subscheme by the discussion in \S\ref{sec:excision}.

\begin{Prop}\label{prop:existence of gamma}
    There exists a class $\gamma \in \Kgroup(\widetilde{\cX})$ such that $f^!(\alpha)= g^*(\alpha) \cdot \gamma \in \Kgroup(\widetilde{\cX})$ for all $\alpha \in \Kgroup(\cX)$. Moreover, we have that $\gamma=f^{!}(1)$.
\end{Prop}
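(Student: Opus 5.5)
We will show that $f^!$ is linear over $\Kring(\cX)$ acting on the target through $g^*$; then we set $\gamma:=f^!(1)$. Since $\cX$ is smooth, $\Kgroup(\cX)\simeq\Kring(\cX)$ is a ring with unit $1=[\sO_{\cX}]$. Linearity then gives $f^!(\alpha)=f^!(\alpha\cdot 1)=g^*(\alpha)\cdot f^!(1)$, which is the claim.

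\textbf{Step 1: linearity of the first maps.} The first maps in the composition defining $f^!$ are
\[
\alpha\mapsto\alpha\otimes 1\in\Kgroup(\cX)\otimes R(\Gm)\simeq\Kgroup^{\GL_n\times\Gm}(X).
\]
By Lemma~\ref{lem: kunneth formula BGm}, these are isomorphisms of $\Kring_{\GL_n}(X)\otimes R(\Gm)$-modules, and the map $\mathrm{Id}\otimes 1$ is $\Kring_{\GL_n}(X)$-linear. So $\alpha\otimes 1=\pi^*\alpha\cdot 1$, where $\pi$ denotes the (flat) projection $[X/\GL_n\times\Gm]\to[X/\GL_n]$.

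\textbf{Step 2: linearity of $\hat f^!$.} We use that $\hat f^!$ is an operational class in $\opK_{\GL_n\times\Gm}(D_{X_\bullet}Y\smallsetminus(X\times\bA^1)\to Y)$, namely the canonical orientation of the lci map $\hat f$. Operational classes commute with Gysin pullbacks, property (A2). Following \cite[\S4]{AndersonPayne-OperationalKtheory}, this yields compatibility with the module structure: for the refined map $\Kgroup^{\GL_n\times\Gm}(X)\to\Kgroup^{\GL_n\times\Gm}(C_{I_\bullet}\smallsetminus\operatorname{Im}\sigma)$ attached to $X\to Y$, we have
\[
\hat f^!(\beta\cdot\xi)=h^*\beta\cdot\hat f^!(\xi), \qquad \beta\in\Kring_{\GL_n\times\Gm}(X),
\]
where $h$ is the structure map $C_{I_\bullet}\smallsetminus\operatorname{Im}\sigma\to X$.

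A more concrete route avoids the operational formalism. By Lemma~\ref{lem:lci-factorization} and Proposition~\ref{prop: properties pullbacks}\eqref{point 3: prop: properties pullbacks}, we factor $\hat f$ as a regular embedding followed by a smooth map. Then $\hat f^!$ is computed by Tor-sheaves $\mathcal{T}or_i^Y(\sO_{\widetilde D},\sF)$, with $\widetilde D:=D_{X_\bullet}Y\smallsetminus(X\times\bA^1)$. For $\beta$ represented by a locally free sheaf $V$ on $X$, the sheaf $\sF\otimes V$ has resolution (locally, after resolving $\sF$ over $Y$ in the regular-embedding step) equal to the resolution of $\sF$ tensored by $V$. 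Hence $\mathcal{T}or_i(\sO_{\widetilde D},\sF\otimes V)\simeq\mathcal{T}or_i(\sO_{\widetilde D},\sF)\otimes h^*V$ equivariantly, using the projection-formula-type identity for Tor. Here we use the resolution property, so that $\Kring_{\GL_n\times\Gm}(X)$ is generated by classes of equivariant vector bundles.

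\textbf{Step 3: conclusion.} Combining the two steps, $f^!(\alpha)=h^*\pi^*\alpha\cdot f^!(1)$. Under the identification $\Kgroup^{\GL_n\times\Gm}(C_{I_\bullet}\smallsetminus\operatorname{Im}\sigma)\simeq\Kgroup(\tcX)$, the map $h$ descends to the composite $\tcX\to\cX\times B\Gm\to\cX$, which is $g$. So $h^*\pi^*\alpha=g^*\alpha$. Since $\tcX$ is smooth, the product is the ring product in $\Kring(\tcX)$, and taking $\gamma=f^!(1)$ proves the claim.

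\textbf{Main obstacle.} The delicate point is Step 2. The diagram \eqref{eqn: diagram cone} is not cartesian, and the bottom-left map is not the cartesian pullback. The refined pullback lands a priori on $X\times_Y\widetilde D$, and only afterwards is identified with the class on its reduced substack $C_{I_\bullet}\smallsetminus\operatorname{Im}\sigma$ via \S\ref{sec:excision}. We must check that multiplication by $h^*\beta$ is compatible with this identification. That holds because tensoring with a pulled-back vector bundle commutes with the dévissage filtration by powers of the nilradical, so the isomorphism $\Kgroup(Z_{\mathrm{red}})\simeq\Kgroup(Z)$ is $\Kring(Z)$-linear.
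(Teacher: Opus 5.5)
Your proof is correct, but it takes a different route from the paper.

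\textbf{The paper's route.} The paper never verifies linearity by hand. It regards $f^!$ as an element of $\opK_{\GL_n\times\Gm}(C_{I_\bullet}\smallsetminus\operatorname{Im}(\sigma_{I_\bullet})\to X)$. It then applies Proposition~\ref{prop: composition with smooth is iso in Ktheory} to the smooth projection $C_{I_\bullet}\smallsetminus\operatorname{Im}(\sigma_{I_\bullet})\to X$ to write $f^!=c\cdot[g]$ with $c\in\opK_{\GL_n\times\Gm}(C_{I_\bullet}\smallsetminus\operatorname{Im}(\sigma_{I_\bullet}))$. Finally, Corollary~\ref{cor: opK=K} identifies $c$ with multiplication by a class $\gamma$, which is necessarily $f^!(1)$.

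\textbf{Your route.} You prove directly the $K$-theoretic analogue of ``refined Gysin maps commute with Chern classes''. The $\mathcal{T}or$-sheaf formula for $\hat f^!$ is $\sO_X$-linear, so it commutes with tensoring by equivariant vector bundles on $X$. The resolution property, Lemma~\ref{lem: kunneth formula BGm}, and the projection formula for $Z_{\mathrm{red}}\hookrightarrow Z$ then finish the argument.

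\textbf{Comparison.} Your argument is elementary and self-contained. It bypasses all of \S\ref{sub:operational-K-theory}, including Merkurjev's injectivity and the reduction to the torus case via Anderson--Payne. It also deals explicitly with the non-cartesian square~\eqref{eqn: diagram cone}, which the paper passes over quickly. The paper's argument is two lines once the operational machinery is in place, and it gives a stronger identity of operational classes: $f^!$ acts as $\gamma\cdot(g')^*$ after any base change $X'\to X$. Your step $f^!(\alpha)=f^!(\alpha\cdot 1)$ uses $\Kgroup(\cX)=\Kring(\cX)$, so it only covers the smooth base, which is all the statement requires.

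\textbf{Minor points.}
\begin{itemize}
\item The lci factorization in your Step~2 is superfluous, since $\hat f^!$ is defined directly by $\mathcal{T}or$-sheaves.
\item The isomorphism $\mathcal{T}or_i^Y(\sO_{\widetilde D},\sF\otimes V)\simeq\mathcal{T}or_i^Y(\sO_{\widetilde D},\sF)\otimes h^*V$ is best justified by the $\sO_X$-linearity of $\sF\mapsto\mathcal{T}or_i^Y(\sO_{\widetilde D},\sF)$. It is an isomorphism locally where $V$ is free, glued by naturality. An argument via resolutions over $Y$ does not work directly, because $V$ lives on $X$ and need not extend to $Y$.
\item Your first, operational sketch of Step~2 is not needed; the concrete $\mathcal{T}or$ argument carries the proof.
\end{itemize}
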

\begin{proof}
    By Proposition \ref{prop: composition with smooth is iso in Ktheory}, we have an isomorphism 
    $$
    \cdot [g] \colon \opK_{\GL_n \times \Gm}(C_{I_{\bullet}} \smallsetminus \operatorname{Im}(\sigma_{I_{\bullet}})) \longrightarrow \opK_{\GL_n \times \Gm}(C_{I_{\bullet}} \smallsetminus \operatorname{Im}(\sigma_{I_{\bullet}}) \to X).
    $$
    In particular, there exists
    $$
    \gamma \in \Kring(\widetilde{\cX}) \simeq \Kgroup(\widetilde{\cX}) \simeq \Kgroup^{\GL_n \times \Gm}( C_{I_{\bullet}} \smallsetminus \operatorname{Im}(\sigma_{I_{\bullet}})) \simeq \opK_{\GL_n \times \Gm}(C_{I_{\bullet}} \smallsetminus \operatorname{Im}(\sigma_{I_{\bullet}}))
    $$
    such that $g^*(-) \cdot \gamma= f^! \in \opK_{\GL_n \times \Gm}(C_{I_{\bullet}} \smallsetminus \operatorname{Im}(\sigma_{I_{\bullet}}) \to X)$. The last isomorphism is Corollary \ref{cor: opK=K} and $i : X \hookrightarrow Y$ denotes the inclusion. This concludes.
\end{proof}

Let $E$ be the weighted conormal cone of $\cX \subseteq \cY$. We recall that in Theorem~\ref{prop: twisted weighted projection formula}, we defined
$$ \lambda^{\Gm}_{-1}(E^{\vee}) \in \Kring(\cX \times B\Gm)$$
(equal to the Euler class of the dual of the normal bundle). Since $\Kring(\cX \times B\Gm)\simeq \Kring(\cX)[t,t^{-1}]$ where $t$ is the {class of the} standard geometric $\Gm$-representation, we have that $ \lambda^{\Gm}_{-1}(E^{\vee}) = p(t^{-1})$ for some polynomial $p$ of degree $w$ (where $w$ here is the sum of the $\Gm$-weights)  with coefficients in $\Kring(\cX)$. The polynomial will be useful to describe the class $\gamma$ of Proposition~\ref{prop: composition with smooth is iso in Ktheory}.

Let us start by computing the element $\gamma$ in a special case.

\begin{Rmk}
    The construction of $Bl_0^{\bf{w}} \bA^d$ in \S\ref{sub:blowup-0-An} is $\G_m^d$-equivariant, therefore we obtain a natural action of $\Gm^d$ on $Bl_{0}^{w}\A^d$ and an isomorphism 
    $$ [Bl_{0}^w\A^d/\Gm^d] \simeq Bl_{B\G_m^d}^w[\A^d/\G_m^d]$$
    where $B\G_m^d\subset [\A^d/\G_m^d]$ is the zero section.
\end{Rmk}

\begin{Lemma}\label{lemma: T-equivariant case}
    Let
    \[
    \begin{tikzcd}
        F:{[Bl_0^{\bf{w}} \bA^d/\G_m^d]} \arrow[r] & {[\bA^d/\G_m^d]}
    \end{tikzcd}
    \]
    be the blowup of $[\bA^d/\G_m^d]$ at $[0/\G_m^d]$ with weights ${\bf{w}}=(w_1,\dots,w_d)$. Then 
    $$ F^{!}(1) = \frac{ p(t^{-1})-p(1)}{ t-1} \in \Kring([\cP(w_1,\dots,w_d)/\G_m^d])$$
     where $t=[\sO(1)]$.
\end{Lemma}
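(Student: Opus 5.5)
We compute $F^!(1)$ by exploiting the fact that $j_*$ is injective after restricting to the exceptional divisor, and by comparing two ways of computing $j^*j_*$. Write $\cP=[\cP(w_1,\dots,w_d)/\Gm^d]$, $j:\cP\hookrightarrow \widetilde{\cA}:=[Bl_0^{\bf w}\bA^d/\Gm^d]$ for the exceptional divisor, $g:\cP\to B\Gm^d$ and $i:B\Gm^d\to[\bA^d/\Gm^d]$ for the zero section. Everything is smooth, and by the Corollary computing $\Kring(Bl_0^{\bf w}\A^d)$ (run $\Gm^d$-equivariantly via Theorem~\ref{prop: twisted weighted projection formula}) we have
\[
\Kring(\cP)\simeq R(\Gm^d)[t^{\pm1}]/(p(t^{-1})),\qquad p(t^{-1})=\prod_i(1-t_i^{-1}t^{-w_i}).
\]
The class of $\sO(-\cP)$ on $\widetilde{\cA}$ is $t$, so $j^*[\sO_{\widetilde{\cA}}(-\cP)]=t$, and hence $j^*j_*\beta=(1-t)\beta$ for every $\beta\in\Kring(\cP)$ (self-intersection formula, Proposition~\ref{prop: self intersection formula}, with normal bundle $\sO(\cP)|_\cP$).

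The key identity is a projection/key formula of the shape $j_*F^!(\alpha)=F^*i_*(\alpha)$, which is the $K$-theoretic analogue of Fulton's key formula. To prove it, compute $F^!$ via the refined Gysin map attached to the regular embedding $i$ in the cartesian-up-to-nilpotents square with $F$. That is, $F^!(\alpha)$ is $i^!$ applied to $[\sO_{\widetilde{\cA}}]\otimes\alpha$ in $\Kgroup(F^{-1}(0))=\Kgroup(\cP)$ (using \S\ref{sec:excision} to ignore the nonreduced structure). Pushforward compatibility (Proposition~\ref{prop: properties pullbacks}\eqref{point 1: prop: properties pullbacks}) together with Tor-independence ($F$ is lci and $i$ regular, so the Koszul complex of $x_1,\dots,x_d$ pulled back along $F$ computes $i^!$) then gives $j_*F^!(1)=F^*i_*1=F^*\lambda_{-1}(\text{Koszul})=[\sO_{F^{-1}(0)}]$. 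Applying $j^*$, we get
\[
(1-t)F^!(1)=j^*F^*i_*1=g^*i^*i_*1=g^*\lambda_{-1}(N^\vee)=g^*\prod_i(1-t_i^{-1}).
\]
In the variable $t$ this last expression is $p(1)$.

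Since $p(t^{-1})=0$ in $\Kring(\cP)$, we have
\[
(1-t)\cdot\frac{p(t^{-1})-p(1)}{t-1}=p(1)-p(t^{-1})=p(1)
\]
there. So both $F^!(1)$ and $\gamma:=\frac{p(t^{-1})-p(1)}{t-1}$ satisfy $(1-t)x=p(1)$, and the remaining task is to show that $1-t$ is a non-zero-divisor on $\Kring(\cP)$, i.e. that it is coprime to $p(t^{-1})$. Over $R(\Gm^d)=\Z[t_i^{\pm1}]$, after clearing powers of $t$, this reduces to showing that $t-1$ does not divide (nor share a factor with) $\prod_i(t^{w_i}-t_i^{-1})$. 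This holds because each factor $t^{w_i}-t_i^{-1}$ specialises at $t=1$ to $1-t_i^{-1}\neq0$, and the ring $\Z[t_i^{\pm1}][t^{\pm1}]/(p)$ is free over $\Z[t_i^{\pm1}]$ (since $p$ is monic up to a unit), so multiplication by $1-t$ is injective iff its determinant, the resultant $\pm\prod_i(1-t_i^{-1})$, is a non-zero-divisor in the domain $\Z[t_i^{\pm1}]$. This yields $F^!(1)=\gamma$.

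The main obstacle is making rigorous the identity $j_*F^!(1)=F^*i_*1$ in the weighted setting, where the square is not cartesian and $F^{-1}(0)$ is nonreduced. We plan to handle this concretely: $F^*(\text{Koszul}(x_1,\dots,x_d))$ on $\widetilde{\cA}$ is the Koszul complex of $s^{w_i}x_i'$. Since $x'_1,\dots,x'_d$ have no common zero on the blowup, this complex resolves $\sO_{F^{-1}(0)}$, which is the Tor-independence needed, and its class pushes forward consistently with $\hat F^!$ as defined via $D_{X_\bullet}Y$.
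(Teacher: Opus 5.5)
Your route is the paper's: the identity $j_*F^!(1)=F^*i_*1$, then $j^*$ and the self-intersection formula. Using $p(t^{-1})=0$, this gives $(1-t)F^!(1)=p(1)=(1-t)\gamma$ in $\Kring([\cP(w_1,\dots,w_d)/\Gm^d])$, and you finish by cancelling $1-t$.

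The only real difference is the cancellation step. The paper works in the UFD $A=\Z[t_1^{\pm1},\dots,t_d^{\pm1},t^{\pm1}]$. There the image of $p(t^{-1})$ in $A/(1-t)$ is $p(1)\neq0$, so $p(t^{-1})$ and $1-t$ share no prime factor, and $1-t$ is a non-zero-divisor modulo $p(t^{-1})$. You instead use that $A/(p(t^{-1}))$ is free of rank $w=\sum_i w_i$ over $\Z[t_1^{\pm1},\dots,t_d^{\pm1}]$, since $t^w p(t^{-1})$ is monic with unit constant term. Multiplication by $1-t$ then has determinant $\prod_i(1-t_i^{-1})\neq 0$. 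Both arguments are correct. Yours does not need unique factorization and would work over any coefficient ring in which $p(1)$ is a non-zero-divisor (by McCoy's theorem).

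One claim in your proposal is false, although nothing in your chain of equalities depends on it: there is no Tor-independence here. For $d\ge 2$ the elements $s^{w_1}x_1',\dots,s^{w_d}x_d'$ generate an ideal of height $1$, since set-theoretically they cut out the exceptional divisor. Their Koszul complex is therefore not acyclic, and $F^*i_*1\neq[\sO_{F^{-1}(0)}]$ in general. For example, take $d=2$, $\mathbf{w}=(1,1)$ and forget the torus. Then $F^{-1}(0)$ is the reduced $\bP^1$, so $[\sO_{F^{-1}(0)}]=1$, yet $F^!(1)=1-t^{-1}$. This is $\lambda_{-1}$ of the dual of the excess bundle $\sO_{\bP^1}(1)$. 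The non-vanishing higher $\mathcal{T}or$'s are exactly what make $F^!(1)$ nontrivial.

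The identity $j_*F^!(1)=F^*i_*1$ needs no such input. By definition, $F^!(1)$ is the alternating sum $\sum_k(-1)^k[\mathcal{T}or_k^{\bA^d}(\sO_{Bl_0^{\mathbf{w}}\bA^d},\sO_0)]$ regarded on $F^{-1}(0)$. Likewise, $F^*i_*1$ is the class on the blowup of the same pulled-back Koszul complex. Formally this is Proposition~\ref{prop: properties pullbacks}\eqref{point 1: prop: properties pullbacks}, extended to the lci map $F$ via Lemma~\ref{lem:lci-factorization} and part~\eqref{point 3: prop: properties pullbacks}. Combine it with \S\ref{sec:excision} to replace $F^{-1}(0)$ by its reduction; this is all the paper invokes. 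Drop the Tor-independence sentence and the equality with $[\sO_{F^{-1}(0)}]$, and your proof is complete.
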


\begin{proof}
Applying \S\ref{sec: representation rings} and Theorem~\ref{prop: twisted weighted projection formula}, we have the following isomorphisms
$$\Kring([\bA^d/\Gm^d])\simeq \Kring(B\Gm^d) \simeq \bZ[t_1^{\pm 1}, \dots t_d^{\pm 1}]; \quad \Kring([\cP(w_1, \dots w_d)/\Gm^d]) \simeq \frac{\bZ[t_1^{\pm 1}, \dots t_d^{\pm 1},t^{\pm 1}]}{p(t^{-1})}$$ where  $p(t^{-1})=\lambda_{-1}^{\Gm}([\bA^d/\Gm^d]^\vee)=\prod_{i=1}^d(1-t_i^{-1}t^{-w_i})$.

Let us set the notation:
$$
\begin{tikzcd}
\left[\cP(w_1,\dots,w_d)/\Gm^d\right] \arrow[r, "J"] \arrow[d, "G"] &  \left[Bl_0^{\bf{w}} \bA^d/\Gm^d\right] \arrow[d, "F"] \\
\left[0/\Gm^d\right] \arrow[r, "I"] & \left[\bA^d/\Gm^d\right]
\end{tikzcd}
$$ 
By Proposition~\ref{prop: properties pullbacks} we have $J_*F^!(1)=F^*I_*(1)$ and by applying $J^*$ to both sides and using the self-intersection formula in Theorem~\ref{prop: self intersection formula}, we obtain the equality
$$(1-t)\cdot F^!(1)=\lambda_{-1}(\sO_{\widetilde X}(-1)^\vee)\cdot F^!(1)=G^*(1) \cdot \lambda_{-1}(G^*\sN^\vee_{[0/T]}[\bA^d/T])=G^*(1) \cdot p(1).
$$

Now, $p(t^{-1})$ is the zero class in $\Kring_{\Gm^d}(\cP(w_1,\dots,w_d))$ so by adding $G^*(1) \cdot p(t^{-1})$ to the left hand side and dividing by $(1-t)$ both sides we obtain the desired equality.

The ring $A = \mathbb{Z}[t_1^{\pm 1}, \dots, t_d^{\pm 1}, t^{\pm 1}]$ is a UFD. Note that if $R$ is a UFD, and $f,g\in R\setminus0$, then $f$ is (either 0 or) a 0-divisor in $R/g$ iff g is (either 0 or) a 0-divisor in $R/f$. The quotient $A/(1-t)$ is a ring of Laurent polynomials and thus a UFD. The image of $p(t^{-1})$ in this ring is identified with $p(1)$ which is non-zero (and thus a non-zero divisor) in $\mathbb{Z}[t_1^{\pm 1}, \dots, t_d^{\pm 1}]$. It follows that $(1-t)$ is a non-zero divisor in $A/p(t^{-1})$.
\end{proof}

\begin{Lemma} \label{lemma: upper shriek of normal bundle}
     Let $\cX$, $\cY$, $\widetilde{\cX}$, $\widetilde{\cY}$ as defined in \S\ref{sec: notation-resolution}. Let $\cC:=\cC_{I_\bullet}$ the weighted normal cone  of $\cX$ in $\cY$ and $f^\cC: \widetilde{\cC} \to \cC$ be the weighted blowup of $\cX$ in $\cC$ induced by the weighted embedding of $\cX$ in $\cY$. Then the induced maps $f^!: \Kring(\cX) \to \Kring(\widetilde{\cX})$ and $(f^\cC)^!: \Kring(\cX) \to  \Kring(\widetilde{\cX})$ coincide.
\end{Lemma}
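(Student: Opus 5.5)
The plan is a deformation to the weighted normal cone, using Remark~\ref{rem:blowup-of-def-cone} and Corollary~\ref{cor:restriction of Gysin}. Set $\cD:=D_{\cX_\bullet}\cY$ with its flat map $\cD\to\bA^1$. Let $b_D:\widetilde{\cD}\to\cD$ be the weighted blowup of $\cD$ along $\cX\times\bA^1$, whose exceptional divisor is $\widetilde{\cX}\times\bA^1$. By Remark~\ref{rem:blowup-of-def-cone}, the fibres of $b_D$ are the following.
\begin{itemize}
\item Over $s\neq0$ (take $s=1$), the fibre is $f:\widetilde{\cY}\to\cY$.
\item Over $s=0$, the fibre is $f^{\cC}:\widetilde{\cC}\to\cC$.
\end{itemize}
Write $k_0:\{0\}\hookrightarrow\bA^1$ and $k_1:\{1\}\hookrightarrow\bA^1$ for the inclusions. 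Both are regular embeddings of codimension one, and they pull back to regular embeddings $\cC\hookrightarrow\cD$ and $\cY\hookrightarrow\cD$. These are Cartier divisors, since $\cD\to\bA^1$ is flat.

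\textbf{Step 1.} Define $b_D^!:\Kring(\cX\times\bA^1)\to\Kring(\widetilde{\cX}\times\bA^1)$ exactly as $f^!$ was defined in \S\ref{sub: Gysin for Weighted Blowups}. We work $\GL_n\times\Gm$-equivariantly with the lci map
\[
\hat b_D:\ D_{X\times\bA^1}D\smallsetminus(X\times\bA^2)\longrightarrow D,
\]
and use the presentation \eqref{eqn: weighted-blowup_pres} applied to $\cD$. Here $D_{X\times\bA^1}D$ is the deformation to the weighted normal cone of the embedding $X\times\bA^1\subset D$, and $D$ is the chart of $\cD$ over $Y$. By Remark~\ref{rem:blowup-of-def-cone}, restricting this construction to the fibre over $s=0$ or $s=1$ recovers $\hat f^{\cC}$ or $\hat f$ respectively. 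This is because deformation to the weighted normal cone commutes with the flat base change along $\bA^1$, the family $\cX_\bullet\times\bA^1$ being flat over $\bA^1$.

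\textbf{Step 2.} Show that $k_0^!\circ b_D^!=(f^{\cC})^!\circ k_0^!$ and $k_1^!\circ b_D^!=f^!\circ k_1^!$. Both identities follow from the commutativity of refined Gysin maps, Proposition~\ref{prop: properties pullbacks}\eqref{point 2: prop: properties pullbacks}, applied to the $\GL_n\times\Gm$-equivariant cartesian diagrams obtained by base changing along $k_0$ and $k_1$. Alternatively, they follow from Corollary~\ref{cor:restriction of Gysin} for the lci map $\hat b_D$ and its base change, since the relative dimensions agree.

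\textbf{Step 3.} Conclude using homotopy invariance. For $\alpha\in\Kring(\cX)$, let $\pi:\cX\times\bA^1\to\cX$ be the projection. Then $k_0^!\pi^*\alpha=\alpha=k_1^!\pi^*\alpha$, so
\[
(f^{\cC})^!\alpha=k_0^!\,b_D^!(\pi^*\alpha)\quad\text{and}\quad f^!\alpha=k_1^!\,b_D^!(\pi^*\alpha).
\]
By Proposition~\ref{prop: homotopy invariance twisted}, or directly by Thomason's homotopy invariance, the class $b_D^!(\pi^*\alpha)$ equals $\pi'^*\beta$ for a unique $\beta\in\Kring(\widetilde{\cX})$, where $\pi':\widetilde{\cX}\times\bA^1\to\widetilde{\cX}$ is the projection. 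Hence both restrictions equal $\beta$: the maps $k_0^!\pi'^*$ and $k_1^!\pi'^*$ are both the identity, because each $k_a$ is a section of the projection. This gives $(f^{\cC})^!\alpha=f^!\alpha$.

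The main obstacle is Step 1. One must check that the operational class $\hat b_D^!$ exists and restricts correctly to the fibres. In particular, the identification of the $\Gm$-actions in \eqref{eqn: GmXGLn-action-def-normal-cone} has to be compatible with the base change, and the diagrams analogous to \eqref{eqn: diagram cone} are not cartesian. The latter is handled by passing to reduced structures as in \S\ref{sec:excision}. I would first try to verify the base-change compatibility directly on the explicit local model $D_0\bA^d\times\bA^1$, using the flat-local nature of Definition~\ref{def:weighted-blowup}.
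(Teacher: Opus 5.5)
Your proposal is correct and follows essentially the paper's proof: deform to the weighted normal cone, blow up $\cD$ along $\cX\times\bA^1$, and identify each fibrewise Gysin map with that of $b_D$ via Corollary~\ref{cor:restriction of Gysin}, where the relative dimensions agree because $\widetilde{\cD}$ and $\cD$ are flat over $\bA^1$. Your Step~3 (homotopy invariance on $\widetilde{\cX}\times\bA^1$) spells out what the paper leaves implicit when it says it suffices to show $f_D^!=(f_D)_t^!$ for every $t$. In Step~2 you need Corollary~\ref{cor:restriction of Gysin} together with the commutativity of Proposition~\ref{prop: properties pullbacks}\eqref{point 2: prop: properties pullbacks}, not instead of it.
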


\begin{proof}
    The proof is similar to \cite[Theorem 5.2]{Arena-Obinna}. Consider the weighted blowup construction of the deformation to the normal cone explained in Remark~\ref{rem:blowup-of-def-cone}. More specifically, we have a weighted blowup diagram over $\A^1$
    $$
    \begin{tikzcd}
    \cZ \arrow[d] \arrow[r, hook]   & \widetilde{\cD} \arrow[d, "f_D"] \\
    \cX \times \A^1 \arrow[r, hook] & \cD               \end{tikzcd}
    $$
    where $\cD$ is the deformation to the normal cone. Recall also that $(f_D)_t=f$ if $t\neq 0$ and $(f_D)_0=f^{\cC}$. To conclude, it is enough to show $f_D^!=(f_D)_t^!$ for every $t \in \A^1$. Corollary~\ref{cor:restriction of Gysin} tells us that it is enough to show that $d(f_D)=d\left((f_D)_t\right)$ for every $t\in \A^1$. The dimensional equality is a simple consequence of the fact that $\widetilde{\cD}$ and $\cD$ are both flat over $\bA^1$.    
\end{proof}

\begin{Prop}\label{prop: f upper shriek}
     In the setting of Proposition~\ref{prop:existence of gamma}, we have the following identification
     $$ f^{!}(1) = \frac{ p(t^{-1}) - p(1)}{ t-1} \in \Kring(\widetilde{\cX})$$
     where $t=[\sO_{\widetilde{\cX}}(1)]$.
\end{Prop}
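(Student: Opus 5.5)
Our plan is to mimic the proof of Lemma~\ref{lemma: T-equivariant case}, now in the global setting, using Lemma~\ref{lemma: upper shriek of normal bundle} to reduce first to the case of the weighted normal cone. Concretely, by Lemma~\ref{lemma: upper shriek of normal bundle} we have $f^!(1)=(f^{\cC})^!(1)$, where $f^{\cC}:\widetilde{\cC}\to\cC$ is the weighted blowup of the zero section $\cX\subset\cC$. So it suffices to compute $(f^{\cC})^!(1)$. Here the geometry is explicit: $\widetilde{\cC}$ is the total space of $\sO_{\widetilde{\cX}}(-1)$ over $\widetilde{\cX}=\cP(\cC)$, with $\widetilde{\cX}$ embedded as the zero section $j$. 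By Theorem~\ref{prop: twisted weighted projection formula}, $\Kring(\widetilde{\cX})\simeq \Kring(\cX)[t^{\pm1}]/(p(t^{-1}))$ with $t=[\sO_{\widetilde\cX}(1)]$.

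Next we apply the compatibility of Gysin pullbacks with proper pushforward, Proposition~\ref{prop: properties pullbacks}\eqref{point 1: prop: properties pullbacks}, applied to the (non-cartesian, but reduced-equivalent, cf.\ Remark~\ref{rem:fiber-prod-vs-exceptional}) blowup square for $f^{\cC}$. This gives $j_*(f^{\cC})^!(1)=(f^{\cC})^*i_*(1)$. We then apply $j^*$. On the left, the self-intersection formula (Proposition~\ref{prop: self intersection formula}, applied equivariantly on the $\GL_n\times\Gm$-presentation) yields $\lambda_{-1}(\sO(-1)^\vee)\cdot(f^{\cC})^!(1)=(1-t)(f^{\cC})^!(1)$. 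On the right, $j^*(f^{\cC})^*i_*(1)=g^*i^*i_*(1)=g^*\lambda_{-1}(\sN^\vee_{\cX/\cC})$. This is the non-equivariant specialization of $p(t^{-1})$ at $t=1$, namely $p(1)$; indeed $\sN^\vee$ has the weight grading forgotten, matching Definition~\ref{def: euler class twisted weighted vector bundle}. Hence $(1-t)\,f^!(1)=p(1)$, and since $p(t^{-1})=0$ in $\Kring(\widetilde\cX)$ we get $(1-t)f^!(1)=p(1)-p(t^{-1})$, i.e.\ $(1-t)\bigl(f^!(1)-\gamma\bigr)=0$ with $\gamma=\frac{p(t^{-1})-p(1)}{t-1}$ a genuine Laurent polynomial.

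The main obstacle is cancelling $(1-t)$, i.e.\ showing that $1-t$ is a non-zero-divisor in $\Kring(\cX)[t^{\pm1}]/(p(t^{-1}))$. Over a general $\Kring(\cX)$ there is no UFD argument as in Lemma~\ref{lemma: T-equivariant case}. We would try the following. First, by the splitting principles (Propositions~\ref{prop:classical-splitting-principle} and \ref{lemma: BT-toBGwn}), pull back injectively to a cover where the twisted weighted bundle becomes a direct sum of line bundles $L_i$ with weights $w_i$, so that $p(t^{-1})=\prod(1-[L_i]^{-1}t^{-w_i})$; the projective bundle formula is compatible with these pullbacks. There, $p(t^{-1})$ is monic up to a unit in $t^{-1}$, so the quotient is a free $\Kring(\cX)$-module with basis $1,t^{-1},\dots,t^{-(w-1)}$. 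Multiplication by $1-t$ then has determinant equal, up to a unit, to the resultant $\pm p(1)$ evaluated appropriately. This is problematic if $p(1)$ is nilpotent, since $p(1)=\lambda_{-1}(\sN^\vee)$ usually is.

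So instead we would avoid division entirely by an alternative route. Apply the same $j_*$-argument universally: the classifying case is $[\bA^d/G_{\mathbf w,\mathbf n}]$, reduced via Proposition~\ref{lemma: BT-toBGwn} and the splitting principle to the torus case $[\bA^d/\Gm^d]$, where Lemma~\ref{lemma: T-equivariant case} gives the formula. We then pull back along the classifying map $\cX\to BG$ of $\cC$, using that $(f^{\cC})^!(1)$ is compatible with flat (and Gysin) base change of the cone by Corollary~\ref{cor:restriction of Gysin}. This transports the identity $F^!(1)=\gamma$ from the universal case to $\cX$ without needing $1-t$ to be a non-zero-divisor there.
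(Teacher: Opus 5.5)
Your proposal is correct and, once you set aside the direct division attempt, it is essentially the paper's proof. You reduce to the weighted normal cone via Lemma~\ref{lemma: upper shriek of normal bundle}, use Propositions~\ref{prop:classical-splitting-principle} and \ref{lemma: BT-toBGwn} to get to the torus model $[\bA^d/\Gm^d]$ where Lemma~\ref{lemma: T-equivariant case} applies, and pull back along a classifying map using Proposition~\ref{prop: properties pullbacks} and Corollary~\ref{cor:restriction of Gysin}; the only difference is cosmetic: the paper first replaces $\cX$ by $\cX'=\cX\times_{BG_{\mathbf w,\mathbf n}}B\Gm^d$ (injective on $\Kring(\widetilde\cX)$) and pulls back from $B\Gm^d$, whereas you settle the universal case over $BG_{\mathbf w,\mathbf n}$ and pull back along $\cX\to BG_{\mathbf w,\mathbf n}$. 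Your diagnosis that $1-t$ is typically a zero divisor in $\Kring(\widetilde\cX)$ because $p(1)$ is usually nilpotent is correct, and it is exactly why the argument has to pass through the torus case.
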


\begin{proof}
    Without loss of generality, we can assume that $\cY$ is a twisted weighted vector bundle over $\cX$ by Lemma~\ref{lemma: upper shriek of normal bundle}.
    Combining Proposition~\ref{prop:classical-splitting-principle} and Proposition~\ref{lemma: BT-toBGwn}, we find $\cX' \to \cX$ such that the induced pullback $\Kring(\cX) \to \Kring (\cX')$ is injective and the pullback $\cY' \to \cX'$  of $\cY \to \cX$ is a direct sum of line bundles over $\cX'$. Note that:
    \begin{enumerate}
        \item[$\bullet$] the map $\Kring(\widetilde{\cX}) \to \Kring (\widetilde{\cX}')$ induced by $\widetilde{\cX}'= \widetilde{\cX} \times_{\cY} \cY' \simeq \widetilde{\cX} \times_{\cX} \cX' \to \widetilde{\cX}$  is injective, and
        \item[$\bullet$] the pullback of $f^!(1) \in \Kring(\widetilde{\cX})$ to $\Kring(\widetilde{\cX}')$ is equal to $(f')^!(1)$ where $f': Bl_{\cX'}\cY' \to \cY'$.
    \end{enumerate}    
    We may thus replace $\cX$, $\cY$ with $\cX'$ and $\cY'$. Note also that $\cX'$ comes with a map to $B\Gm^d$ given by
    $$
    \cX' \simeq \cX \times_{B G_{\mathbf{a},\mathbf{w}}} B \Gm^d\longrightarrow B\Gm^d
    $$
    and $\cY'= [\mathbb{A}^d /\Gm^d] \times_{B \Gm^d} \cX'$. Thus, we have the following cartesian diagram.
$$
\begin{tikzcd}
\widetilde \cX' \arrow[rd, shift right] \arrow[dd] \arrow[rr, "\phi", shift right=2] &  & \left[\mathcal{P}(w_1,...,w_d)/\G_m^d\right] \arrow[rd, shift right] \arrow[dd] & \\
& \widetilde \cY' \arrow[rr, shift right=2] \arrow[dd, "f"] & & \left[Bl_0^{\bf{w}}\A^d/\G_m^d\right] \arrow[dd, "F"] \\
\cX' \arrow[rd, shift right] \arrow[rr, shift right=2] &  & \left[0/\G_m^d\right] \arrow[rd, shift right]   & \\ 
& \cY' \arrow[rr, shift right=2] &  & \left[\mathbb{A}^d/\G_m^d\right]
\end{tikzcd}
$$

By Proposition~\ref{prop: properties pullbacks} and Corollary~\ref{cor:restriction of Gysin}, we have that $$f^!(1)=\phi^*F^!(1)=\phi^*\left(\frac{\lambda_{-1}^{\Gm}([\bA^d/\G_m^d]^\vee)-\lambda_{-1}([\bA^d/\G_m^d]^\vee)}{\lambda_{-1}(\sO_{[\cP(w_1,\dots,w_d)/\G_m^d]}(1))}\right)=\frac{ p(t^{-1}) - p(1)}{ t-1} $$ concluding the proof.
\end{proof}

\subsection{The Blowup exact Sequence}\label{sub:blowup-main-theorem}

Given $E\rightarrow X$ is a twisted weighted vector bundle with not necessarily distinct weights $\mathbf{w}:=(w_1,\dots,w_d)$, the total weight is $w:=\sum_{i=1}^d w_i$.

\begin{Lemma}\label{lem:cohomology-proj-bundle}
    Let $g:\cP \rightarrow \cX$ be a twisted weighted projective bundle with total weight $w$. Given two integers $-w < n \leq 0$ and $m\geq 0$, then
    $$ R^mg_*\sO_{\cP}(n) = \begin{cases}
        \sO_{\cX} \qquad {\rm if }\, n=m=0 \\
        0 \qquad \, {\rm otherwise}
    \end{cases}$$
    Moreover, we have that $R^mg_*\sO_{\cP}(n)=0$ for every $n\geq 0$ and $m\geq 1$.
\end{Lemma}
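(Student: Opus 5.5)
The plan is to reduce to a local computation on weighted projective stacks, using the structure of $\cP$ as a quotient. Since the statement concerns the coherent sheaves $R^mg_*\sO_{\cP}(n)$ on $\cX$, and the formation of higher direct images commutes with flat base change, we may check it smooth-locally on $\cX$. After passing to a smooth atlas $U\to\cX$ trivializing the twisted weighted vector bundle $E$ with $\cP=\cP(E)$, we get $\cP_U\simeq[(\bA^d_U\smallsetminus 0_U)/\Gm]=\cP_U(w_1,\dots,w_d)$, with $\sO(n)$ the sheaf attached to the weight-$n$ character. We may further assume $U=\spec A$ is affine.

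On $\cP_U(\mathbf{w})=[(\bA^d_U\smallsetminus 0)/\Gm]$, the group $\Gm$ is linearly reductive, so taking invariants is exact. Hence
$$\oH^m(\cP_U,\sO(n))=\oH^m(\bA^d_U\smallsetminus 0,\sO)_n,$$
the weight-$n$ piece for the grading (with the sign convention of \S\ref{conventions}, so that $x_i$ has degree $w_i$). The next step is the standard local cohomology computation. Write $S=A[x_1,\dots,x_d]$ with $\deg x_i=w_i>0$. Then:
\begin{itemize}
\item $\oH^0(\bA^d\smallsetminus 0,\sO)=S$ for $d\ge 2$;
\item $\oH^m(\bA^d\smallsetminus 0,\sO)=H^{m+1}_{\mathfrak m}(S)=0$ for $1\le m\le d-2$;
\item $\oH^{d-1}(\bA^d\smallsetminus 0,\sO)=H^d_{\mathfrak m}(S)=(x_1\cdots x_d)^{-1}A[x_1^{-1},\dots,x_d^{-1}]$.
\end{itemize}
This follows from the Čech complex of the cover $\{x_i\neq0\}$, exactly as in \cite[III.5]{Hartshorne}-type arguments, and the grading is respected. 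In the degenerate case $d=1$ one instead has $\oH^0=S_{x_1}$; we handle it separately, since $\cP$ is then $B\mu_{w_1}$ over $U$.

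Reading off degrees now gives the result. $S_n=0$ for $n<0$ and $S_0=A$. The top cohomology $H^d_{\mathfrak m}(S)$ is concentrated in degrees $\le -\sum w_i=-w$. Therefore, for $-w<n\le 0$, the only nonzero group is $\oH^0$ in degree $n=0$, namely $A$. For $n\ge0$ and $m\ge1$ everything vanishes, because all higher cohomology lives in negative degrees. This gives the ``moreover'' statement. For $d=1$, the group $\oH^0=A[x_1^{\pm1}]$ has degree-$n$ part nonzero when $w_1\mid n$, and the allowed range $-w_1<n\le0$ only contains $n=0$, so the conclusion persists; there is no higher cohomology since the space is affine.

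Finally, we must globalize the identification $g_*\sO_{\cP}=\sO_{\cX}$: the natural map $\sO_{\cX}\to g_*\sO_{\cP}$ is an isomorphism because it is so locally, by the computation above.

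The main obstacle is to justify the base-change and invariants step for stacks. Concretely, this means showing that $R^mg_*$ for the tame morphism $g$ can be computed on the atlas as the $\Gm$-invariants of the cohomology of the $\Gm$-torsor $E\smallsetminus E_0$. I would do this by writing $g$ as the composite of the affine-over-$\cX$ map $E\smallsetminus E_0\to\cX$ (whose pushforward is the $\Gm$-equivariant sheaf computed by local cohomology) with exact invariants, using Remark~\ref{rem:natural-grading}.
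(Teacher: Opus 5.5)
Your proposal is correct and follows the same route as the paper: you reduce smooth-locally to the case where $\cX$ is affine, using the canonical map $\sO_{\cX}\to g_*\sO_{\cP}$ to globalize the case $n=m=0$, and then compute the cohomology of $\sO(n)$ on the weighted projective stack $\cP(w_1,\dots,w_d)$. The only difference is that you carry out the graded Čech/local cohomology computation explicitly, including the degenerate case $d=1$, whereas the paper cites Dolgachev's computation in \cite[\S1.4]{Dol}.
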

\begin{proof}
  If $m\neq 0$ or $n\neq0$, the question is clearly local. If $n=m=0$, the question is also local since we have the canonical morphism $\sO_{\cX}\rightarrow g_*\sO_{\cP}$. Thus, we can assume $\cX$ to be affine. Then the isomorphism follows from the computations done in \cite[\S1.4]{Dol}.
\end{proof}

\begin{Cor}\label{cor:push-forward-of-1}
    In the setting of \S\ref{sec: notation-resolution}, the natural morphism $\sO_{\cY}\rightarrow Rf_*\sO_{\widetilde{\cY}}$ is a quasi-isomorphism of complexes. As a consequence, $f_*(1)=1$ in $\Kgroup(\cY)$.
\end{Cor}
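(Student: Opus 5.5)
The plan is to prove the quasi-isomorphism $\sO_{\cY}\to Rf_*\sO_{\widetilde{\cY}}$ locally on $\cY$, and then read off $f_*(1)=1$ from the definition $f_*[\sO]=\sum_i(-1)^i[R^if_*\sO_{\widetilde{\cY}}]$.

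First, both the formation of $Rf_*$ and the weighted blowup commute with flat base change. The latter is \cite[Corollary 3.2.14]{quek-rydh-weighted-blowup}. Whether a morphism of complexes is a quasi-isomorphism can be checked smooth-locally. So we may replace $\cY$ by a smooth atlas. Over that atlas, Definition~\ref{def:weighted-blowup} says the blowup is pulled back along a flat map $p:\cY'\to\bA^d$ from $Bl_0^{\mathbf{w}}\bA^d\to\bA^d$. By flat base change again, it suffices to treat $F:Bl_0^{\mathbf{w}}\bA^d\to\bA^d$. Since $\bA^d$ is affine, we must show $H^0(Bl_0^{\mathbf{w}}\bA^d,\sO)=k[x_1,\dots,x_d]$ and $H^m=0$ for $m\ge1$.

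For the local computation I would use the description from \S\ref{sub:blowup-0-An}: $Bl_0^{\mathbf{w}}\bA^d$ is the total space of $\sO_{\cP}(-1)$ over $\cP=\cP(w_1,\dots,w_d)$, with projection $\pi$. The map $\pi$ is affine, and
$$
\pi_*\sO_{Bl}=\bigoplus_{n\ge0}\sO_{\cP}(n),
$$
since the coordinate $s$ has degree $-1$ and the fibre coordinate is dual to $\sO(-1)$. Hence
$$
H^m(Bl,\sO)=\bigoplus_{n\ge 0}H^m(\cP,\sO_{\cP}(n)).
$$
By Lemma~\ref{lem:cohomology-proj-bundle}, applied with $\cX$ a point, these groups vanish for $m\ge1$ and $n\ge0$. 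For $m=0$ we get
$$
\bigoplus_{n\ge0}H^0(\cP,\sO(n))=\bigoplus_{n\ge0}k[x'_1,\dots,x'_d]_n=k[x'_1,\dots,x'_d].
$$
It remains to check that the natural map $k[x_1,\dots,x_d]\to H^0(Bl,\sO)$ is this isomorphism. On global sections, $x_i=s^{w_i}x'_i$ is the degree-$0$ element that corresponds to the section $x'_i$ of $\sO(w_i)$ sitting in the summand $n=w_i$. Therefore the map sends $x_i\mapsto x_i'$ and is an isomorphism of graded rings.

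The $K$-theoretic consequence is then immediate. $\widetilde{\cY}$ is smooth, so $1=[\sO_{\widetilde{\cY}}]$, and
$$
f_*(1)=\sum_i(-1)^i[R^if_*\sO_{\widetilde{\cY}}]=[\sO_{\cY}]=1.
$$

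The main obstacle is bookkeeping rather than substance. One must make sure the identification $\pi_*\sO=\bigoplus_{n\ge0}\sO(n)$ has the right sign, so that it is not $\bigoplus_{n\le 0}$. This is fixed by the paper's convention that $s$ has degree $-1$, together with Lemma~\ref{lem:cohomology-proj-bundle}'s vanishing for $n\ge0$. One must also justify that the smooth-local reduction is compatible with the stacky $Rf_*$; since $f$ is representable and proper, ordinary flat base change suffices for this.
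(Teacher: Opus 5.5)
Your proof is correct, but it takes a different route from the paper's. The paper first localizes to $\spec A$ with $A$ local, and then:
\begin{itemize}
\item passes to the $I$-adic completion and applies the theorem on formal functions for proper stacks;
\item replaces the $I$-adic filtration by the cofinal $I_{\widetilde{\cX}}$-adic one;
\item kills $\oH^i(\widetilde{\cY},\sO/I_{\widetilde{\cX}}^{n+1})$ by induction on $n$, using the graded pieces $I_{\widetilde{\cX}}^n/I_{\widetilde{\cX}}^{n+1}\simeq\sO_{\widetilde{\cX}}(n)$ and Lemma~\ref{lem:cohomology-proj-bundle}.
\end{itemize}
The isomorphism on $R^0$ is obtained separately from normality. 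You instead exploit that Definition~\ref{def:weighted-blowup} supplies a single flat chart $p\colon\cY'\to\bA^d$. Two flat base changes then reduce everything to the model $Bl_0^{\mathbf{w}}\bA^d\to\bA^d$, where the affine projection to $\cP(w_1,\dots,w_d)$ turns the computation into $\bigoplus_{n\ge0}\oH^m(\cP,\sO(n))$. Your argument is shorter and avoids completion and the formal functions theorem for stacks. It also gives the $H^0$ isomorphism and the higher vanishing in one computation, and it makes the sign bookkeeping transparent: with $s$ of degree $-1$, the $\Gm$-invariants of $k[x'][s]$ are exactly the non-negative degree parts of $k[x']$. The paper's argument mirrors the classical proof. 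It uses only the exceptional divisor and its conormal line bundle, not the existence of the flat chart $p$, so it works directly from the Rees-algebra description of the blowup. Both proofs ultimately rest on the same vanishing, $\oH^m(\cP,\sO(n))=0$ for $m\ge1$ and $n\ge0$.

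One correction to your final paragraph: $f$ is not representable in general. For example, $Bl_0^{\mathbf{w}}\bA^d$ has $\mu_{w_i}$-stabilizers along the exceptional divisor, while $\bA^d$ is a scheme. The flat base change you use still holds, for either of two reasons:
\begin{itemize}
\item $f$ is quasi-compact and quasi-separated, and the stabilizers of its source are diagonalizable, hence linearly reductive;
\item concretely, after replacing $\cY'$ by a scheme atlas, $Bl_0^{\mathbf{w}}\bA^d\times_{\bA^d}\cY'$ is a quotient $[U/\Gm]$ of a scheme $U$. Its quasi-coherent cohomology is the $\Gm$-invariant part of that of $U$, where ordinary flat base change applies.
\end{itemize}
The paper's localization step relies on the same fact. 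So this is a misstated justification, not a gap.
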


\begin{proof}
    Note that, since everything is normal, we have that $\sO_{\cY} \rightarrow f_*\sO_{\widetilde{\cY}}$ is an isomorphism of sheaves, therefore it is enough to show that $R^if_*\sO_{\widetilde{\cY}}=0$ for $i>0$. 
    Since weighted blowups commute with flat base change (see \cite[Corollary 3.2.4]{quek-rydh-weighted-blowup}) and being the statement local on $\cY$, we can assume $\cY$ is the spectrum of an affine local ring $(A,\mathfrak{m})$. Let us denote by $I\subset A$ the (complete intersection) ideal associated to $\cX \subset \cY$. If $I \not\subset \mathfrak{m}$ (i.e. $I=A$), then the morphism $f$ is actually an isomorphism by construction and there is nothing to prove. Thus we can assume $I\subset \mathfrak{m}$. Therefore the $I$-completion morphism $\spec \widehat{A} \rightarrow \spec A$ is faithfully flat, and we can reduce to the case when $A$ is $I$-complete. We can then use the Theorem on Formal Functions for proper algebraic stacks (see for instance \cite[Theorem 1.4]{Olss}) to get the isomorphism of $A$-modules
    $$ R^if_*\sO_{\widetilde{\cY}}\simeq  \lim_n\oH^i\left(\widetilde{\cY}, \sO_{\widetilde{\cY}}/I^{n+1}\right)$$
    and thus, it is enough to prove that the right-hand side vanishes. In contrast with the classical case, the fiber over $\spec(A/I) \subset \spec A$ is not the exceptional divisor $\widetilde{\cX}$, but an infinitesimal thickening of it. We claim that this is enough to show that
    $$\lim_n\oH^i\left(\widetilde{\cY}, \sO_{\widetilde{\cY}}/I^{n+1}\right) \simeq \lim_n\oH^i\left(\widetilde{\cY}, \sO_{\widetilde{\cY}}/I_{\widetilde{\cX}}^{n+1}\right).$$
    Indeed, by the construction of the weighted blowup, there exists an integer $N>0$ such that $I^N\sO_{\widetilde{\cY}}\subset I_{\widetilde{\cX}}^N\subset I\sO_{\widetilde{\cY}} $  thus the claim follows by the functoriality of the $i$-th cohomology group.
    Finally, it remains to show that $ \lim_n\oH^i(\widetilde{\cY}, \sO_{\widetilde{\cY}}/I_{\widetilde{\cX}}^{n+1})=0$ for $i>0$. To do that, we will show that $\oH^i(\widetilde{\cY}, \sO_{\widetilde{\cY}}/I_{\widetilde{\cX}}^{n+1})=0$ for every $n$ and we will proceed by induction on $n$. 
    If $n=0$, the result follows from Lemma~\ref{lem:cohomology-proj-bundle}. Suppose now that $n>0$. Then, we have the exact sequence of $\sO_{\widetilde{\cY}}$-modules
    $$ 0 \longrightarrow I_{\widetilde{\cX}}^n/I_{\widetilde{\cX}}^{n+1} \longrightarrow \sO_{\widetilde{\cY}}/I_{\widetilde{\cX}}^{n+1} \longrightarrow \sO_{\widetilde{\cY}}/I_{\widetilde{\cX}}^{n}\longrightarrow 0$$
    and, taking cohomology, we get the exact sequence
    $$ \oH^i\left( \widetilde{\cX}, I_{\widetilde{\cX}}^n/I_{\widetilde{\cX}}^{n+1} \right) \longrightarrow  \oH^i\left( \widetilde{\cY},\sO_{\widetilde{\cY}}/I_{\widetilde{\cX}}^{n+1}  \right)  \longrightarrow \oH^i\left( \widetilde{\cY},\sO_{\widetilde{\cY}}/I_{\widetilde{\cX}}^{n}  \right).$$
    The right-hand term is $0$ by the inductive argument. Moreover, since $\widetilde{\cX}\subset \widetilde{\cY}$ is the exceptional divisor, we have that $I_{\widetilde{\cX}}^n/I_{\widetilde{\cX}}^{n+1}\simeq (I_{\widetilde{\cX}}/I_{\widetilde{\cX}}^2)^n\simeq \sO_{\widetilde{\cX}}(n)$. Finally, the left-hand term is $0$ by Lemma~\ref{lem:cohomology-proj-bundle}.
    \end{proof}
\begin{Cor}\label{cor: push-forward-upper-shriek}
    In the setting of \S\ref{sub: Gysin for Weighted Blowups}, we have:
        $$g_*f^!(\alpha)=\alpha$$ for each $\alpha \in \Kring(\cX)$.
\end{Cor}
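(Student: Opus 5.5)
The plan is to reduce to the case $\alpha=1$ and then compute $g_*\gamma$ directly from the explicit formula for $\gamma$. By Proposition~\ref{prop:existence of gamma}, $f^!(\alpha)=g^*(\alpha)\cdot\gamma$ with $\gamma=f^!(1)$. The projection formula~\eqref{eqn: Projection Formula} for the proper morphism $g:\widetilde{\cX}\to\cX$ then gives
\[
g_*f^!(\alpha)=g_*(g^*\alpha\cdot\gamma)=\alpha\cdot g_*(\gamma).
\]
So it suffices to show $g_*(\gamma)=1$ in $\Kring(\cX)$.

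By Proposition~\ref{prop: f upper shriek},
\[
\gamma=\frac{p(t^{-1})-p(1)}{t-1},\qquad t=[\sO_{\widetilde{\cX}}(1)].
\]
I will expand this as an honest Laurent polynomial in $t$ with coefficients pulled back from $\Kring(\cX)$. Write $p(t^{-1})=\sum_{k=0}^{w}a_k t^{-k}$ with $a_k\in\Kring(\cX)$, $a_0=1$, and $w$ the total weight. Then
\[
\frac{t^{-k}-1}{t-1}=-t^{-k}\,\frac{t^{k}-1}{t-1}=-(t^{-1}+t^{-2}+\dots+t^{-k}),
\]
so
\[
\gamma=-\sum_{k=1}^{w}a_k\sum_{j=1}^{k}t^{-j}.
\]
Every exponent $-j$ here satisfies $-w<-j<0$, except $j=w$, which occurs only in the top term $k=w$. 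This boundary term is the main obstacle.

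Next I apply $g_*$ termwise, again using the projection formula, so that $g_*(g^*a_k\, t^{-j})=a_k\, g_*[\sO(-j)]$. By Lemma~\ref{lem:cohomology-proj-bundle}, $Rg_*\sO(n)=0$ for $-w<n<0$, so all terms with $j<w$ vanish. The only survivor is $-a_w\,g_*[\sO(-w)]$.

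To handle this term I will use relative Serre duality on the twisted weighted projective bundle. The relative dualizing sheaf should be $\omega_g\simeq \sO(-w)\otimes g^*\det(\sN^\vee)$ up to sign conventions. This gives $Rg_*\sO(-w)\simeq (g_*\sO\otimes\det \sN)[-(d-1)]$, i.e.
\[
g_*[\sO(-w)]=(-1)^{d-1}[\det\sN],
\]
where $d$ is the codimension. On the other hand, $a_w=(-1)^d[\det\sN^\vee_{\cX/\cY}]$ from the product formula defining $p$. Hence
\[
-a_w\,g_*[\sO(-w)]=-(-1)^d(-1)^{d-1}=1,
\]
as desired.

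Because the sign and twist bookkeeping in the duality step is delicate, I would first try to verify it by reduction. Via the splitting principle (Propositions~\ref{prop:classical-splitting-principle} and~\ref{lemma: BT-toBGwn}, whose pullbacks are injective), one can reduce to the split case $[\cP(w_1,\dots,w_d)/\Gm^d]$. There, $g_*\sO(-w)$ can be computed directly by the Čech/local-cohomology computation of \cite[\S1.4]{Dol} on $\bA^d\smallsetminus0$: the only degree $-w$ contribution is the monomial $x_1^{-1}\cdots x_d^{-1}$ in $H^{d-1}$, which carries character $t_1\cdots t_d$. This matches $a_w=(-1)^d t_1^{-1}\cdots t_d^{-1}$.

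Alternatively, one can sidestep duality altogether. By Lemma~\ref{lemma: upper shriek of normal bundle}, compatibility of $f^!$ with proper pushforward (Proposition~\ref{prop: properties pullbacks}), and Corollary~\ref{cor:push-forward-of-1}, we get
\[
i^*f_*j_*\gamma=i^*i_*g_*\gamma=\lambda_{-1}(\sN^\vee)\,g_*\gamma.
\]
Comparing this with the same computation applied to $i_*1$ might identify $g_*\gamma$. However, $\lambda_{-1}(\sN^\vee)$ need not be a nonzerodivisor, so I would prefer the direct computation above.
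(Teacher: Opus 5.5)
Your proposal is correct. It follows the paper up to the point where the boundary term $-a_w t^{-w}$ appears, and then handles that term differently.

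\textbf{The paper's route.} The paper never computes $g_*[\sO(-w)]$. It adds $p(t^{-1})$, which is zero in $\Kring(\widetilde{\cX})\simeq\Kring(\cX)[t^{\pm1}]/(p(t^{-1}))$, to the expansion of $\gamma$. This cancels the $t^{-w}$ term and leaves $\gamma=b_0-\sum_{j=1}^{w-1}t^{-j}\sum_{i=j+1}^{w}b_i$. Every non-constant term now has exponent in the range $-w<-j<0$, so Lemma~\ref{lem:cohomology-proj-bundle} kills it and $g_*\gamma=b_0=1$. In your notation, the relation says $a_w t^{-w}=-\sum_{k<w}a_k t^{-k}$ in $\Kring(\widetilde{\cX})$. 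Hence $a_w\, g_*[\sO(-w)]=-a_0=-1$ follows in one line. This is exactly the identity your duality step is built to prove, so your \emph{main obstacle} dissolves using only the presentation of $\Kring(\widetilde{\cX})$ you already have.

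\textbf{Your route.} You need one of two inputs. The first is relative Serre duality for twisted weighted projective stack bundles. This is not established in the paper, and you only assert it up to sign and twist conventions. The second is the splitting-principle reduction to $[\cP(w_1,\dots,w_d)/\Gm^d]\to B\Gm^d$, using injectivity of the pullback on $\Kring(\cX)$ and flat base change for $g_*$. That must be followed by the explicit \v{C}ech computation of $H^{d-1}(\cP(\mathbf{w}),\sO(-w))$, tracking the torus character of $x_1^{-1}\cdots x_d^{-1}$. With the second path carried out, your argument is complete.

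\textbf{Comparison.} Your route gives the explicit formula $g_*[\sO(-w)]=(-1)^{d-1}[\det\sN_{\cX/\cY}]$ as a byproduct. For the corollary itself, though, it is noticeably heavier than the paper's argument, which uses nothing beyond the defining relation $p(t^{-1})=0$ and the vanishing range of Lemma~\ref{lem:cohomology-proj-bundle}.
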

\begin{proof}
    Thanks to Proposition~\ref{prop:existence of gamma}, it is enough to show that $g_*f^{!}(1)=1$. Thanks to Proposition~\ref{prop: f upper shriek}, we have that
    $$ f^{!}(1)= \frac{p(t^{-1})-p(1)}{t-1}$$
    with $t=[\sO_{\tcX}(1)]$. Recall that $p(t^{-1})= \sum_{i=0}^{w} b_it^{-i}$ is a polynomial in $t^{-1}$ of degree $w$ where $w$ is the total weight and every $b_i$ is in the image of the pullback of $g$. Therefore, we have
      \begin{align*}
         f^{!}(1) & = \frac{  \sum_{i=0}^{w} b_it^{-i} -{ \sum_{i=0}^{w} b_i}}{t-1} \\
         & =(-t^{-1})  \sum_{i=1}^{w} b_i\frac{(t^{-i}-1)}{t^{-1}-1} \\
         & = (-t^{-1})\sum_{i=1}^{w} b_i \sum_{j=0}^{i-1} t^{-j} \\
         & = -\sum_{j=0}^{w-1} t^{-j-1} \left(\sum_{i=j+1}^{w} b_i\right).
     \end{align*}
     Since $p(t^{-1})$ is zero in $\Kring(\widetilde{\cX})$, we can add it to the previous expression to get
     \begin{align*}
         f^{!}(1) &= -\sum_{j=1}^{w} t^{-j} \left(\sum_{i=j}^{w} b_i\right) + \sum_{j=0}^{w} b_jt^{-j}= b_0 - \sum_{{j=1}}^{w-1}t^{-j} \left(\sum_{i=j+1}^{w} b_i\right).
     \end{align*}
     The result follows applying $g_*$ and noticing that $b_0=1$, $g_*(1)=1$ and $g_*t^{-j}=0$ for every $j=1,\dots,w-1$, thanks to Lemma~\ref{lem:cohomology-proj-bundle}.
\end{proof}
\begin{Prop}\label{prop:fulton}
    In the setting of \S\ref{sub: Gysin for Weighted Blowups}, we have the following results:
    \begin{enumerate}[label=$(\alph*)$]
        \item\label{point: a} for every $\alpha \in \Kgroup(\cX)$, we have that $f^*i_*(\alpha)=j_*f^!(\alpha)$ in $\Kgroup(\widetilde{\cY})$;
        \item\label{point: b} for every $\beta \in  \Kgroup(\cY)$, we have that $f_*f^*(\beta)=\beta$ in $\Kgroup(\cY)$;
        \item\label{point: c} if $\widetilde{\alpha} \in \Kgroup(\widetilde{\cX})$ such that $g_*(\widetilde{\alpha})=0$ and $j_*(\widetilde{\alpha})=0$, then $\widetilde{\alpha}=0$.
    \end{enumerate}
    \end{Prop}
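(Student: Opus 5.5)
Each of the three parts is the weighted analogue of Fulton's Proposition 6.7, and I would prove them in turn, with all the work done equivariantly on the quasi-affine presentations $X\subset Y$ with their $\GL_n\times\Gm$-actions.

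For \ref{point: a}, I would apply Proposition~\ref{prop: properties pullbacks}\eqref{point 1: prop: properties pullbacks} to the diagram \eqref{eqn: diagram cone}, with $\hat f$ in place of $f$. The map $\hat f$ is lci, so by Lemma~\ref{lem:lci-factorization} it factors as a regular embedding followed by a smooth morphism. By point \eqref{point 3: prop: properties pullbacks} it is therefore enough to check compatibility with proper pushforward for each factor. I take the proper map to be $i\colon X\to Y$ and pull it back along $\hat f$. The fibre product $X\times_Y\bigl(D_{X_\bullet}Y\smallsetminus (X\times\bA^1)\bigr)$ is a nilpotent thickening of $C_{I_\bullet}\smallsetminus\operatorname{Im}(\sigma_{I_\bullet})$; this is Remark~\ref{rem:fiber-prod-vs-exceptional} one level up. Since $\Kgroup$ does not see non-reduced structure (\S\ref{sec:excision}), pushing forward from the thickening agrees with pushing forward from the reduced space. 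Commuting $\hat f^!$ with $i_*$ then gives $\hat f^!i_*=j_*\hat f^!$. Finally, I would descend along the $\Gm$-quotient, using the identification $\Kgroup^{\GL_n\times\Gm}\simeq\Kgroup$ of the quotient stack, to get $f^*i_*(\alpha)=j_*f^!(\alpha)$. The point to check is that $f^*$ on $\tcY$ really does correspond to $\hat f^!$ composed with the inclusion $\Kgroup(\cY)\to\Kgroup(\cY\times B\Gm)$, $\beta\mapsto\beta\otimes 1$. This holds because $\hat f$ is the quotient map (which is flat) composed with $f$.

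For \ref{point: b}, I would use the projection formula \eqref{eqn: Projection Formula}. Since $\cY$ and $\tcY$ are smooth, we have $\Kgroup=\Kring$, so $f_*f^*\beta=f_*(f^*\beta\cdot 1)=\beta\cdot f_*(1)$. By Corollary~\ref{cor:push-forward-of-1}, $f_*(1)=1$, and the claim follows.

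Part \ref{point: c} is the main obstacle. I would first use Theorem~\ref{prop: twisted weighted projection formula} to write $\Kring(\tcX)=\Kring(\cX)[t^{\pm1}]/(p(t^{-1}))$. The next step is to extract from Lemma~\ref{lem:cohomology-proj-bundle} that $\Kring(\tcX)$ is generated as a $\Kring(\cX)$-module by $1,t^{-1},\dots,t^{-(w-1)}$. Indeed, $p(t^{-1})$ has constant term $b_0=1$ and its top coefficient $b_w$ is a unit (a power of the determinant-type line bundle classes), so every power of $t$ can be reduced into this range. Given $\widetilde\alpha=\sum_{k=0}^{w-1}a_kt^{-k}$, I would then compute $j^*j_*\widetilde\alpha=(1-t)\widetilde\alpha$, using the self-intersection formula (Proposition~\ref{prop: self intersection formula}) applied to the Cartier divisor $\tcX$, whose conormal class is $t$. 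The hypothesis $j_*\widetilde\alpha=0$ then gives $(1-t)\widetilde\alpha=0$.

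The key step is to show that $(1-t)\widetilde\alpha=0$ forces $\widetilde\alpha\in\Kring(\cX)\cdot\gamma$. For this I would work in $\Kring(\cX)[t^{\pm1}]$: if $(1-t)q$ lies in $(p(t^{-1}))$, then after evaluating at $t=1$ one gets $q\equiv c\,(p(t^{-1})-p(1))/(t-1)$ modulo $p(t^{-1})$ for some $c\in\Kring(\cX)$. I expect to argue this by polynomial division, possibly after passing to the splitting cover of Propositions~\ref{prop:classical-splitting-principle} and~\ref{lemma: BT-toBGwn}, so that the non-zero-divisor argument of Lemma~\ref{lemma: T-equivariant case} applies. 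By Proposition~\ref{prop:existence of gamma} we then have $\widetilde\alpha=g^*(c)\gamma=f^!(c)$. Corollary~\ref{cor: push-forward-upper-shriek} gives $g_*\widetilde\alpha=c$, and the hypothesis $g_*\widetilde\alpha=0$ forces $c=0$, hence $\widetilde\alpha=0$.

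The delicate point is that the evaluation-at-$t=1$ division argument needs $p(1)$ to be a non-zero-divisor, or at least needs the torsion bookkeeping over $\Kring(\cX)$ to work out. If that fails, I would instead compare directly with the injectivity proved for the local model and transport it using the splitting principle.
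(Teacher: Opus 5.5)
Parts (a) and (b) follow the paper's argument. For (a) you use Proposition~\ref{prop: properties pullbacks}\eqref{point 1: prop: properties pullbacks} through the factorization of Lemma~\ref{lem:lci-factorization}. For (b) you use the projection formula together with Corollary~\ref{cor:push-forward-of-1}. Your remarks on the non-reduced fibre product, and on identifying $f^*$ with $\hat f^!$ precomposed with $\beta\mapsto\beta\otimes1$, are the right bookkeeping.

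For (c) you take a different route from the paper. The paper uses $g_*\widetilde\alpha=0$ first. In the unique expansion $\widetilde\alpha=\sum_{i=0}^{w-1}a_it^{-i}$, Lemma~\ref{lem:cohomology-proj-bundle} gives $g_*\widetilde\alpha=a_0$, so $a_0=0$. Then $(1-t)\widetilde\alpha=\sum_{i=0}^{w-2}(a_i-a_{i+1})t^{-i}+a_{w-1}t^{-(w-1)}=0$ is a triangular system that kills every $a_i$. You instead use $j_*\widetilde\alpha=0$ first, place the annihilator of $1-t$ inside $\Kring(\cX)\cdot\gamma$, and finish with $g_*f^!=\Id$. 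Your route needs more input, namely the explicit formula for $\gamma$ and Corollary~\ref{cor: push-forward-upper-shriek}. In exchange it proves more: it shows $\ker j_*\subseteq f^!(\Kring(\cX))$, which is essentially exactness in the middle of the blowup sequence.

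The key step, however, is only announced, and your worry about it is misplaced. Here is the argument. Suppose $(1-t)q=p(t^{-1})h$ in $\Kring(\cX)[t^{\pm1}]$. Evaluating at $t=1$ gives $p(1)h(1)=0$. Write $h=h(1)+(t-1)h_1$ and $p(t^{-1})=p(1)+(t-1)\gamma$. Substituting, one gets $(1-t)\bigl(q+h(1)\gamma+h_1p(t^{-1})\bigr)=0$. Since $t-1$ is monic, it is a non-zero-divisor in $\Kring(\cX)[t^{\pm1}]$, so $q\equiv -h(1)\gamma$ modulo $p(t^{-1})$. No hypothesis on $p(1)$ is needed.

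In fact this shows that the annihilator of $1-t$ in $\Kring(\tcX)$ is exactly $\{g^*(c)\gamma : c\in\Kring(\cX),\ c\,p(1)=0\}$. So your fallback could not work, namely passing to a splitting cover and invoking the non-zero-divisor argument of Lemma~\ref{lemma: T-equivariant case}. In general $1-t$ genuinely is a zero-divisor on $\Kring(\tcX)$. For example, for $\overline{\cM}_{1,2}$ the center is a point with $p(1)=0$, so $(1-t)\gamma=0$ even though $g_*\gamma=1$. Injectivity of pullbacks cannot carry non-zero-divisor statements down from the universal model $[\bA^d/\Gm^d]$. This is also why the hypothesis $g_*\widetilde\alpha=0$ cannot be dropped in (c).
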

\begin{proof}
(a) This follows from Proposition~\ref{prop: properties pullbacks}\eqref{point 1: prop: properties pullbacks}, as the blowup factors as the composition of a regular closed embedding and a flat morphism by Lemma~\ref{lem:lci-factorization}.

(b) The equality is an immediate consequence of the projection formula in equation~\eqref{eqn: Projection Formula} and of Corollary~\ref{cor:push-forward-of-1}.

(c) Recall that $p(t^{-1})=\sum_{i=0}^w b_i t^{-i}$ is zero in $\Kring(\widetilde{\cX})$. In particular, every element in $\Kring(\widetilde{\cX})$ can be represented as a polynomial in $t^{-1}$. Moreover, note that $b_w = \prod_i \det([E_i^\vee])$ is invertible. Here $E$ is the weighted conormal cone of $\cX \subseteq \cY$, and the $E_i$ are the vector bundles associated to it in Remark~\ref{rmk: mistake in Arena-Obinna}. In particular, we can also bound the degree in $t^{-1}$ of the polynomials by $w-1$, and write $\widetilde \alpha = \sum_{i=0}^{w-1} a_it^{-i}$. Such representation is indeed unique.
By applying $g_*$ we obtain $$g_*\widetilde \alpha=a_0=0.$$ By Theorem~\ref{prop: self intersection formula}, and because $a_0=0$,
$$
0=j^*j_*\widetilde \alpha = (1-t)\widetilde \alpha = \sum_{i=0}^{w-2}(a_i-a_{i+1})t^{-i}+a_{w-1}t^{-(w-1)}.$$
By uniqueness of this expression, we have $a_i=0$ for all $i$, thus $\widetilde\alpha=0$.
\end{proof}

One upside of working in the K-theoretic setting is that the pushforward of coherent sheaves along proper morphisms is defined also in the non-representable case, in contrast of what happens for the proper pushforward of integral Chow-theoretic cycles, see for instance \cite[Example 6.2]{Arena-Obinna}. This ensures that the blowup exact sequence is split exact, as the following corollary shows. 

\begin{Cor}\label{cor: main exact sequence}
    In the setting of \S\ref{sec: notation-resolution}, there exists a split short exact sequence
    $$ 
  \begin{tikzcd}
0 \arrow[r] & \Kgroup(\cX) \arrow[r, "{(f^!, i_*)}"] & \Kgroup(\widetilde{\cX})\oplus \Kgroup(\cY)  \arrow[r, "-j_*+f^*"] & \Kgroup(\widetilde{\cY}) \arrow[r] & 0
\end{tikzcd}
    $$
    where a left inverse of the first map is given by $(\widetilde{\alpha},\beta)\mapsto g_*(\widetilde{\alpha})$.
\end{Cor}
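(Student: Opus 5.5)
We check exactness at each of the three spots and the splitting separately, using Proposition~\ref{prop:fulton}, Corollary~\ref{cor: push-forward-upper-shriek} and the localization sequence of \S\ref{sec:excision}. All groups are $\GL_n$- (or $\GL_n\times\Gm$-) equivariant $K$-groups of the presentations from \S\ref{sec: notation-resolution}, so these earlier results apply verbatim.

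\textbf{Injectivity and splitting.} Corollary~\ref{cor: push-forward-upper-shriek} gives $g_*f^!(\alpha)=\alpha$ for every $\alpha\in\Kgroup(\cX)$. Hence $(\widetilde\alpha,\beta)\mapsto g_*\widetilde\alpha$ is a left inverse of $(f^!,i_*)$. This gives injectivity of the first map and, once exactness is known, the splitting.

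\textbf{Composition is zero.} For $\alpha\in\Kgroup(\cX)$ we have
\[
(-j_*+f^*)(f^!\alpha,i_*\alpha)=-j_*f^!\alpha+f^*i_*\alpha=0
\]
by Proposition~\ref{prop:fulton}\ref{point: a}.

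\textbf{Surjectivity.} Since $f$ restricts to an isomorphism $\widetilde\cY\smallsetminus\widetilde\cX\simeq \cY\smallsetminus\cX$, we use the localization sequences for $\widetilde\cX\subset\widetilde\cY$ and $\cX\subset\cY$. Take $\xi\in\Kgroup(\widetilde\cY)$. Its restriction to the complement equals the restriction of $f^*\beta$ for some $\beta\in\Kgroup(\cY)$, by surjectivity of $\Kgroup(\cY)\to\Kgroup(\cY\smallsetminus\cX)$ and compatibility of flat pullback with restriction to opens. Then $\xi-f^*\beta$ restricts to zero, so it equals $j_*\widetilde\alpha'$ for some $\widetilde\alpha'$. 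Here we use Remark~\ref{rem:fiber-prod-vs-exceptional} together with the fact from \S\ref{sec:excision} that $K$-theory ignores the nonreduced structure, so pushforward from $\widetilde\cX$ is all we need. Hence $\xi=-j_*(-\widetilde\alpha')+f^*\beta$.

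\textbf{Exactness in the middle.} This is the main step. Suppose $j_*\widetilde\alpha=f^*\beta$.

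First push forward by $f$. By Proposition~\ref{prop:fulton}\ref{point: b} we get $\beta=f_*j_*\widetilde\alpha=i_*g_*\widetilde\alpha$. Set $\alpha:=g_*\widetilde\alpha$, so that $\beta=i_*\alpha$.

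Next consider $\widetilde\alpha-f^!\alpha$. It satisfies
\[
g_*(\widetilde\alpha-f^!\alpha)=\alpha-\alpha=0
\]
by Corollary~\ref{cor: push-forward-upper-shriek}. It also satisfies
\[
j_*(\widetilde\alpha-f^!\alpha)=f^*\beta-f^*i_*\alpha=0
\]
by Proposition~\ref{prop:fulton}\ref{point: a}. Proposition~\ref{prop:fulton}\ref{point: c} then gives $\widetilde\alpha=f^!\alpha$, so $(\widetilde\alpha,\beta)=(f^!\alpha,i_*\alpha)$ lies in the image of the first map.

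The main point to check is that $f_*\circ j_*=i_*\circ g_*$, which holds because $f\circ j=i\circ g$ and pushforward is functorial. Proper pushforward is available for these non-representable maps via the equivariant presentations; this is exactly the advantage of $K$-theory noted before the statement.
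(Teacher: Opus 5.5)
Your proof is correct and follows the paper's argument essentially step for step. The left inverse, and hence injectivity and the splitting, come from Corollary~\ref{cor: push-forward-upper-shriek}. Exactness in the middle is obtained exactly as in the paper, by combining Proposition~\ref{prop:fulton}\ref{point: b}, \ref{point: a} and \ref{point: c}, and surjectivity comes from the localization sequence.

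The only cosmetic difference is in the surjectivity step. Write $u\colon \cY\smallsetminus\cX\hookrightarrow\cY$ and $v\colon \widetilde{\cY}\smallsetminus\widetilde{\cX}\hookrightarrow\widetilde{\cY}$ for the open inclusions, and $f^{\circ}$ for the restricted isomorphism. The paper takes $\beta=f_*\xi$ and uses $u^*f_*=f^{\circ}_*v^*$, whereas you take an arbitrary lift of $(f^{\circ})_*v^*\xi$. Also, $f$ is not flat, so the identity $v^*f^*=(f^{\circ})^*u^*$ you rely on is functoriality of pullback on $\Kring$ of smooth stacks, not flat pullback.
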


\begin{proof}
    We start by proving the surjectivity of the right-hand side morphism. Let $\widetilde{\beta} \in \Kgroup(\widetilde{\cY})$, and consider the cartesian diagram
    $$
    \begin{tikzcd}
    \widetilde{\cY}\setminus \widetilde{\cX} \arrow[r, "v", hook] \arrow[d, "f^{\circ}"] & \widetilde{\cY} \arrow[d, "f"] \\
    \cY \setminus \cX \arrow[r, "u", hook]                                       & \cY                       
    \end{tikzcd}
    $$
    where $f^{\circ}$ is an isomorphism. By Proposition~\ref{prop: properties pullbacks}, we have that $u^{*}f_*=f^{\circ}_*v^*$. Thus, if we denote by $\widetilde{\beta}'$ the element $f^*f_*\widetilde{\beta}-\widetilde{\beta}$, we have that 
    $$ v^*\widetilde{\beta}'= v^*f^*f_*\widetilde{\beta}-v^*\widetilde{\beta}= f^{\circ,*}u^*f_*\widetilde{\beta}-v^*\widetilde{\beta}=f^{\circ,*}f^{\circ}_*v^*\widetilde{\beta}-v^*\widetilde{\beta}=0$$
    since $f^{\circ}$ is an isomorphism. By the localization sequence in $K$-theory, we get that $\widetilde{\beta}'=j_*(\widetilde{\alpha})$ for some $\alpha \in \Kgroup(\widetilde{\cX})$ and thus we have that $\widetilde{\beta}=f^{*}f_*\widetilde{\beta}-j_*{\widetilde{\alpha}}$ and we get the surjectivity.

    We will now show the exactness in the middle. Given $\alpha\in \Kgroup(\cX)$, we have that $f^*i_*(\alpha)=j_*f^!(\alpha)$ thanks to Proposition~\ref{prop:fulton}\ref{point: a}, thus $\im(f^!,i_*)\subset \ker(-j_*+f^*)$. Moreover, suppose we are given $(\widetilde{\alpha},\beta) \in \Kgroup(\widetilde{\cX})\oplus \Kgroup(\cY)$ such that $j_*\widetilde{\alpha}=f^*\beta$. Part \ref{point: b} of Proposition~\ref{prop:fulton} tells us that $\beta=f_*j_*(\widetilde{\alpha})=i_*g_*(\widetilde\alpha)$, therefore it remains to show that $f^!g_*(\widetilde\alpha)=\widetilde\alpha$. Note that
    \begin{itemize}
        \item $g_*\left(f^!g_*(\widetilde\alpha)-\widetilde\alpha\right)=0$ because of Corollary~\ref{cor: push-forward-upper-shriek};
        \item $j_*\left(f^!g_*(\widetilde\alpha)-\widetilde\alpha\right)=f^*i_*g_*(\widetilde\alpha)-j_*\widetilde\alpha=f^*\beta-j_*\widetilde\alpha=0$ because of Proposition~\ref{prop:fulton}\ref{point: a}.
    \end{itemize}
    Hence, Proposition~\ref{prop:fulton}\ref{point: c} gives us that $f^!g_*(\widetilde\alpha)=\widetilde\alpha$ and thus $\im(f^!,i_*)= \ker(-j_*+f^*)$.

    Finally, the remaining statements follow from Corollary~\ref{cor: push-forward-upper-shriek}.
\end{proof}

\subsection{Proof of Theorem~\ref{thm: main theorem}}\label{sec:proof-main-theorem}

The short exact sequence in Corollary~\ref{cor: main exact sequence} provides a presentation of the group $\Kgroup(\widetilde \cY)$. We are only left to describe how its elements multiply. 
For example, for $q(t,t^{-1}), q'(t,t^{-1})\in \Kring(\widetilde \cX)$, by the projection and self intersection formulas, we have
\begin{align*}
    (q(t,t^{-1}),0) \cdot (q'(t,t^{-1}),0) & = j_*(q(t,t^{-1}))\cdot j_*(q'(t,t^{-1}))\\ 
    & = j_*(q(t,t^{-1})\cdot j^*j_*(q'(t,t^{-1}))\\
    & = j_*(q(t,t^{-1}) \cdot q'(t,t^{-1})\cdot (1-t))\\
    & = (q(t,t^{-1})\cdot q'(t,t^{-1})\cdot (1-t),0).
\end{align*}
The others are similar.\qed

\subsection{Keel's Formula}\label{sub:Keel-formula}

When the restriction morphism $i^*:\Kring(\cY)\rightarrow\Kring(\cX)$ is surjective, the presentation of $\Kring(\widetilde{\cY})$ in Theorem~\ref{thm: main theorem} simplifies notably, as we will prove below. For classical blowups, the Chow-theoretic version is a theorem of Keel (\cite[Theorem 1, Appendix, page 571]{Keel}), later generalized in~\cite[Corollary 6.5]{Arena-Obinna} to weighted blowups.

Before stating and proving the K-theoretic weighted Keel formula, let us set up some notation.

Recall that for $\cX$ and $\cY$ as in~\S\ref{sec: notation-resolution}, the restriction of $\sO_{\tcY}(1)\simeq\sO_{\tcY}(-\tcX)$ to $\tcX$ is $\sO_{\tcX}(1)$ on $\tcX$. Now, suppose the restriction $i^*:\Kring(\cY)\rightarrow\Kring(\cX)$ to be surjective. This map can be naturally extended to a surjective morphism of rings $i_s^*:\Kring(\cY)[s^{\pm1}]\rightarrow\Kring(\cX)[t^{\pm1}]$, sending $s$ to $t$. Recall that $p(t^{-1})=\lambda_{-1}^{\Gm}(\sN_{\cX/\cY}^{\vee})$, and $p(1)=\lambda_{-1}(\sN_{\cX/\cY}^{\vee}) \in \Kring(\cX)$.

\begin{Lemma}\label{lem: uniqueness r(s)}
    There is a well-defined homomorphism of groups
    \[
        \tau:\Kring(\cX)[t^{\pm1}]\longrightarrow\frac{\Kring(\cY)[s^{\pm1}]}{((1-s)\cdot\ker i^*)}
    \]
    defined by sending $\alpha\in\Kring(\cX)[t^{\pm1}]$ to $\beta\cdot(1-s)$ for any choice of $\beta\in\Kring(\cY)$ such that $i^*\beta=\alpha$.
    In particular,
    \[
    r(s)=\tau\left(\frac{p(t^{-1})-p(1)}{1-t}\right),
    \]
    is the unique element of $\Kring(\cY)[s^{\pm1}]/((1-s)\cdot\ker i^*)$ divisible by $1-s$ and such that $i_s^*r(s)=p(t^{-1})-p(1)$.
\end{Lemma}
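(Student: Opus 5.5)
The plan is to read $\tau$ coefficientwise, check that the ambiguity in lifting lands exactly in the subgroup we quotient by, and then get the statement about $r(s)$ from the fact that $1-t$ is a non-zero-divisor in a Laurent polynomial ring.

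\emph{Definition and well-definedness of $\tau$.} Write $\alpha=\sum_k \alpha_k t^k\in\Kring(\cX)[t^{\pm1}]$, a finite sum. Since $i^*$ is surjective, each $\alpha_k$ has a lift $\beta_k\in\Kring(\cY)$. Set $\beta=\sum_k\beta_k s^k$, so that $i_s^*\beta=\alpha$, and put $\tau(\alpha)=(1-s)\beta$.

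Suppose $\beta'=\sum_k\beta'_k s^k$ is another lift. Then every coefficient $\beta_k-\beta'_k$ lies in $\ker i^*$. Hence
\[
(1-s)(\beta-\beta')=\sum_k (1-s)(\beta_k-\beta'_k)s^k ,
\]
and this lies in the ideal generated by $(1-s)\cdot\ker i^*$ in $\Kring(\cY)[s^{\pm1}]$, which is the denominator. Additivity is immediate, since a sum of lifts is a lift of the sum. So $\tau$ is a well-defined group homomorphism.

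We also record one identity. The map $i_s^*$ kills $(1-s)\cdot\ker i^*$, because it is a ring map sending $\ker i^*$ to $0$. It therefore descends to the quotient, and there
\[
i_s^*\tau(\alpha)=(1-t)\alpha .
\]

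\emph{$r(s)$ makes sense.} We need $\frac{p(t^{-1})-p(1)}{1-t}$ to be a genuine element of $\Kring(\cX)[t^{\pm1}]$. Write $p(t^{-1})=\sum_{i=0}^w b_it^{-i}$. Then $p(t^{-1})-p(1)=\sum_i b_i(t^{-i}-1)$, and each $t^{-i}-1$ is divisible by $1-t$ in $\Z[t^{\pm1}]$. This is the same computation as in the proof of Corollary~\ref{cor: push-forward-upper-shriek}. The quotient is unique because $1-t$ is a non-zero-divisor in $\Kring(\cX)[t^{\pm1}]$: it is monic up to a unit, so multiplying by it shifts the top coefficient of a nonzero Laurent polynomial and cannot give zero.

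Consequently $r(s)$ is divisible by $1-s$ by construction, and by the identity above
\[
i_s^*r(s)=(1-t)\cdot\frac{p(t^{-1})-p(1)}{1-t}=p(t^{-1})-p(1).
\]

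\emph{Uniqueness.} Let $r'$ be a class in the quotient that is divisible by $1-s$, say $r'=(1-s)\beta'$, with $i_s^*r'=p(t^{-1})-p(1)$. Then
\[
(1-t)\,i_s^*\beta'=p(t^{-1})-p(1).
\]
Cancelling the non-zero-divisor $1-t$ gives $i_s^*\beta'=\frac{p(t^{-1})-p(1)}{1-t}$. Thus $\beta'$ is a lift of this element, and by the definition and well-definedness of $\tau$ we get $r'=\tau(\cdots)=r(s)$.

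\emph{Expected obstacle.} The only real subtlety is interpretive: "divisible by $1-s$" must mean having a representative of the form $(1-s)\beta'$ with $\beta'\in\Kring(\cY)[s^{\pm1}]$. With that reading, uniqueness reduces to the cancellation of $1-t$ in $\Kring(\cX)[t^{\pm1}]$ and to the well-definedness of $\tau$.
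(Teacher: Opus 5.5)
Your proposal is correct and is essentially the paper's argument, just written out in more detail. The paper calls the well-definedness of $\tau$ obvious, which is your coefficientwise lifting argument. It derives uniqueness from the implication $i_s^*((1-s)a)=(1-t)b \Rightarrow i_s^*(a)=b$, which is exactly your cancellation of the non-zero-divisor $1-t$ in $\Kring(\cX)[t^{\pm1}]$.
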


\begin{proof}
    The first part is obvious. The second part follows from the fact that if $a \in \Kring(\cY)[s^{\pm1}]$ is such that $i_s^*((1-s) a)=(1-t)b$, then $i_s^*(a)=b$.
\end{proof}

    We set
    \begin{equation}\label{eq: Q(s)}
        Q(s)=r(s)+i_*(1)\in\frac{\Kring(\cY)[s^{\pm1}]}{((1-s)\cdot\ker i^*)}.
    \end{equation}
    Note that $i_s^*$ induces an isomorphism
    \begin{equation}\label{eq: isom Xtilde Keel formula}
        \Kring(\tcX)\simeq\frac{\Kring(\cX)[t^{\pm1}]}{(p(t^{-1}))}\xleftarrow[\psi]{\simeq}\frac{\Kring(\cY)[s^{\pm1}]}{(\ker i^*,Q(s))}
    \end{equation}
    as the image of $i_s^*(Q(s))$ in $\Kring(\tcX)\simeq\Kring(\cX)[t^{\pm1}]/(p(t^{-1}))$ is $p(t^{-1})-p(1)+i^*i_*(1)=p(t^{-1})$, by the self-intersection formula.
    
\begin{Corollary}[Keel Formula]\label{cor: keel formula}
    Let $\cX$ and $\cY$ as in \S\ref{sec: notation-resolution}, and suppose the restriction $i^*:\Kring(\cY)\rightarrow\Kring(\cX)$ to be surjective. With the notation above, the morphism
    \[
    \begin{tikzcd}
        \Phi:\Kring(\cY)[s^{\pm1}]\arrow[r] & \Kring(\tcY), & s\mapsto {[\sO_{\tcY}(1)]},
    \end{tikzcd}
    \]
    induced by the pullback along $f$ is surjective, and yields an isomorphism
    \[
        \Kring(\tcY)\simeq\frac{\Kring(\cY)[s^{\pm1}]}{((1-s)\cdot\ker i^*,Q(s))}.
    \]
\end{Corollary}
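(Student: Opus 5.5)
The plan is to derive the presentation directly from Theorem~\ref{thm: main theorem}, using the surjectivity of $i^*$ to absorb the $\Kring(\cX)[t^{\pm1}]$ summand into $\Kring(\cY)[s^{\pm1}]$.

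First I check that $\Phi$ is well defined and surjective. It is the ring map extending $f^*$ and sending $s\mapsto[\sO_{\tcY}(1)]$. In the notation of Theorem~\ref{thm: main theorem}, an element $(q,0)$ equals $j_*(q)$. Since $[\sO_{\tcY}(1)]=[\sO_{\tcY}(-\tcX)]$, we have $1-s\mapsto 1-[\sO(-\tcX)]=j_*(1)$, and this corresponds to $(1,0)$. For $\beta\in\Kring(\cY)$ the projection formula gives
\[
\Phi(\beta s^k(1-s))=j_*(i^*\beta\, t^k)=(i^*\beta\,t^k,0).
\]
Because $i^*$ is surjective, every $(q,0)$ with $q\in\Kring(\cX)[t^{\pm1}]$ lies in the image; indeed the image of $(1-s)\Kring(\cY)[s^{\pm1}]$ is exactly $\{(q,0)\}$. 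The elements $(0,\beta)=f^*\beta$ are obviously hit, so $\Phi$ is onto.

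Next I identify the kernel.
\begin{itemize}
\item For $\kappa\in\ker i^*$, $\Phi((1-s)\kappa)=(i^*\kappa,0)=0$.
\item For $Q(s)=r(s)+i_*(1)$, write $r(s)=(1-s)\beta(s)$ with $i_s^*\beta=(p(t^{-1})-p(1))/(1-t)=-\gamma$; this is possible by Lemma~\ref{lem: uniqueness r(s)}. Then $\Phi(Q)=(-\gamma,0)+(0,i_*1)$, which is exactly the first family of relations in Theorem~\ref{thm: main theorem} with $\alpha=1$, up to sign, and so vanishes.
\end{itemize}
Hence $\Phi$ factors through $\Kring(\cY)[s^{\pm1}]/((1-s)\ker i^*,Q(s))$.

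For injectivity I build an inverse $\Psi$ on the presentation of Theorem~\ref{thm: main theorem}. Set $\Psi(0,\beta)=\beta$, and $\Psi(q,0)=\tau(q)$ with $\tau$ as in Lemma~\ref{lem: uniqueness r(s)}. This is well defined modulo $(1-s)\ker i^*$. I then check that $\Psi$ kills the relations.
\begin{itemize}
\item For $(\gamma\alpha,i_*\alpha)$, pick $\beta_\alpha$ with $i^*\beta_\alpha=\alpha$. Then $i_*\alpha=i_*i^*\beta_\alpha=\beta_\alpha i_*(1)$ by the projection formula, while $\tau(\gamma\alpha)=\beta_\alpha\,\tau(\gamma)=-\beta_\alpha r(s)$ (this uses that $\tau$ is $\Kring(\cY)$-linear). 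The sum is $\beta_\alpha(i_*1-r(s))$, and I must reconcile the sign with $Q=r+i_*1$. I expect sign conventions for $\gamma$ versus $r$ to need care, since $\gamma=(p(t^{-1})-p(1))/(t-1)$ and the sequence uses $-j_*+f^*$; the relation becomes $(\gamma\alpha,-i_*\alpha)$-type once the signs are tracked, and it then maps to $-\beta_\alpha Q(s)$.
\item For $(p(t^{-1})\,q,0)$, we get $\tau(p(t^{-1})q)=(1-s)\beta_q\,\tilde p(s)$, where $\tilde p(s)$ lifts $p$. Now $(1-s)\tilde p=(1-s)\bigl(r(s)/(1-s)\cdot\ldots\bigr)$; more cleanly, $(1-s)\tilde p(s^{-1})\equiv(1-s)\bigl(Q(s)-i_*1+\tilde p(1)\bigr)$. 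Using $i^*i_*1=p(1)$, the element $\tilde p(1)-i_*1$ lies in $\ker i^*$, so this is $\equiv(1-s)Q(s)$ modulo $(1-s)\ker i^*$, i.e. zero.
\end{itemize}

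Finally I verify that $\Psi\circ\Phi=\mathrm{id}$ on generators $\beta s^k$, writing $s^k=1-(1-s)(\ldots)$, and that $\Phi\circ\Psi=\mathrm{id}$ on $(q,0)$ and $(0,\beta)$, both of which are immediate from the formulas above. Multiplicativity of $\Phi$ holds because it is built from ring maps. The main obstacle I expect is the bookkeeping in the $p(t^{-1})$-relation and the signs in the $\gamma$-relation, in particular showing that $(1-s)\cdot(\text{lift of }p)$ reduces to a multiple of $Q(s)$ modulo $(1-s)\ker i^*$. That reduction relies on the self-intersection formula $i^*i_*1=p(1)$ and on the isomorphism~\eqref{eq: isom Xtilde Keel formula}.
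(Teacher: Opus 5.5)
Your argument is correct and is essentially the paper's proof. You get surjectivity from the blowup exact sequence via $\Phi(\beta s^k(1-s))=j_*(i^*\beta\,t^k)$, and the containment $((1-s)\ker i^*,Q(s))\subset\ker\Phi$ via $j_*f^!(1)=f^*i_*(1)$. Your inverse built from $\tau$ is the paper's map $\varphi$, namely multiplication by $-(1-s)$ on $\Kring(\tcX)$ through the isomorphism $\psi$, and your check of the $p(t^{-1})$-relation is exactly the computation behind $\psi$. Your sign resolution is also right: with the convention $(q,0)=j_*q$ the first family of relations is $(\gamma\alpha,-i_*\alpha)$, which $\Psi$ sends to $-\beta_\alpha Q(s)=0$, whereas the paper uses $-j_*+f^*$ and so carries the opposite sign on the exceptional summand.
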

\begin{proof}
    Throughout the whole proof, we will identify $s$ with its image $\Phi(s)=[\sO_{\tcY}(1)]$.
    First, we prove that the morphism $\Phi$ is surjective, using Corollary~\ref{cor: main exact sequence}. Let $\widetilde{x}=\sum_i g^*(x_i)t^i\in\Kring(\tcX)$, and $y,y_i\in\Kring(\cY)$ with $i^*y_i=x_i$. Then,
    \begin{align*}
        -j_*(\widetilde{x})+f^*y &= -\sum_i j_*(g^*i^*(y_i)\cdot j^*(s^i))+f^*y\\
        &= -\sum_i f^*(y_i)s^ij_*(1)+f^*y\\
        &= -\sum_i f^*(y_i)s^i(1-s)+f^*y\in\operatorname{Im}(\Phi),
    \end{align*}
    where we have used that $j_*(1)=1-s$. Moreover, note that for every $y\in\ker i^*$
    \begin{align*}
        \Phi((1-s)y)=j_*(1)f^*(y)=j_*(g^*i^*(y))=0.
    \end{align*}
    It follows that $(1-s)\cdot\ker i^*\subset\ker\Phi$, hence we get a morphism
    \[
        R_1:=\frac{\Kring(\cY)[s^{\pm1}]}{((1-s)\cdot\ker i^*)}\longrightarrow\Kring(\tcY),
    \]
    that we still denote by $\Phi$. Let $Q(s)\in R_1$ be the polynomial defined in~\eqref{eq: Q(s)}, and let $q(s)\in R_1$ be any polynomial such that $i_s^*q(s)=\frac{p(t^{-1})-p(1)}{t-1}$. By uniqueness of $r(s)$ in Lemma~\ref{lem: uniqueness r(s)}, we have $Q(s)=q(s)(s-1)+i_*(1)$, thus
    \begin{align*}
        \Phi(Q(s))&=-(f^*q)(s)\cdot(1-s)+f^*i_*(1)\\
        &=-(f^*q)(s)\cdot j_*(1)+f^*i_*(1)\\
        &=-j_*j^*((f^*q)(s))+f^*i_*(1)\\
        &=-j_*f^!(1)+f^*i_*(1)=0.
    \end{align*}
    Here, we have abused notation and suggestively wrote $f^*(q)(s)$ for $\Phi(q(s))$.
    We have used Proposition~\ref{prop: f upper shriek} to compute $f^!(1)$, and the last expression vanishes by Proposition~\ref{prop:fulton}. It follows that $\Phi$ factors through
    \[
        R:=\frac{\Kring(\cY)[s^{\pm1}]}{((1-s)\cdot\ker i^*,Q(s))}\longrightarrow\Kring(\tcY),
    \]
    that we again denote by $\Phi$. To show this to be an isomorphism, we define a surjective group homomorphism $\varphi:\Kring(\tcX)\oplus\Kring(\cY)\rightarrow R$ fitting in a commutative diagram of abelian groups
    \[
    \begin{tikzcd}
        \Kring(\tcX)\oplus\Kring(\cY)\arrow[d,twoheadrightarrow,"\varphi"]\arrow[r,"-j_*+f^*"] & \Kring(\tcY)\\
        R\arrow[r] & R/\!\ker\Phi\arrow[u,"\simeq"]
    \end{tikzcd}
    \]
    analogous to that in~\cite[page 573]{Keel}.
    On $\Kring(\cY)$, we define $\varphi$ to be the composition of $\Kring(\cY)\hookrightarrow\Kring(\cY)[s^{\pm1}]\twoheadrightarrow R$. In terms of the isomorphism in Equation~\eqref{eq: isom Xtilde Keel formula}, on the summand $\Kring(\tcX)$, we define $\varphi$ as the multiplication by $-(1-s)$.
    The commutativity of the diagram and the surjectivity of $\varphi$ follow from a formal computation. To conclude the proof, it is then enough to show that the composition
    \[
    \begin{tikzcd}[column sep=large]
        \Kring(\cX)\arrow[r,"{(f^{!},i_*)}"] & \Kring(\tcX)\oplus\Kring(\cY)\arrow[r,"\varphi"] & R
    \end{tikzcd}
    \]
    is the zero-map. Let $x\in\Kring(\cX)$ and $y\in\Kring(\cY)$ such that $i^*y=x$. Then,
    \begin{align*}
        \varphi\circ(f^!,i_*)(x)&=\varphi(f^!(i^*y),i_*i^*y)\\
        &=\varphi(g^*i^*(y)f^!(1),y\cdot i_*(1))\\
        &=y\cdot(\varphi(f^!(1))+i_*(1))\\
        &=y\cdot Q(s)=0.
    \end{align*}
    In the last equality, we have used that $\varphi(f^!(1))$ is the image of $r(s)$ under the quotient map $R_1\rightarrow R$.
\end{proof}

\section{Applications}\label{sec:applications}

\subsection{K-theory of $\overline{\cM}_{1,2}$}\label{sub:K-theory-M-12}

As an application, we compute the K-theory of the stack $\overline{\cM}_{1,2}$ parametrizing stable genus 1 curves with 2 marked points. We will leverage the following beautiful description of $\overline{\cM}_{1,2}$ as a weighted blowup due to Inchiostro (\cite{Inchiostro}).

Consider the weighted projective stack $\cP(2,3,4)\simeq[(\A^3\setminus0)/\Gm]$ where the $\Gm$-action has weights $(2,3,4)$ on the coordinates $x_0,x_1,x_2$. Let $Z\subset\cP(2,3,4)$ be the closed subscheme defined by the equations $x_0^3-x_1^2=x_2=0$. This is a closed point with trivial stabilizer, in particular the normal bundle of $Z$ in $\cP(2,3,4)$ is trivial. See~\cite{Inchiostro} for an elegant modular interpretation of the following result.

\begin{Thm}[{\cite[Theorem 2.10]{Inchiostro}}]
    Suppose the base field $k$ is of characteristic not dividing 6. There is an isomorphism $\overline{\cM}_{1,2}\simeq Bl_{Z}^{(4,6)}(\cP(2,3,4))$, where $Bl_{Z}^{(4,6)}(\cP(2,3,4))$ is the weighted blowup of $\cP(2,3,4)$ along $Z$, with weights $(4,6)$ and exceptional divisor $\cE\simeq\cP(4,6)$.
\end{Thm}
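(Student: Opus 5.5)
The plan is to give a modular construction in two stages. First we realize $\cP(2,3,4)$ as a moduli stack of possibly cuspidal genus one curves with two marked points. Then we identify $\overline{\cM}_{1,2}$ with its weighted blowup through the universal property of weighted blowups, following \cite[Theorem 3.2.9]{quek-rydh-weighted-blowup}.

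\textbf{Normal form and the stack $\cP(2,3,4)$.} For a $2$-pointed genus $1$ curve $(C,p_1,p_2)$, we use the sections of $\sO_C(np_1)$ to write $C$ in Weierstrass form $y^2+a_1xy+a_3y=x^3+a_2x^2+a_4x+a_6$, with $p_1$ at infinity. Translating $x$ and $y$ so that $p_2=(0,0)$ forces $a_6=0$. Using the residual substitution $y\mapsto y+sx$ (together with $x\mapsto x$) we can kill $a_1$, at least away from characteristics $2$ and $3$. This leaves
$$y^2+a_3y=x^3+a_2x^2+a_4x,$$
and the scaling $(x,y)\mapsto(\lambda^2x,\lambda^3y)$ acts with weights $(2,3,4)$ on $(a_2,a_3,a_4)$.

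Families with $(a_2,a_3,a_4)\neq 0$ are exactly the reduced irreducible arithmetic genus $1$ curves with at worst a node or a cusp, with $p_1\neq p_2$ both smooth points. The excluded origin is the cuspidal cubic with $p_2$ equal to the cusp. This identifies $\cP(2,3,4)$ with this moduli problem, and the identification is checked on families through the standard relative Weierstrass argument. Under it, the locus $Z=\{x_0^3=x_1^2,\ x_2=0\}$ should correspond to the unique such curve that is cuspidal. We will confirm this by computing the singularities of the cubic.

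\textbf{The contraction $F\colon\overline{\cM}_{1,2}\to\cP(2,3,4)$.} On the universal stable curve, contract the rational component carrying $p_1,p_2$ whenever it meets an elliptic tail. Equivalently, replace elliptic tails by cusps, using the relative dualizing sheaf twisted by $p_1$ and $p_2$. This produces a pseudo-stable family and hence the morphism $F$. Away from $\Delta_1$ the map $F$ is an isomorphism onto $\cP(2,3,4)\smallsetminus Z$. The divisor $\Delta_1\simeq\overline{\cM}_{1,1}\times\overline{\cM}_{0,3}\simeq\cP(4,6)$ maps to $Z$, because every elliptic tail is replaced by the same cuspidal curve.

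\textbf{Comparison with the weighted blowup (main obstacle).} Next we compute, in an étale neighborhood of $Z$, local coordinates $(u,v)$ on $\cP(2,3,4)$ vanishing at $Z$. We need $u$ and $v$ to pull back under $F$ to $\delta^4\cdot(\text{unit or } g_4)$ and $\delta^6\cdot(\text{unit or } g_6)$ respectively. Here $\delta$ is a local equation of $\Delta_1$, and $(g_4,g_6)$ are the Weierstrass invariants of the elliptic tail, of weights $4$ and $6$.

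I expect this to be the hardest step. My first attempt would be a versal deformation computation of the cusp: the smoothing of a cusp into an elliptic tail has deformation parameters of weights $4$ and $6$. Concretely, I would check that $F^{-1}I_m\cdot\sO=\sO(-m\Delta_1)$ for the weighted filtration $I_m=(u^av^b:4a+6b\ge m)$.

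Once this holds, the filtration pulls back to the powers of a Cartier divisor. The universal property then yields a morphism $\overline{\cM}_{1,2}\to Bl_Z^{(4,6)}\cP(2,3,4)$ over $\cP(2,3,4)$. This morphism is proper and birational between smooth stacks. It restricts to an isomorphism $\Delta_1\simeq\cP(4,6)$ onto the exceptional divisor, and it is representable because stabilizers inject. It is also quasi-finite, so Zariski's main theorem implies it is an isomorphism.
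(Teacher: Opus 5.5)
The paper does not prove this statement; it is quoted from Inchiostro. So your proposal has to stand on its own. Its architecture is sound: a contraction $F\colon\overline{\cM}_{1,2}\to\cP(2,3,4)$ that is an isomorphism off $\Delta_1$, a check that the $(4,6)$-weighted ideal of $Z$ pulls back to multiples of $\Delta_1$, then the universal property and Zariski's main theorem. However, the construction of $F$ is wrong as written, and the decisive computation is missing.

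\textbf{First gap: the construction of $F$.} Your moduli description of $\cP(2,3,4)$ is false. The $\mu_2$-point $a_3=a_4=0$ is $y^2=x^3+a_2x^2$ with $p_2=(0,0)$ the \emph{node}. The correct description is ``Weierstrass cubic, $p_2\neq p_1$ any point other than a cusp''. This point is the image of the banana curve: two $\bP^1$'s meeting twice, one marking on each, automorphism group $\Z/2$. The banana curve lies in $\overline{\cM}_{1,2}\smallsetminus\Delta_1$, is pseudo-stable, and is not a Weierstrass cubic. So ``this produces a pseudo-stable family and hence the morphism $F$'' gives no map into the stack you described. Likewise, the claim that $F$ is an isomorphism onto $\cP(2,3,4)\smallsetminus Z$ off $\Delta_1$ is unsupported; you would still have to contract the $p_2$-component by $|3p_1|$, in families.

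Over $\Delta_1$ the recipe is also not right, for three reasons. Contracting the rational component carrying $p_1,p_2$ yields $(E,p_1=p_2)$, which varies with $E$; this is not the same as replacing the tail by a cusp. The sheaf $\omega_\pi(p_1+p_2)$ is ample on every stable $2$-pointed curve, so it contracts nothing. Finally, the Weierstrass construction cannot be run naively there, because $h^0(\sO_C(3p_1))=4$ on $\Delta_1$.

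\textbf{Second gap: the local computation.} It is only announced, and the criterion you state for it is false. If $F^*u=\delta^4G_4$ and $F^*v=\delta^6G_6$, then $F^{-1}I_1\cdot\sO=(\delta^4G_4,\delta^6G_6)\subseteq\sO(-4\Delta_1)$, so it is never $\sO(-\Delta_1)$. A morphism to the weighted blowup over $\cP(2,3,4)$ needs $F^{-1}I_m\cdot\sO\subseteq\sO(-m\Delta_1)$ for all $m$, with equality locally for some $m>0$. That amounts exactly to those two factorizations with $G_4,G_6$ never simultaneously zero on $\Delta_1$.

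\textbf{A repair for both gaps.} Use that $\overline{\cM}_{1,2}$ is the universal curve over $\overline{\cM}_{1,1}\simeq\cP(4,6)$. That is, $\overline{\cM}_{1,2}\simeq[\mathcal W/\Gm]$ with $\mathcal W\subset(\A^2\smallsetminus0)\times\bP^2$ cut out by $Y^2W=X^3+aXW^2+bW^3$, with weights $4,6$ on $a,b$ and $2,3,0$ on $X,Y,W$. Here $\Delta_1$ is the section $[0:1:0]$.

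On $W\neq0$, translate $p_2=(x,y)$ to the origin. This gives $(a_2,a_3,a_4)=(3x,2y,3x^2+a)$, a triangular automorphism of $\A^3$, under which $a=g_4:=a_4-a_2^2/3$ and $b=g_6:=a_3^2/4-a_2a_4/3+2a_2^3/27$. Hence $\overline{\cM}_{1,2}\smallsetminus\Delta_1\simeq\cP(2,3,4)\smallsetminus V(g_4,g_6)$, with the node going to $[1:0:0]$, and $[g_4:g_6]$ is the forgetful map. Moreover, $g_4,g_6$ cut out $Z$ transversally; after $(x_0,x_1,x_2)=(a_2/3,a_3/2,g_4)$ this is the paper's $Z$.

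Near $[0:1:0]$, the function $\xi=X/Y$ has weight $-1$ and cuts out $\Delta_1$, and $(\xi,a,b)$ are coordinates on $\mathcal W$. After the weighted rescaling by $\xi$, the map extends over $\Delta_1$, sends $\Delta_1$ to $Z$, and pulls $g_4,g_6$ back to $\xi^4a,\ \xi^6b$. This is literally the chart $x_i=s^{w_i}x_i'$ of $Bl^{(4,6)}_0\A^2$, with $s=\xi$ and $(x_1',x_2')=(a,b)$.
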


We are ready to compute its K-theory. For the Chow-theoretic version of this computation, see~\cite[Theorem 4.12]{Inchiostro} or~\cite[Proposition 6.7]{Arena-Obinna}.

\begin{proof}[Proof of Theorem~\ref{thm: K-theory M12bar}]
    First, by Theorem~\ref{prop: twisted weighted projection formula}, we have
    \[
        \Kring(\cP(2,3,4))\simeq\frac{\Z[u^{\pm1}]}{((1-u^{-2})(1-u^{-3})(1-u^{-4}))}
    \]
    where $u=[\sO(1)]$. Moreover, the restriction morphism $i^*:\Kring(\cP(2,3,4))\rightarrow\Kring(Z)\simeq\Z$ is the rank map, hence $i^*$ is surjective and has kernel generated by $1-u$. Therefore, by Corollary~\ref{cor: keel formula}, we have
    \[
        \Kring(\overline{\cM}_{1,2})\simeq\frac{\Kring(\cP(2,3,4))[s^{\pm1}]}{((1-s)\cdot\ker i^*,Q(s))}\simeq\frac{\Z[u^{\pm1},s^{\pm1}]}{((1-u^{-2})(1-u^{-3})(1-u^{-4}),(1-s)(1-u),Q(s))}.
    \]
    Recall that $Q(s)=r(s)+i_*(1)$, where $r(s)$ is a lift of $p(t^{-1})-p(1)\in\Kring(Z)[t^{\pm1}]=\Z[t^{\pm1}]$ along the restriction map $\Kring(\cP(2,3,4))[s^{\pm1}]\rightarrow\Kring(Z)[t^{\pm1}]$, $s\mapsto t$, and $p(t^{-1})=\lambda_{-1}^{\Gm}(\sN_{Z/\cP(2,3,4)}^{\vee})$. Now, $p(t^{-1})=(1-t^{-4})(1-t^{-6})$, and
    \[
        i_*(1)=[\sO_Z]=[\sO_{\{x_0^3-x_1^2=0\}}]\cdot[\sO_{\{x_2=0\}}]=(1-u^{-6})(1-u^{-4}).
    \]
    Therefore, we can take $Q(s)=(1-s^{-4})(1-s^{-6})+(1-u^{-4})(1-u^{-6})$, as $p(1)=0$.

    It remains to describe the generators $s$ and $u$ in geometric terms. By construction, the exceptional divisor corresponds to the boundary divisor $
    \Delta_1$, parametrizing curves with an elliptic tail (see \cite[Theorem 2.10]{Inchiostro}), which also coincides with the universal section of the universal curve morphism $\overline{\cM}_{1,2} \rightarrow \overline{\cM}_{1,1}$. Thus we have $s=[\sO(-\Delta_1)]$. 

    We claim instead that $u = \mathbb{E}(\Delta_1)$ is the Hodge bundle twisted by $\Delta_1$. To see this, we proceed as follows. Let $f \colon \overline{\mathcal{M}}_{1,2} \simeq \operatorname{Bl}_{Z}^{(4,6)}(\mathcal{P}(2,3,4)) \to \mathcal{P}(2,3,4)$ be the blowup map. Outside of $Z$, we have
\[
f^* \mathcal{O}_{\cP(2,3,4)}(1) \simeq \mathbb{E}.
\]
Indeed, the restriction of the forgetful map $\overline{\mathcal{M}}_{1,2} \to \overline{\mathcal{M}}_{1,1}$ to $\overline{\mathcal{M}}_{1,2} \setminus \Delta_1 \simeq \mathcal{P}(2,3,4) \setminus Z$ is a map
\[
\mathcal{P}(2,3,4) \setminus Z \to \mathcal{P}(4,6)
\]
obtained from an explicit $\mathbb{G}_m$-equivariant rational map $\mathbb{A}^2 \setminus \{0\} \to \mathbb{A}^3 \setminus \{0\}$ described in \cite[\S2.1]{Inchiostro}. In particular, the pullback of $\mathcal{O}_{\mathcal{P}(4,6)}(1)$ to $\mathcal{P}(2,3,4) \setminus Z$ is $\mathcal{O}_{\mathcal{P}(2,3,4)}(1)$. By Example~\ref{example M11}, the line bundle $\mathcal{O}_{\mathcal{P}(4,6)}(1)$ corresponds to the Hodge bundle on $\overline{\mathcal{M}}_{1,1}$ under the isomorphism $\mathcal{P}(4,6) \simeq \overline{\mathcal{M}}_{1,1}$, so its pullback to $\overline{\mathcal{M}}_{1,2} \setminus \Delta_1$ is also the Hodge bundle.

It follows that
\[
f^* \mathcal{O}_{\cP(2,3,4)}(1) \simeq \mathbb{E}(m \Delta_1)
\]
for some $m \in \mathbb{Z}$.

Identifying $\Delta_1$ with $\overline{\mathcal{M}}_{1,1}$ via the section $\overline{\mathcal{M}}_{1,1} \to \overline{\mathcal{M}}_{1,2}$, the restriction satisfies
\[
\mathbb{E}|_{\Delta_1} \simeq \mathcal{O}_{\mathcal{P}(4,6)}(1).
\]
Moreover,
\[
\mathcal{O}(\Delta_1)|_{\Delta_1} \simeq \mathcal{N}_{\Delta_1 / \overline{\mathcal{M}}_{1,2}} \simeq \psi_1^\vee \simeq \mathbb{E}^\vee \simeq \mathcal{O}_{\mathcal{P}(4,6)}(-1),
\]
where $\psi_1$ denotes the psi class on $\overline{\mathcal{M}}_{1,1}$. The isomorphism $\mathbb{E} \simeq \psi_1$ is given by the evaluation map at the first marking.

In conclusion, since $\Delta_1$ is contracted by $f$, we have
\[
\mathcal{O} \simeq f^* \mathcal{O}_{\cP(2,3,4)}(1)|_{\Delta_1} \simeq \mathbb{E}(m \Delta_1)|_{\Delta_1} \simeq \mathcal{O}_{\cP(2,4)}(1 - m),
\]
from which we deduce $m = 1$.
\end{proof}

\begin{Rmk}
    Contrarily to what happens for the Chow ring, the relation $(1-u^{-2})(1-u^{-3})(1-u^{-4})$ defining the K-theory of $\cP(2,3,4)$ is not redundant. A simple way to see this is to quotient $\Kring(\overline{\cM}_{1,2})$ by $1-s$. 
\end{Rmk}

\subsection{The Lambda Polynomial of the Tangent complex}\label{sub:lambda-poly}

Let $f:\widetilde{\cY} \to \cY$ be a regular weighted blowup centered at $\cX \subseteq \cY$ with non-necessarily distinct weights $\mathbf{w} \in \mathbb{N}^d$. In this section, we determine the Lambda polynomial $\lambda_z(\bT_{\widetilde{\cY}})$ of the tangent complex of $\widetilde{\cY}$ in terms of the blowup data.

\begin{Rmk}\label{rmk: tangent complex stacks}
    Recall that every Artin stack $\cZ$ has a well-defined associated cotangent complex $\bL_{\cZ}^\bullet \in D^b(\cZ)$ in the derived category of bounded complexes on $\cZ$. The tangent complex is by definition $\bT_{\cZ}:=(\bL_{\cZ}^\bullet)^\vee $. If $\cZ=[Z/G]$ is the quotient of a scheme $Z$ by an affine smooth group $G$ and $\pi: Z \to \cZ$ is the quotient map, then one has a short exact sequence 
    $$
        \pi^*\mathbb{L}_{\mathcal{Z}}^\bullet \to \mathbb{L}_Z^\bullet \to \mathbb{L}_{Z/\mathcal{Z}}^\bullet \xrightarrow{+1}
    $$
    where $\mathbb{L}_X^\bullet$ is the standard cotangent complex of the scheme $X$ and $\mathbb{L}_{X/\mathcal{Z}}^\bullet$ is the relative cotangent complex of the map $\pi$.  Because $Z \to [Z/G]$ is a $G$-torsor, the map is smooth and the relative cotangent complex is just the vector bundle generated by the differentials along the fibers of the group action. If we let $\mathfrak{g}$ denote the Lie algebra of $G$, the infinitesimal action of $G$ on $Z$ provides a natural homomorphism of vector bundles
    $$
        a: \mathfrak{g} \otimes \sO_Z \to T_Z
    $$
    whose image is $T_{Z/\cZ}$. Dually, we obtain isomorphisms
    $$
        \mathbb{L}_{Z/\mathcal{Z}}^\bullet \simeq \Omega^1_{Z/\mathcal{Z}} \simeq \mathfrak{g}^\vee \otimes \sO_Z.
    $$
    If furthermore $Z$ is smooth, $\mathbb{L}_Z^\bullet$ is quasi-isomorphic to the Kähler differentials $\Omega_Z^1$ situated in degree $0$. In this case, $\pi^*\mathbb{L}_{\mathcal{Z}}^\bullet$ has a simple representation as a $G$-equivariant two-term complex in degrees $0$ and $1$:
    $$
        \pi^*\mathbb{L}_{\mathcal{Z}}^\bullet \simeq [ \Omega_Z^1 \xrightarrow{d} \mathfrak{g}^\vee \otimes \sO_Z ].
    $$
\end{Rmk}

Recall also that the Lambda polynomial of a perfect complex on a smooth global quotient Artin stack $\cZ$ is defined as follows.

\begin{Def}
    Let $\sE$ be a perfect complex on $\cZ$. The Lambda polynomial of $\sE$ is
    $$
        \lambda_z(\sE) = \sum_{i = 0}^\infty [\lambda^i \sE] z^i \in K^0(\mathcal{Z})[\![ z ]\!].
    $$    
\end{Def}

Note that this is always invertible. By setting $z = -1$, one obtains the K-theoretic Euler class as defined in Definition~\ref{def:euler_class}.

Recall that we set 
$$Q_z(t) := \lambda_z(\sN_{\cX/\cY}) \in \Kring_{\mathbb{G}_m}(\cX)[\![ z ]\!]= \Kring(\cX \times B\Gm)[\![ z ]\!].$$

and defined
$$
q: \Kring(\cX)[t^{\pm 1} ] [\![ z ]\!] \oplus \Kring(\cY)[\![ z ]\!] \longrightarrow \Kring(\widetilde{\cY})[\![ z ]\!]
$$
as the quotient map induced by the presentation of $\Kring(\widetilde{\cY})$ in Theorem \ref{thm: main theorem}.

We are ready to prove Theorem~\ref{thm: Lambda_poly_tangent}.

\begin{proof}[Proof of Theorem~\ref{thm: Lambda_poly_tangent}]
    
First note that because the restriction of $(\lambda_z(\bT_{\widetilde{\cY}})/\lambda_z(\bT_{\cY}))-1$ to $\Kring(\widetilde{\cY} \smallsetminus \widetilde{\cX})[\![ z ]\!]$
vanishes, we can write 
\begin{equation}\label{eqn: def_alpha}
\frac{\lambda_z(\bT_{\widetilde{\cY}})}{\lambda_z(f^* \bT_{\cY})} =1+ z j_*(\alpha)
\end{equation}
for some $\alpha \in \Kring(\widetilde{\cX})[\![ z ]\!]$ to be determined. Moreover, the element $\alpha$ is functorial for smooth morphisms $\cY' \to \cY$ and closed embeddings $\cY' \to \cY$ that meet $\cX$ transversely, in the following sense. 

Given a Tor-independent cartesian diagram 
\begin{center}
\begin{tikzcd}
\widetilde{\cY}' \arrow[r, "\widetilde{h}"] \arrow[d, "f'"] & \widetilde{\cY} \arrow[d, "f"] \\
\cY' \arrow[r, "h"] & \cY
\end{tikzcd}
\end{center}
one has $h^* \bT_{f}=\bT_{f'}$ and thus
\begin{equation*}
1+z (j')_*(h^*(\alpha))=\lambda_z(h^* \bT_{f})= \lambda_z(\bT_{f'})
\end{equation*}

Here, by abuse of notation, $h$ also denotes the base change $\widetilde{\cX}'=\widetilde{\cX} \times_{\cY} \cY' \to \widetilde{\cX}$ and $j'$ denotes the inclusion $\tcX' \to \tcY'$. All pullbacks and pushforwards are derived. This is saying that if $\alpha$ satisfies equation~\eqref{eqn: def_alpha}, then $h^*(\alpha)$ satisfies the same Equation for $\widetilde{\cY}' \to \cY'$.
From Remark~\ref{rem:blowup-of-def-cone}, we have a commutative diagram
\[
\begin{tikzcd}[sep=.6cm]
& \widetilde{\cX}\times\mathbb{A}^1
    \arrow[rr]
    \arrow[ddrr, "\textstyle \cZ" description]
&& \cZ\times\mathbb{A}^1
    \arrow[rr]
    \arrow[dd]
&& \widetilde{\cD}
    \arrow[dd, "b_D"]
\\
\widetilde{\cX}
    \arrow[rr]
    \arrow[ddrr]
    \arrow[ur]
&& \cZ
    \arrow[rr]
    \arrow[dd]
    \arrow[ru]
&& \widetilde{\cD}_t
    \arrow[ru]
\\
&&&
\cX \times\mathbb{A}^1
    \arrow[rr]
&& D_{\cX_\bullet}\cY
\\
&&
\cX
    \arrow[rr]
    \arrow[ru]
&& (D_{\cX_\bullet}\cY)_t
    \arrow[ru]
    \arrow[from=uu, crossing over]
\end{tikzcd}
\]
where
$
\widetilde{D}_t \longrightarrow (D_{\cX_\bullet}\cY)_t
$
is isomorphic to
$
\widetilde{\cY} \longrightarrow \cY
$
for \(t\neq 0\), while
$\widetilde{\cD}_0 \longrightarrow (\cD_{\cX_\bullet}\cY)_0$ is isomorphic to the blowup of $\mathcal{N}_{\cX/\cY}
$ along $\cX$. Here $\cZ$ denotes the base change $\cX\times_{\cY} \widetilde{\cY}$.

The expression $\lambda_z(\bT_{\widetilde{\cD}_t})/\lambda_z(\bT_{({D_{\cX_\bullet}\cY})_t})$ is the pullback of the analogous expression from $\widetilde{\cD}$ which is determined by a class from $\cX$. Since $\cX$ and $\widetilde{\cD}$ do not depend on $t$, we may do the computation when $t=0$. That is, if $\alpha \in \Kring(\widetilde{\cX})[\![z]\!]$ satisfies equation~\eqref{eqn: def_alpha} for
$\widetilde{\cD}_0 \to (D_{\cX_\bullet}\cY)_0$, then the corresponding class in
\[
\Kring(\widetilde{\cX}\times\mathbb{A}^1)[\![z]\!] \simeq \Kring(\widetilde{\cX})[\![z]\!]
\]
satisfies equation~\ref{eqn: def_alpha} for
$\widetilde{\cD} \to D_{\cX_\bullet}\cY$. Restricting this class to
$\Kring(\widetilde{\cX})[\![z]\!]$ one obtain again $\alpha$ and sees that it satisfies
equation~\ref{eqn: def_alpha} for
\[
\widetilde{D}_t \simeq \widetilde{\cY} \longrightarrow (D_{\cX_\bullet}\cY)_t \simeq \cY \qquad (t \neq 0)
\]
We may thus assume that $\cY$ is a twisted weighted vector bundle over $\cX$. Let $\cX'\to\cX$ be the morphism in the proof of Proposition~\ref{prop: f upper shriek}. In particular, the restriction $\cY|_{\cX'}$ is a direct sum of vector bundles, each with a unique weight, and the induced map 
\[
\Kring(\cX) \longrightarrow \Kring(\cX')
\]
is injective. As in the proof of Proposition~\ref{prop: f upper shriek}, one reduces to treat the case in which $\cY$ is a weighted vector bundle obtained as pull back from the weighted normal vector bundle associated to the inclusion
\[
    B\Gm^d \subseteq [\mathbb{A}^d/\Gm^d] \longrightarrow B \Gm^d.
\]
Here $\Gm^d$ is a $d$-dimensional split torus acting on $\mathbb{A}^d$ component-wise with weight 1. Thus we reduce to the case $\cY=[\A^d/\Gm^d]$ and $\cX=B\Gm^d$, and non-necessarily distinct weights $\mathbf{w}$.
The blowup
\[
\widetilde{\cY} \simeq \Bigl[\widetilde{Y}/\Gm^d\Bigr]
\]
is the quotient of a toric stack $\widetilde{Y}$, equipped with toric boundary divisor
\[
D = \sum_i X_i + \tX,
\]
where each $X_i$ corresponds to the $i$-th coordinate hyperplane in $\mathbb{A}^d$, and $\tX$ denotes the reduced exceptional divisor. On $\widetilde{Y}$ we have a $\Gm^d$-equivariant exact sequence
\[
0 \longrightarrow \Omega_{\widetilde{Y}}^1
\longrightarrow \Omega_{\widetilde{Y}}^1(\log D)
\longrightarrow \Bigl( \bigoplus_{i} \sO_{X_i} \Bigr) \oplus \sO_{\widetilde{X}}
\longrightarrow 0.
\]

From the standard divisor exact sequence, we obtain
\[
\lambda_z^{\Gm^d}(\sO_{\widetilde{X}})
= \frac{1+z}{1+zt}
\qquad \text{and} \qquad
\lambda_z^{\Gm^d}(\sO_{X_i})
= \frac{1+z}{1+z\,t_i^{-1}t^{-w_i}} \in \Kring(\widetilde{\cY})[\![ z ]\!].
\]

Here, $\lambda_z^{\Gm^d}(-)$ of a $\Gm^d$-equivariant locally free sheaf on $Y$ denotes the $\lambda$-polynomial of the associated locally free sheaf on $\widetilde{\cY}$. Moreover, $t_i$ is the $i$-th character of $\Gm^d$, that is the pullback along the $i$-th projection $B\Gm^d \to B\mathbb{G}_m$ of the class of the standard one-dimensional representation of $\mathbb{G}_m$. The sheaf $\Omega_{\widetilde{Y}}^1(\log D)$ is $\Gm^d$-equivariantly trivial, hence
\[
\lambda_z^{\Gm^d}\!\bigl(\Omega_{\widetilde{Y}}^1(\log D)\bigr) = (1+z)^d.
\]
From the exact sequence above, we obtain
\[
\lambda_z^{\Gm^d}\!\bigl(\Omega_{\widetilde{Y}}^1\bigr)
= \frac{(1+zt)\prod_{i=1}^d (1+z\, t_i^{-1} t^{-w_i})}{1+z}.
\]
Dualizing and using Remark~\ref{rmk: tangent complex stacks}, we deduce
\[
\lambda_z\!\bigl(\bT_{\widetilde{\cY}}\bigr)
= \frac{(1+zt^{-1})\prod_{i=1}^d (1+z\, t_i t^{w_i})}{(1+z)}\cdot\frac{1}{(1+z)^d}.
\]

Similarly,
\[
\lambda_z\!\bigl(\bT_{\cY}\bigr)
= \prod_{i=1}^d (1+z\, t_i)\cdot\frac{1}{(1+z)^d}.
\]
Taking the quotient and using the fact that $[\widetilde{\cX}]=1-t$, we obtain the expression in the statement of the theorem.

Note that the map $j_*: \Kgroup(\widetilde{\cX}) \to \Kgroup(\widetilde{\cY}) $ in Corollary~\ref{cor: main exact sequence} comes with a minus sign.
\end{proof}

\bibliographystyle{amsalpha}
\bibliography{references}

\end{document}